\newcounter{marnote}
\newtheorem{theorem}{\textbf{Theorem}}[section]
\newtheorem{lemma}{\textbf{Lemma}}[section]
\newtheorem{proposition}{\textbf{Proposition}}[section]
\newtheorem{corollary}{\textbf{Corollary}}[section]
\newtheorem{remark}{\textbf{Remark}}[section]
\newtheorem{definition}{\textbf{Definition}}[section]
\newcommand{\diag}{\textrm{diag}}
\def\be{\begin{equation}}
\def\ee{\end{equation}}
\def\bea{\begin{eqnarray}}
\def\eea{\end{eqnarray}}
\def\bt{\begin{theorem}}
\def\et{\end{theorem}}
\def\bl{\begin{lemma}}
\def\el{\end{lemma}}
\def\br{\begin{remark}}
\def\er{\end{remark}}
\def\bp{\begin{proposition}}
\def\ep{\end{proposition}}
\def\bc{\begin{corollary}}
\def\ec{\end{corollary}}
\def\bd{\begin{definition}}
\def\ed{\end{definition}}
\def\non{\nonumber }
\DeclareMathOperator{\tr}{tr}
\newcommand{\A}{\mathcal A}
\newcommand{\F}{\mathcal F}
\newcommand{\T}{\mathbb{T}}
\newcommand{\E}{\mathcal E}
\newcommand{\LL}{\mathcal L}
\newcommand{\Id}{\mathcal I}
\newcommand\defeq{\stackrel{\scriptscriptstyle \text{def}}=}
\newcommand{\RR}{\mathbb R}
\newcommand{\Rr}{\mathcal{R}}
\newcommand{\R}{\mathbb{R}}
\newcommand{\N}{\mathbb N}
\newcommand{\G}{\mathcal{G}}
\newcommand{\sS}{\mathcal{S}}
\newcommand{\prt}{\ensuremath{\partial}}
\newcommand{\mcA}{\mathcal{A}}
\newcommand{\mcB}{\mathcal{B}}
\newcommand{\mcC}{\mathcal{C}}
\newcommand{\veps}{\varepsilon}
\newcommand{\bE}{\overline{\mathcal{E}}}
\newcommand{\bRr}{\overline{\mathcal{R}}}
\newcommand{\bG}{\overline{\mathcal{G}}}
\newcommand{\wA}{\widetilde{\mathcal{A}}}
\newcommand{\wB}{\widetilde{\mathcal{B}}}
\newcommand{\wC}{\widetilde{\mathcal{C}}}
\newcommand{\Rx}{R^\xi}
\newcommand{\vx}{v^\xi}
\newcommand{\Ro}{R_0}
\newcommand{\dQ}{Q^{(\delta)}}
\newcommand{\dR}{R^{(\delta)}}
\newcommand{\delt}{\prt_t}
\newcommand{\eps}{\varepsilon}
\newcommand{\TT}{\mathbb{T}^2}
\newcommand{\dv}{\nabla\cdot}
\begin{document}

\title{Dynamics and flow effects in the Beris-Edwards system\\ modeling nematic liquid crystals}

\author{
  Hao Wu\footnote{School of Mathematical Sciences and Shanghai Key Laboratory for Contemporary Applied Mathematics, Fudan University,
Han Dan Road 220, Shanghai 200433, China.
  \texttt{haowufd@fudan.edu.cn, haowufd@yahoo.com}}
  \and
  Xiang Xu\footnote{
  Department of Mathematics and Statistics,
  Old Dominion University, Norfolk, VA 23529, USA.
  \texttt{x2xu@odu.edu}}
  \and
  Arghir Zarnescu\footnote{IKERBASQUE, Basque Foundation for Science, Maria Diaz de Haro 3,
48013, Bilbao, Bizkaia, Spain}\,  \footnote{BCAM,  Basque  Center  for  Applied  Mathematics,  Mazarredo  14,  E48009  Bilbao,  Bizkaia,  Spain
\texttt{ azarnescu@bcamath.org}}\, \footnote{``Simion Stoilow" Institute of the Romanian Academy, 21 Calea Grivi\c{t}ei, 010702 Bucharest, Romania}}

\date{}
\maketitle

\begin{abstract}
We consider the Beris-Edwards system modeling incompressible liquid crystal flows of nematic type. This couples a Navier-Stokes system for the fluid velocity with a parabolic reaction-convection-diffusion equation for the $Q$-tensors describing the direction of liquid crystal molecules.
In this paper, we study the effect that the flow has on the dynamics of the $Q$-tensors, by considering two fundamental aspects: the preservation of eigenvalue-range   and the dynamical emergence of defects in the limit of high Ericksen number.
\end{abstract}

\section{Introduction}
\setcounter{equation}{0}

In this paper we  consider the Beris-Ewdards system modelling nematic liquid crystals \cite{BE94}. It is one of the main PDE systems modeling nematic liquid crystals in the $Q$-tensor framework and one of the best studied mathematically \cite{ADL14,ADL15,PZ11,PZ12,GR14, GR15,W12}. It couples a Navier-Stokes system for incompressible flow with anisotropic forces and a parabolic reaction-convection-diffusion system for matrix-valued functions, i.e., the $Q$-tensors.
The Navier-Stokes system captures the fluid motion and the reaction-convection-diffusion system describes the evolution of the liquid crystal director field (see Section~\ref{sec:physical} for physical aspects).

In the Landau-de
Gennes theory  of liquid crystals (see \cite{dG93,M10,NMreview}),  the local orientation and degree of order
for neighboring liquid crystal molecules are represented by a
symmetric, traceless matrix that is an element of the {\it $Q$-tensor space}:
\be\label{def:Qtensors}
  \sS_0^{(3)}\defeq\big\{Q\in\mathbb{M}^{3\times 3}| \, Q^{ij}=Q^{ji}, \ \forall\, 1\leq i, j\leq 3, \  \tr(Q)=0
  \big\}.
\ee
The physical interpretation of the space of $Q$-tensors is presented for instance in \cite{NMreview}.

The system we are focusing on involves a number of physical constants. For the purpose of our study it is convenient to set most of these constants equal to one, keeping just a couple of specific interest to us. A specific non-dimensionalisation will be provided in Section~\ref{section:nondimensionalisation}.
In the rest of the paper, we will focus on the Beris-Ewdards system written in a non-dimensional form as:
\begin{align}
&\delt \textbf{u}+ (\textbf{u}\cdot\nabla)\textbf{u}+\nabla P\nonumber\\
&\quad =\veps\Delta  \textbf{u}-\veps^2\nabla\cdot(\nabla Q\odot \nabla Q+(\Delta Q)  Q -Q\Delta Q)\non\\
&\qquad - \veps^2\xi\nabla\cdot\bigg(\Delta  Q \big( Q+\frac{1}{3}\Id\big)+\big(Q+\frac{1}{3}\Id\big)\Delta{Q}-2\big( Q+\frac{1}{3}\Id\big)( Q:\Delta Q) \bigg)\non\\
&\qquad -2\veps\xi\kappa\nabla\cdot\bigg(\big(Q+\frac{1}{3}\Id\big)\big(a\tr(Q^2)-b\tr(Q^3)+c\tr^2(Q^2)\big)\bigg)\non\\
&\qquad +2\veps\xi\kappa\nabla\cdot\bigg(\big(Q+\frac{1}{3}\Id\big)\big(a Q-b \big[Q^2-\frac{1}{3}\tr(Q^2)\Id\big]+c Q\tr(Q^2)\big)\bigg),\label{nondim:u1}\\
&\nabla\cdot \textbf{u}=0,\label{nondim:du}\\
&\delt Q+ \textbf{u}\cdot\nabla Q-S(\nabla \textbf{u}, Q)=\veps \Delta Q-\left( aQ-b\big[Q^2-\frac{1}{3}\tr(Q^2)\Id\big]+ cQ\tr(Q^2)\right),
\label{nondim:Q1}
\end{align}
where
\begin{align}
&S(\nabla \textbf{u}, Q) = (\xi D+ \Omega)\big( Q+\frac13 \Id
\big)+\big(Q+\frac13 \Id \big)(\xi
D- \Omega) -2\xi\big(Q+\frac13 \Id \big)\tr(Q\nabla \textbf{u}), \quad \text{for some} \ \xi\in \mathbb{R}.\non
\end{align}
Here, we denote by $\TT=[-\pi,\pi]^2$ the two dimensional torus. Then $\textbf{u}(x, t): \TT\times (0, +\infty) \rightarrow \R^3$
represents the fluid velocity field and $Q(x, t):
\TT\times (0, +\infty) \rightarrow \sS_0^{(3)}$ stands for the liquid crystal directors, with $D=\dfrac{\nabla\mathbf{u}+\nabla^T\mathbf{u}}{2}$,
$\Omega=\dfrac{\nabla\mathbf{u}-\nabla^T\mathbf{u}}{2}$ being the
symmetric and skew-symmetric parts of the rate of strain tensor,
respectively. The constant $\xi$ in $S(\nabla \textbf{u}, Q)$ is a measure of the ratio between the tumbling and the aligning effects
that a shear flow exerts on the liquid crystal director field.

There has been a vast recent literature on the study of
well-posedness as well as long-time dynamics of the Beris-Edward
system \eqref{nondim:u1}--\eqref{nondim:Q1}.
We refer interested readers to \cite{ADL15, D15, DFRSS14,
GR14, GR15, PZ12} for the discussion related to the simplified system with $\xi=0$,
and \cite{ADL14, CRWX15, DAZ, PZ11} for the full system with a non-vanishing $\xi$.

The system \eqref{nondim:u1}--\eqref{nondim:Q1} contains a
significant number of terms and generates considerable analytical
difficulties, mostly due to the presence of the Navier-Stokes part that describes the effects of the fluid.
Often in the physical
literature it is assumed that the fluid can be neglected and one
works with some suitably simplified fluid-free versions of the system.
However, it is not a priori clear what is lost through these simplifications and
the main aim of this work is to study two interlinked fundamental
issues:

\begin{itemize}
\item The preservation of eigenvalue-range,
\item The partial decoupling in the limit of high Ericksen number.
\end{itemize}

We aim to provide an understanding of these qualitative issues. Thus, in this paper we will focus on the simplest possible setting, from a technical point of view, that allows to bypass regularity issues (in particular, for the Navier-Stokes system). Namely, we will work on the two dimensional torus $\TT$ and with sufficiently smooth solutions, i.e., the global strong solutions.
The existence of global strong solutions in this setting for the full system \eqref{nondim:u1}--\eqref{nondim:Q1} was done in \cite{CRWX15}, however assuming that the range of $Q$-tensors is the set of two dimensional tensors, i.e. $2\times 2$ matrices.
The framework we use here is slightly different such that we work essentially in a three dimensional setting for the target spaces (cf. \cite[Remark 1.1]{PZ11}). More precisely, we consider the fluid velocity $\mathbf{u}$ and the $Q$-tensor defined on $\TT$ but take values in the spaces $\mathbb{R}^3$ and $\sS_0^{(3)}$ (see \eqref{def:Qtensors}), respectively.
This setting can be easily obtained from the work in \cite{CRWX15} by assuming that we work in three dimensional space with initial data independent of one spatial direction, so essentially a two dimensional datum (in the domain). This property of the initial data of being two dimensional is preserved by the flow and all the arguments in \cite{CRWX15} follow, because the only thing that matters for obtaining global strong solutions is the Sobolev embedding theorem that continues to hold as the domain is kept to be essentially two dimensional.
Hence, we shall simply assume the existence of global strong solutions defined as follows:

\begin{definition}
Consider $(\mathbf{u}_0,Q_0)\in H^1(\TT;\R^3)\times H^2(\TT;\sS_0^{(3)})$  with $\dv \mathbf{u}_0=0$.
A pair $(\mathbf{u},Q)$ is called a global strong solution of the Beris-Edward system \eqref{nondim:u1}--\eqref{nondim:Q1} with initial data $(\mathbf{u}_0,Q_0)$,
if
\begin{equation}
\begin{aligned}
&\mathbf{u}\in C([0,\infty);H^1(\TT;\R^3))\cap L^2_{loc}(0,\infty; H^2(\TT;\R^3)),\\
&Q\in C([0,\infty);H^2(\TT;\sS^{(3)}_0))\cap L^2_{loc}(0,\infty; H^3(\TT;\sS^{(3)}_0)),
\end{aligned}\non
\end{equation}
and $(\mathbf{u},Q)$ satisfies the system \eqref{nondim:u1}--\eqref{nondim:Q1} a.e. in $\TT\times[0,\infty)$.
\end{definition}

The first issue of interest to us, i.e., the preservation
of eigenvalue-range, concerns the behaviour of eigenvalues
of $Q$-tensors under the fluid dynamics. The main question is to
understand what will happen as time evolves if the eigenvalues of the
initial datum $Q_0(x)$ are in a convex set (with $x\in\TT$ in the
current stage): will they stay in the same set or not? As explained
in the next section this is a fundamental issue motivated by the
physical interpretation of the $Q$-tensors.

 It is already known (see for instance, \cite[Theorem 3]{GR15}) that
 if one takes $\xi=0$ in the system \eqref{nondim:u1}--\eqref{nondim:Q1}, then a maximum principle
is valid for the $Q$-equation, i.e., the $L^\infty$-norm of the initial datum $Q_0$ will be preserved for the solution
$Q(x, t)$ during the evolution. However, as pointed out in \cite{IXZ14} the preservation of eigenvalue-range is a much more subtle issue than the preservation
of $L^\infty$-norm of $Q_0$.

On the other hand, if the fluid is neglected, i.e., $\mathbf{u}=0$, then the $Q$-equation \eqref{nondim:Q1} is simply
reduced to a gradient flow of the free energy $\F(Q)$ (see \eqref{free energy} for its definition). It is proved
in \cite{IXZ14} (in the whole space setting) that this gradient flow will preserve the convex
hull of eigenvalues of $Q_0$ for any regular solution $Q(x,t)$. The proof therein is based on an operator splitting idea
and a nonlinear Trotter product formula \cite{T97}. More precisely,
the gradient flow is ``splitted" into a heat flow and an ODE system so that
the initial eigenvalue constraints are shown to be preserved by both
sub-flows. Then the Trotter formula performs the
combination. Returning to our full system \eqref{nondim:u1}--\eqref{nondim:Q1}, we note that  the
structure of the current $Q$-equation is nevertheless much more
complicated such that the argument for the heat flow part in
\cite{IXZ14} can no longer be applied. To overcome the corresponding mathematical
difficulty, we shall introduce a singular potential discussed in
\cite{BM10} (see \cite{EKT16, FRSZ14, FRSZ15, W12} for various
applications), and make a thorough exploration of its special
properties.

To this end, we will first investigate the special case $\xi=0$.
In the literature this
is often referred to as the {\it ``co-rotational case"}. Taking
$\xi=0$ generates considerably simpler equations and in particular,
ensures the validity of a maximum principle for $Q$. In this special case, we will
show that in fact one even has the preservation of a suitable convex
eigenvalue-range and thus extending the
results in \cite{IXZ14} to the current case with flow. As mentioned in \cite{IXZ14},
the preservation of $L^\infty$-norm of $Q$ (i.e., the maximum principle) can be viewed as a
``weaker" version of the preservation of physicality (i.e., the preservation of eigenvalues).
\begin{theorem}\label{main theorem} {\bf [Eigenvalue preservation in the co-rotational case $\xi=0$].}
Let $T>0$, $\xi=0$, $\kappa>0$, $a\in \mathbb{R}$, $b>0$, $c>0$ and
$(\mathbf{u}_0,Q_0)\in H^1(\TT;\R^3)\times H^2(\TT;\sS_0^{(3)})$
with $\dv \mathbf{u}_0=0$. We assume that the eigenvalues of the initial datum $Q_0$
satisfy
\begin{equation}\label{initial setting}
\mathrm{\lambda}_i(Q_0(x))\in\left[-\dfrac{b+\sqrt{b^2-24ac}}{12c},\ \ \dfrac{b+\sqrt{b^2-24ac}}{6c}\right],
\qquad \forall\, x\in\TT,\  1\leq i\leq 3.
\end{equation}
Furthermore, we impose the following restriction on the coefficients:
\begin{equation}\label{restriction:a}
|a|<\frac{b^2}{3c}.
\end{equation}
Let $(\mathbf{u}, Q)$ be the global strong solution
to the system  \eqref{nondim:u1}--\eqref{nondim:Q1} on $[0,T]$ with initial data $(\mathbf{u}_0, Q_0)$.
Then for any $t\in [0,T]$ and $x\in\TT$, the eigenvalues of
$Q(x,t)$ stay in the same interval as in \eqref{initial setting}.
\end{theorem}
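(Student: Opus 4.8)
The plan is to run an energy estimate for the $Q$-equation alone, using a convex \emph{singular spectral} functional as test function; this is made possible because $\xi=0$ removes from \eqref{nondim:Q1} every flow-gradient term except a rotation. Indeed, for $\xi=0$ one has $S(\nabla\mathbf{u},Q)=\Omega Q-Q\Omega=:[\Omega,Q]$, so \eqref{nondim:Q1} becomes
\[
\delt Q+\mathbf{u}\cdot\nabla Q-[\Omega,Q]=\veps\Delta Q-\mathcal{H}(Q),\qquad \mathcal{H}(Q):=aQ-b\big[Q^2-\tfrac13\tr(Q^2)\Id\big]+cQ\,\tr(Q^2),
\]
with $\mathbf{u}$ and $\Omega$ entering only as (sufficiently regular) coefficients, so the Navier--Stokes block plays no further role. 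Write $s_+:=\tfrac{b+\sqrt{b^2-24ac}}{6c}>0$, $s_-:=-\tfrac{s_+}{2}$, and let $K:=\{Q\in\sS_0^{(3)}:\lambda_i(Q)\in[s_-,s_+]\text{ for all }i\}$ be the target region. Since $\sum_i\lambda_i(Q)=0$, the bound $\lambda_{\min}(Q)\ge s_-$ already forces $\lambda_{\max}(Q)=-\lambda_{\min}(Q)-\lambda_{\mathrm{mid}}(Q)\le-2\lambda_{\min}(Q)\le s_+$; hence $K=\{Q:\lambda_{\min}(Q)\ge s_-\}$, which is closed and convex (a superlevel set of the concave map $Q\mapsto\lambda_{\min}(Q)$), and $Q_0(x)\in K$ by \eqref{initial setting}.

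For small $\delta>0$ I would choose, in the spirit of \cite{BM10}, a convex function $\psi_\delta$ that is smooth and finite on a slightly enlarged interval $(s_--\eta_\delta,s_++\eta_\delta)$ (with $\eta_\delta\downarrow0$), agrees on $[s_-,s_+]$ with a fixed convex profile, and blows up to $+\infty$ at the endpoints $s_\mp\mp\eta_\delta$ fast enough that --- together with the continuity of $Q$ --- finiteness of $\int_\TT\tr\,\psi_\delta(Q)$ forces the spectrum of $Q$ into $(s_--\eta_\delta,s_++\eta_\delta)$. Set $\Phi_\delta(Q):=\tr\,\psi_\delta(Q)=\sum_i\psi_\delta(\lambda_i(Q))$, a convex function of symmetric matrices whose gradient is $\psi_\delta'(Q)$ (functional calculus) projected onto $\sS_0^{(3)}$; note $\psi_\delta'(Q)$ commutes with $Q$. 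Testing the $Q$-equation against $\psi_\delta'(Q)$ and integrating over $\TT$, I would evaluate four terms: (i) the transport term $=\int_\TT\mathbf{u}\cdot\nabla\Phi_\delta(Q)=-\int_\TT(\dv\mathbf{u})\Phi_\delta(Q)=0$; (ii) the rotation term vanishes pointwise, $\tr\big(\psi_\delta'(Q)[\Omega,Q]\big)=\tr\big([Q,\psi_\delta'(Q)]\,\Omega\big)=0$; (iii) the diffusion term $=-\veps\int_\TT\sum_k D^2\Phi_\delta(Q)[\prt_kQ,\prt_kQ]\le0$ by convexity of $\Phi_\delta$ (its Hessian form is $\sum_{i,j}\tfrac{\psi_\delta'(\lambda_i)-\psi_\delta'(\lambda_j)}{\lambda_i-\lambda_j}|(\prt_kQ)_{ij}|^2\ge0$ in the eigenbasis of $Q$); (iv) since $\tr\mathcal{H}(Q)=0$ and $\mathcal{H}(Q)$ has eigenvalues $g(\lambda_i(Q),\tr(Q^2))$ with $g(\mu,m):=a\mu-b\mu^2+\tfrac b3m+c\mu m$, the reaction term $=\int_\TT\sum_i\psi_\delta'(\lambda_i(Q))\,g(\lambda_i(Q),\tr(Q^2))\,dx$. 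Altogether $\frac{d}{dt}\int_\TT\Phi_\delta(Q)\le-\int_\TT\sum_i\psi_\delta'(\lambda_i(Q))\,g(\lambda_i(Q),\tr(Q^2))\,dx$.

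The crux is to bound this right-hand side above, uniformly on $[0,T]$, with the contributions from eigenvalues near the boundary of the range being nonpositive; elsewhere $\psi_\delta'$ is bounded. Since $K=\{\lambda_{\min}\ge s_-\}$, the only dangerous regime is $\lambda_{\min}(Q)\to s_-$, where $\psi_\delta'(\lambda_{\min})\to-\infty$ (and in the rigid corner configuration $(s_+,s_-,s_-)$ also $\psi_\delta'(\lambda_{\max})\to+\infty$), so it suffices to show $g(s_-,\tr(Q^2))\le0$ whenever $\lambda_{\min}(Q)=s_-$ \emph{and the other two eigenvalues lie in $[s_-,s_+]$} (and, near the corner, a similar check using that $g$ vanishes there to appropriate order). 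When $\lambda_{\min}=s_-$ the other eigenvalues satisfy $\lambda_1+\lambda_2=s_+/2$ with $\lambda_1,\lambda_2\in[s_-,s_+]$, so $\tr(Q^2)=\tfrac{s_+^2}{2}-2\lambda_1\lambda_2$ with $\lambda_1\lambda_2\in[-\tfrac{s_+^2}{2},\tfrac{s_+^2}{16}]$; substituting, and using the identity $3cs_+^2-bs_++2a=0$ satisfied by $s_+$, the quantity $g(s_-,\tr(Q^2))$ becomes affine in $\lambda_1\lambda_2$, equal to $0$ at $\lambda_1\lambda_2=-\tfrac{s_+^2}{2}$ and to $-\tfrac{3s_+}{16}(2a+bs_+)$ at $\lambda_1\lambda_2=\tfrac{s_+^2}{16}$. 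An affine function is $\le0$ on a segment iff it is $\le0$ at both endpoints, so $g(s_-,\tr(Q^2))\le0$ throughout iff $2a+bs_+\ge0$, and a short manipulation with the formula for $s_+$ shows this holds precisely when $a\ge-\tfrac{b^2}{3c}$ --- hence, with room to spare (strict away from the corner), under the hypothesis $|a|<\tfrac{b^2}{3c}$ of \eqref{restriction:a}. Therefore the singular part of the reaction term is nonpositive and the remainder is bounded, giving $\tfrac{d}{dt}\int_\TT\Phi_\delta(Q)\le C_\delta$.

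Integrating, $\int_\TT\Phi_\delta(Q(t))\,dx\le\int_\TT\Phi_\delta(Q_0)\,dx+C_\delta T<\infty$ for $t\in[0,T]$ (finite because $\lambda_i(Q_0)\in[s_-,s_+]$, where $\psi_\delta$ is finite); since $Q(\cdot,t)\in C(\TT)$ (by $H^2(\TT)\hookrightarrow C(\TT)$ in two dimensions) and $\psi_\delta$ is singular at $s_\mp\mp\eta_\delta$, this forces $\lambda_i(Q(x,t))\in(s_--\eta_\delta,s_++\eta_\delta)$ for every $x\in\TT$ --- so the hypothesis used in the previous paragraph is valid and the whole scheme closes along a standard continuation argument starting from the open inclusion at $t=0$. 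Letting $\delta\to0$ (so $\eta_\delta\to0$) yields $\lambda_i(Q(x,t))\in[s_-,s_+]$ for all $t\in[0,T]$, $x\in\TT$, as claimed; equivalently one may add the singular force $-\delta\psi_\delta'(Q)$ to the right-hand side of the $Q$-equation, solve the regularized coupled system (the extra term only improving the estimate above), and pass to the limit. The single hard step is the sign analysis of the reaction term and the recognition that \eqref{restriction:a} is exactly the condition making it work: this is the PDE counterpart of the step in \cite{IXZ14} showing the ODE $\dot Q=-\mathcal{H}(Q)$ keeps eigenvalues in $[s_-,s_+]$, now realised through the derivative of a singular convex potential rather than a Trotter product formula, because the heat/transport/rotation block can no longer be decoupled from the Navier--Stokes equations; constructing $\psi_\delta$ with all the listed properties, justifying the energy identity and integrations by parts in the available regularity class, and running the continuation/limit passages is routine but requires care.
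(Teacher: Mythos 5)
Your route is genuinely different from the paper's. The paper never tests the full equation \eqref{nondim:Q1} against a singular potential: it splits the $\xi=0$ equation into the reaction ODE \eqref{S equ}, whose invariance of the eigenvalue set is imported from \cite{IXZ14}, and the \emph{linear} advection--rotation--diffusion equation \eqref{R equ}, for which the Ball--Majumdar potential plus a parabolic maximum principle give invariance with a purely dissipative right-hand side; the two sub-flows are recombined by a nonautonomous Trotter product formula (Appendix A), and Proposition \ref{lemma:smoothflow} lowers the regularity needed of $\mathbf{u}$. You instead run one energy estimate on the full equation, so the Trotter machinery and the separate ODE lemma disappear. Everything you actually carry out is correct: the reduction to $K=\{\lambda_{\min}\ge s_-\}$, the vanishing of the transport and rotation terms, the sign of the diffusion term via the divided-difference Hessian, and the boundary computation --- with your $s_+=\frac{b+\sqrt{b^2-24ac}}{6c}$ one has $3cs_+^2-bs_++2a=0$, $g(s_-,\tr(Q^2))$ is affine in $\lambda_1\lambda_2$, vanishes at the corner $(s_+,s_-,s_-)$, equals $-\frac{3s_+}{16}(2a+bs_+)$ at the other endpoint, and $2a+bs_+\ge0$ is equivalent to $a\ge-\frac{b^2}{3c}$. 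This is exactly the subtangentiality condition underlying the ODE invariance in \cite{IXZ14}.

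The gap is at the corner, and it is the crux rather than a routine detail. Your potential is singular at $s_-\mp\eta_\delta$, $s_+\pm\eta_\delta$, so for fixed $\delta$ the solution is only confined to the enlarged interval, and what must be bounded is $-\psi_\delta'(\lambda)\,g(\lambda,\tr(Q^2))$ for eigenvalues $\lambda$ near $s_--\eta_\delta$ with the \emph{other} eigenvalues only in $(s_--\eta_\delta,s_++\eta_\delta)$ --- not the configuration you analyse. The exact-boundary sign does not transfer by continuity, because $g$ vanishes at the corner while $\partial_{\tr(Q^2)}g(s_-,\tfrac32 s_+^2)=\tfrac b3-\tfrac{cs_+}{2}=\tfrac{3b-\sqrt{b^2-24ac}}{12}>0$ under \eqref{restriction:a}: for the admissible spectrum $(s_-,\,s_++\tau,\,s_--\tau)$ with $0<\tau<\eta_\delta$ one gets $\tr(Q^2)=\tfrac32 s_+^2+3s_+\tau+2\tau^2$ and hence $g(s_-,\tr(Q^2))>0$, so the statement ``it suffices to show $g(s_-,\cdot)\le0$'' is false on the set where the evolution actually lives. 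What saves the scheme is quantitative: $\psi_\delta'(\lambda)$ is only unbounded for $\lambda=s_--\sigma$ with $\sigma$ comparable to $\eta_\delta$, and there the worst case $\tau\le\eta_\delta$ gives $g\le 3s_+\eta_\delta\bigl(\tfrac b3-\tfrac{cs_+}{2}\bigr)-\tfrac{3bs_+}{2}\sigma\le-\eta_\delta s_+\bigl(\tfrac b2+\tfrac{3cs_+}{2}\bigr)<0$; a matching expansion is needed at the upper singularity, where the trace constraint forces $\tr(Q^2)-\tfrac32 s_+^2\approx 3s_+\tau$ and $g(s_++\tau,\cdot)\approx\tfrac{s_+\sqrt{b^2-24ac}}{2}\,\tau\ge0$. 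These two-parameter expansions, together with a matching choice of where $\psi_\delta'$ becomes large, are exactly what your parenthetical ``a similar check \ldots to appropriate order'' omits; without them $\frac{d}{dt}\int_\TT\Phi_\delta(Q)\le C_\delta$ is unproven. If you want to keep a direct, splitting-free argument, replacing $\Phi_\delta$ by $\int_\TT\mathrm{dist}(Q,K)^2\,dx$ avoids the enlargement altogether: monotonicity of the gradient of $\mathrm{dist}(\cdot,K)^2$ handles the diffusion term, isotropy of $K$ kills the rotation term, and the subtangentiality you verified plus local Lipschitz continuity of the reaction field close a Gronwall inequality with no corner analysis.
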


 We infer from Theorem \ref{main theorem} that for our system
\eqref{nondim:u1}--\eqref{nondim:Q1} the fluid velocity field $\mathbf{u}$ will \emph{not} affect
the initial eigenvalue constraint on $Q$ as time evolves, provided that $\xi=0$.

On the other hand, generally one cannot prove any maximum principle of
$|Q|$ for the full system \eqref{nondim:u1}--\eqref{nondim:Q1} unless $\xi=0$.
 Next, we will show that for the general case with $\xi\not=0$, the preservation of eigenvalues indeed does not hold as well.
 For this purpose, we will argue by contradiction and use a simplified version of the full system \eqref{nondim:u1}--\eqref{nondim:Q1}. This will be obtained as the so-called high Ericksen number limit, which corresponds to the formal limit $\veps\to 0$ in our setting.
 The resulting system we obtain is only weakly coupled, namely, it is a system coupling an Euler equation for the fluid velocity with a reaction-convection equation for the order parameter $Q$.
 First, we prove

\begin{theorem}\label{thm:highEricksen}
{\bf [High Ericksen number limit in the co-rotational case].}
Let $\mathbf{u}_0\in H^{5}(\TT; \mathbb{R}^3)$  with $\dv \mathbf{u}_0=0$ and $Q_0\in H^{4}(\TT;\sS_0^{(3)})$.
Consider the {\it ``limit  system"}:

\begin{align}
&\delt \mathbf{v}+ \mathbf{v}\cdot\nabla\mathbf{v}+\nabla q=0,\label{nondim:v10}\\
&\nabla\cdot \mathbf{v}=0,\label{nondim:dv}\\
&\delt R+\mathbf{v}\cdot\nabla R-\Omega_{\mathbf{v}}R+R\Omega_{\mathbf{v}}= -aR+b\big[R^2-\frac{1}{3}\tr(R^2)\Id\big]- cR\tr(R^2),
\label{nondimn:R10}
\end{align}
 with $\Omega_\mathbf{v}:=\displaystyle{\frac{\nabla \mathbf{v}-\nabla^T \mathbf{v}}{2}}$.
Then for all $\eps\in(0,1)$ and any strong solution  $(\mathbf{u}^\eps,Q^\eps)$ starting from initial data $(\mathbf{u}_0,Q_0)$ of the system \eqref{nondim:u1}--\eqref{nondim:Q1} with $\xi=0$,
if we denote $\mathbf{w}^\eps=\mathbf{u}^\eps-\mathbf{v}$,   $S^\eps=Q^\eps-R$, then for any time $T>0$ we have:
\begin{align}
&\|\mathbf{w}^\eps\|_{L^2}^2+\veps^2\|\nabla S^\eps\|_{L^2}^2 +\veps\|S^\eps\|_{L^2}^2 \leq C_T \veps^2, \quad \forall\, t\in [0,T],\non\\
& \int_0^T\left(\veps \|\nabla \mathbf{w}^\eps\|_{L^2}^2 +\veps^3\|\Delta S^\eps\|_{L^2}^2+ \veps^2\|\nabla S^\eps\|_{L^2}^2\right)dt\leq C_T \veps^2.\non
\end{align}
\end{theorem}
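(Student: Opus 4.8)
The plan is to derive the stated bounds from a single Gronwall inequality for a suitably weighted energy of the difference. Set $\mathbf{w}^\eps=\mathbf{u}^\eps-\mathbf{v}$, $S^\eps=Q^\eps-R$, and note that since both pairs start from the same datum one has $\mathbf{w}^\eps(0)=0$, $S^\eps(0)=0$. Introduce
\[
\mathcal Y(t):=\tfrac12\|\mathbf{w}^\eps(t)\|_{L^2}^2+\tfrac{\veps^2}{2}\|\nabla S^\eps(t)\|_{L^2}^2+\tfrac{\veps}{2}\|S^\eps(t)\|_{L^2}^2 .
\]
The objective is to establish, for some $c_0>0$ and some $C_T$ depending only on $T$,
\[
\frac{d}{dt}\mathcal Y+c_0\big(\veps\|\nabla\mathbf{w}^\eps\|_{L^2}^2+\veps^3\|\Delta S^\eps\|_{L^2}^2+\veps^2\|\nabla S^\eps\|_{L^2}^2\big)\le C_T\big(\mathcal Y+\veps^2\big),
\]
after which Gronwall's inequality together with $\mathcal Y(0)=0$ yields exactly the two asserted estimates (with $C_Te^{C_TT}$ in place of $C_T$). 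The constant $C_T$ will be allowed to depend on high Sobolev norms of $(\mathbf{v},R)$ and on an $L^\infty$ bound for $Q^\eps$, but never on $\veps$; the case of $\veps$ bounded away from $0$ being trivial from the a priori energy bounds, there is no loss in assuming $\veps$ small.

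Two preliminary facts feed in. First, since $\mathbf{u}_0\in H^5(\TT;\R^3)$ is divergence-free, system \eqref{nondim:v10}--\eqref{nondimn:R10} is classically globally well-posed: $(v^1,v^2)$ solves two-dimensional Euler and $v^3$ is passively transported, so $\mathbf{v}\in C([0,T];H^5(\TT;\R^3))$; and $R\in C([0,T];H^4(\TT;\sS_0^{(3)}))$ because the rotational terms $-\Omega_{\mathbf{v}}R+R\Omega_{\mathbf{v}}$ are $|R|$-neutral while the cubic reaction in \eqref{nondimn:R10} is dissipative at infinity, so $\|R\|_{L^\infty}+\|\mathbf{v}\|_{H^5}+\|R\|_{H^4}\le C_T$ for every $T$. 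Second, in the co-rotational case $Q:S(\nabla\mathbf{u},Q)=Q:(\Omega Q-Q\Omega)=0$, so $\tfrac12|Q^\eps|^2$ satisfies a scalar convection--diffusion inequality with reaction $-a|Q^\eps|^2+b\,\tr((Q^\eps)^3)-c|Q^\eps|^4$, which points inward for $|Q^\eps|$ large; the maximum principle (valid precisely because $\xi=0$) then gives $\|Q^\eps\|_{L^\infty(\TT\times[0,T])}\le C$ uniformly in $\eps$, hence $\|S^\eps\|_{L^\infty(\TT\times[0,T])}\le C_T$. Subtracting \eqref{nondim:v10}--\eqref{nondimn:R10} from \eqref{nondim:u1}--\eqref{nondim:Q1} (with $\xi=0$) produces transport-type equations for $(\mathbf{w}^\eps,S^\eps)$; the genuinely \emph{new} forcing in them is exactly what was dropped in the limit --- $\veps\Delta\mathbf{v}$, $\veps\Delta R$, and the part of the elastic stress $\veps^2\nabla\cdot(\nabla Q\odot\nabla Q+(\Delta Q)Q-Q\Delta Q)$ with $Q$ replaced by $R$ --- all smooth, contributing at most $C_T\veps^2$ to the energy balance after the usual integrations by parts.

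The estimate itself comes from testing the $\mathbf{w}^\eps$-equation with $\mathbf{w}^\eps$ and the $S^\eps$-equation with $-\veps^2\Delta S^\eps+\veps S^\eps$; the viscosity and the $\veps\Delta S^\eps$ term produce the dissipation displayed above. The delicate point is the elastic stress $\veps^2\nabla\cdot(\nabla Q^\eps\odot\nabla Q^\eps+(\Delta Q^\eps)Q^\eps-Q^\eps\Delta Q^\eps)$ tested against $\mathbf{w}^\eps$. After integration by parts and writing $Q^\eps=R+S^\eps$, I would exploit the co-rotational cancellations that also underlie the energy law of the full system: the cubic piece $\veps^2\int\nabla S^\eps\odot\nabla S^\eps:\nabla\mathbf{w}^\eps$ is killed by the contribution of the nonlinear transport $\veps^2\int(\mathbf{u}^\eps\cdot\nabla S^\eps):\Delta S^\eps$ from the $S^\eps$-equation (both collapse, by $\dv\mathbf{u}^\eps=0$ and the symmetry of $\partial_iS^\eps:\partial_jS^\eps$ in $i,j$, to $\mp\veps^2\int\partial_iS^\eps:\partial_jS^\eps\,\partial_iw^\eps_j$), while the antisymmetric stress $(\Delta Q^\eps)Q^\eps-Q^\eps\Delta Q^\eps$ --- which pairs only with $\Omega_{\mathbf{w}^\eps}$ --- combines, via the trace identity $(\Omega Q-Q\Omega):\Delta Q=((\Delta Q)Q-Q\Delta Q):\Omega$, with the co-rotational difference $(\Omega_{\mathbf{u}^\eps}Q^\eps-Q^\eps\Omega_{\mathbf{u}^\eps})-(\Omega_{\mathbf{v}}R-R\Omega_{\mathbf{v}})$ tested with $-\veps^2\Delta S^\eps$ so that all terms carrying $\Delta S^\eps$ against a factor $Q^\eps=R+S^\eps$ cancel identically. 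Everything that survives then carries at least one smooth factor ($\mathbf{v}$, $R$, or a derivative of these) or is quadratic in $S^\eps$ times the $L^\infty$-bounded $Q^\eps$; such terms are controlled by $C_T\mathcal Y+C_T\veps^2$ plus a small fraction of the dissipation, using two-dimensional Ladyzhenskaya/Gagliardo--Nirenberg inequalities, integration by parts (so that at most one derivative ever falls on $S^\eps$, except in places already weighted by $\veps^3$), the skew identity $\int(\Omega A-A\Omega):A=0$ for symmetric $A$, and a careful matching of the powers of $\veps$ between $\mathcal Y$, the dissipation, and the forcing.

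I expect this last bookkeeping to be the main obstacle. The structural constraint is that a term which would force absorption into two different dissipation quantities at once cannot be split with two small parameters, so one must rely on the cancellations above to remove \emph{precisely} the terms that are linear in $\nabla\mathbf{w}^\eps$ \emph{and} linear in $\Delta S^\eps$ with an $O(1)$ coefficient; everything else can be arranged, because a derivative landing on $\mathbf{v}$ or $R$ is harmless and any term of the form $\veps^2\int(\text{error at two derivatives on }S^\eps):(\text{smooth})$ may be integrated by parts down to $\veps^2\|\nabla S^\eps\|_{L^2}^2\le2\mathcal Y$. Should a pointwise-in-time bound such as $\|\nabla S^\eps\|_{L^2}\le C$ be needed to license some of these steps, one runs the whole computation inside a continuation argument on the maximal interval on which $\mathcal Y\le C_T\veps^2$; this interval is nonempty since $\mathcal Y(0)=0$, and the differential inequality improves the bound and so forces the interval to be all of $[0,T]$. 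Integrating the differential inequality and using $\mathcal Y(0)=0$ then gives both displayed estimates.
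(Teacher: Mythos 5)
Your proposal is correct and follows essentially the same route as the paper: the same weighted energy $\tfrac12\|\mathbf{w}^\eps\|_{L^2}^2+\tfrac{\veps^2}{2}\|\nabla S^\eps\|_{L^2}^2+\tfrac{\veps}{2}\|S^\eps\|_{L^2}^2$, the same multipliers ($\mathbf{w}^\eps$ for the velocity equation and $-\veps^2\Delta S^\eps+\veps S^\eps$ for the tensor equation), the same key cancellations between the elastic stress tested against $\mathbf{w}^\eps$ and the transport/rotation terms tested against $-\veps^2\Delta S^\eps$, and the same Gronwall closure using the $H^5\times H^4$ regularity of $(\mathbf{v},R)$ and the $\eps$-uniform $L^\infty$ bound on $Q^\eps$ from the co-rotational maximum principle. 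The continuation argument you keep in reserve turns out to be unnecessary, since after the cancellations every surviving term is controlled by a priori bounded quantities times the energy.
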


  We note that the assumption $\xi=0$ in the above theorem can be removed, provided that one has an a priori $L^\infty$-bound of $|Q|$ that is also uniform in $\veps>0$ . On the other hand, if we assume that the property of eigenvalue preservation holds, then we actually have such an a priori $L^\infty$-bound. As a consequence, we can obtain the limit system \eqref{nondim:v10}--\eqref{nondimn:R10} in the non co-rotational case i.e.,  $\xi\neq 0$, for which we can show that in general one cannot except the preservation of the initial eigenvalue range. This is the overall basic strategy leading to our next result:

\begin{theorem}\label{thm:lackofeigenv}{\bf [Lack of eigenvalue preservation in the non co-rotational case].}

There exist some $\xi\not=0$, $\veps>0$, $\kappa>0$, $a$, $b$, $c\in \R$ satisfying $b>0$, $c>0$, $|a|<\displaystyle{\frac{b^2}{3c}}$, some initial data  $\mathbf{u}_0\in H^{5}(\TT;\R^3)$ with $\dv \mathbf{u}_0=0$ and  $Q_0\in H^{4}(\TT;\sS^{(3)}_0)$ that satisfies \eqref{initial setting}, such that for some $(x,t)\in \TT \times \mathbb{R}^+$
$$
\mathrm{\lambda}_i(Q(x,t))\notin\left[-\dfrac{b+\sqrt{b^2-24ac}}{12c},\dfrac{b+\sqrt{b^2-24ac}}{6c}\right],
$$
where $(\mathbf{u},Q)$ is the global strong solution to the full system  \eqref{nondim:u1}--\eqref{nondim:Q1} with initial data $(\mathbf{u}_0, Q_0)$ and parameters $\eps$ and $\xi$.
\end{theorem}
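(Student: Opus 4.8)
The plan is to argue by contradiction at the level of the limit system \eqref{nondim:v10}--\eqref{nondimn:R10} and then transfer the conclusion to the full system via Theorem~\ref{thm:highEricksen}. Suppose, for contradiction, that the eigenvalue range in \eqref{initial setting} were preserved by the full system \eqref{nondim:u1}--\eqref{nondim:Q1} for \emph{all} admissible parameters, in particular for all $\xi\neq 0$ and all sufficiently small $\veps>0$. Then $|Q^\eps|$ would be bounded in $L^\infty(\TT\times[0,T])$ uniformly in $\veps$, which (as noted after Theorem~\ref{thm:highEricksen}) is precisely the a priori bound needed to remove the hypothesis $\xi=0$ there. Hence the conclusion of Theorem~\ref{thm:highEricksen} would hold in the non co-rotational case as well: $Q^\eps\to R$ in the appropriate topology, where $R$ solves the reaction-convection equation \eqref{nondimn:R10} with the $S$-term replaced by its $\xi\neq 0$ analogue, coupled to the Euler flow \eqref{nondim:v10}--\eqref{nondim:dv}. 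In particular $R(x,t)$ would also have to satisfy the eigenvalue constraint \eqref{initial setting} for all $(x,t)$ (being a limit of tensors satisfying it, by closedness of the constraint set).

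The crux is then to exhibit an explicit counterexample at the level of the decoupled limit system: a divergence-free $\mathbf{u}_0\in H^5(\TT;\R^3)$ generating a short-time smooth Euler flow $\mathbf{v}$, together with $Q_0\in H^4(\TT;\sS_0^{(3)})$ satisfying \eqref{initial setting}, such that the solution $R$ of the $\xi\neq 0$ version of \eqref{nondimn:R10} leaves the eigenvalue interval at some $(x_*,t_*)$. The key simplification is that \eqref{nondimn:R10}, being first order in $R$ with no diffusion, can be analyzed along particle trajectories: fixing a Lagrangian particle $X(t)$ with $\dot X=\mathbf{v}(X,t)$, $X(0)=x_*$, the value $R(X(t),t)$ solves an ODE in $\sS_0^{(3)}$ driven by the (prescribed, smooth) matrix $\nabla\mathbf{v}(X(t),t)$ through the full $\xi\neq 0$ stretching term $S(\nabla\mathbf{v},R)$, plus the reaction term. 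It therefore suffices to choose $\mathbf{v}$ so that near $t=0$ one has, at $x_*$, a rate-of-strain tensor $D$ that is essentially arbitrary (any symmetric matrix is realizable as $D(x_*,0)$ for a suitable $\mathbf{u}_0$, e.g. built from a single Fourier mode or a few modes), pick $R(x_*,0)$ to be a boundary point of the admissible spectrum, and then compute $\frac{d}{dt}\big|_{t=0}\lambda_{\max}(R(X(t),t))$ (or $\lambda_{\min}$) using the first-order perturbation formula $\dot\lambda = n^T\dot R\, n$ for the associated unit eigenvector $n$. Because the $\xi$-dependent term in $S(\nabla\mathbf{u},Q)$ contains the symmetric contributions $\xi\big(D(Q+\tfrac13\Id)+(Q+\tfrac13\Id)D\big)-2\xi(Q+\tfrac13\Id)\tr(QD)$, which do \emph{not} vanish when contracted against $n$ — unlike the antisymmetric part $\Omega Q - Q\Omega$, whose contribution to $\dot\lambda$ cancels — one can arrange $D$ so that this term pushes $\lambda$ strictly outside the interval, overwhelming the reaction term (which, under \eqref{restriction:a}, is inward-pointing or at worst $O(1)$ and can be dominated by taking $\|D\|$, i.e. $\|\nabla\mathbf{u}_0\|$, large).

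Concretely, I would (i) fix $b,c>0$ and $a$ with $|a|<b^2/(3c)$, and let $s_+ = \tfrac{b+\sqrt{b^2-24ac}}{6c}$ be the upper endpoint; (ii) choose $Q_0$ spatially constant near $x_*$ and equal to a uniaxial tensor with largest eigenvalue exactly $s_+$ (so $Q_0$ sits on the boundary of the admissible set), smoothly cut off to be a genuine $H^4(\TT)$ function elsewhere while keeping all eigenvalues inside \eqref{initial setting}; (iii) choose $\mathbf{u}_0$ supported in Fourier modes so that $D(x_*,0)$ equals a chosen symmetric traceless matrix $D_*$ scaled by a large parameter $M$, and verify $\mathbf{u}_0\in H^5$ with $\dv\mathbf{u}_0=0$; (iv) compute $n^T S(\nabla\mathbf{v},R)\, n\big|_{(x_*,0)}$ for $n$ the top eigenvector of $Q_0(x_*)$ and show that, for a suitable direction of $D_*$ and large $M$, this quantity is strictly positive, hence $\frac{d}{dt}\lambda_{\max}\big(R(X(t),t)\big)\big|_{t=0}>0$; (v) conclude that $\lambda_{\max}(R)>s_+$ for small $t>0$, contradicting that $R$ respects \eqref{initial setting}, and therefore contradicting the assumed eigenvalue preservation for the full system. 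The main obstacle is step (iv): one must check that the symmetric $\xi$-terms genuinely contribute a term of order $M$ to $\dot\lambda_{\max}$ of a definite sign — this requires choosing the eigenframe of $Q_0(x_*)$ and the tensor $D_*$ compatibly (for instance taking $D_*$ to have its own largest eigenvalue aligned with $n$, so that $n^T(D_*(Q_0+\tfrac13\Id) + (Q_0+\tfrac13\Id)D_*)n = 2(s_+ + \tfrac13)\, n^T D_* n$ dominates, while the trace term $-2(s_+industry+\tfrac13)\tr(Q_0 D_*)$ is made smaller in magnitude by choosing $D_*$ nearly orthogonal to $Q_0$ in the remaining directions) and handling the fact that short-time existence of the smooth Euler solution $\mathbf{v}$ with the required regularity (which follows from $\mathbf{u}_0\in H^5$ by standard local well-posedness) is only local in time — but since the eigenvalue escape happens instantaneously at $t=0^+$, a short existence interval suffices.
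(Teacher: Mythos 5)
Your overall architecture coincides with the paper's for the first half: assume preservation for all admissible parameters, deduce the uniform $L^\infty$ bound on $Q^\eps$, invoke the non co-rotational high Ericksen limit (the paper formalizes this as Proposition~\ref{thm:highEricksen2}), and conclude that the limit tensor $R$ solving \eqref{nondimn:R10x} must inherit the eigenvalue constraint. Where you genuinely diverge is in how the contradiction is extracted from the limit system. The paper performs a \emph{second} rescaling, $\mathbf{v}^\xi(x,t)=\mathbf{v}(x,t/\xi)$, $R^\xi(x,t)=R(x,t/\xi)$, and lets $\xi\to\infty$ so that only the symmetric stretching terms survive; starting from $R_0|_{t=0}=0$ the limiting ODE produces a nonzero $R_0(t_0)$ whenever $D_{\mathbf{u}_0}\not\equiv0$, and the contradiction is then manufactured by shrinking the admissible interval through the coefficient scaling $a=\lambda a_0$, $b=\sqrt{\lambda}b_0$. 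Your route instead fixes $\xi$ and $(a,b,c)$, places $Q_0(x_*)$ on the boundary of the admissible set, and shows the top eigenvalue exits instantaneously by computing $n^T S(\nabla\mathbf{v},R)n$ at $t=0$. This is more direct (no second scaling limit, no coefficient tuning) and it does work, but one sub-step of your sketch is imprecise: for a uniaxial $Q_0=s\big(\mathbf{n}\otimes\mathbf{n}-\frac13\Id\big)$ you cannot make $\tr(Q_0D_*)$ small while keeping $n^TD_*n$ large, since $\tr(Q_0D_*)=s\,n^TD_*n$ exactly (using $\tr D_*=0$). The correct computation gives
\begin{equation}
n^T S(\nabla\mathbf{v},Q_0)\,n=2\xi\Big(\lambda+\tfrac13\Big)(1-s)\,n^TD_*n,\qquad \lambda=\tfrac{2s}{3},\nonumber
\end{equation}
so the $\xi$-contribution is of order $M$ with controllable sign provided $s\neq1$ (and $s\neq-\frac12$), a genericity condition on $(a,b,c)$ you must impose; with that fix your step (iv) goes through and the reaction term is indeed dominated for large $M$. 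The paper's $\xi\to\infty$ device avoids this non-degeneracy check (and any pointwise eigenvalue perturbation argument) at the cost of being less explicit; your version buys an explicit instantaneous mechanism at a single point but requires the eigenframe/strain compatibility to be verified as above. One further small caveat: realizable strain tensors in this quasi-2D setting satisfy $D_{33}=0$ in addition to $\tr D=0$, but this does not obstruct choosing $n$ and $D_*$ with $n^TD_*n\neq0$.
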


Finally, in Section~\ref{sec:defectsdyn} we return to the system obtained in the limit of high Ericksen number with $\xi=0$, i.e., \eqref{nondim:v10-}--\eqref{nondimn:R10-}. As argued in the physical motivation in Section~\ref{sec:physical}, this is relevant for understanding certain patterns, specific to liquid crystals, the {\it ``defect patterns"} (see
\cite{dG93}). We consider certain specific solutions of the Euler equations and provide examples of flows that are non-singular but are still capable of generating various types of such defect patterns. More precisely, we present two distinct way of generating defects: the phase mismatch in absence of the flow and these vorticity-driven defects.

The rest of this paper is organized as follows. In the next section
we provide a couple of background details concerning the  physical
relevance of our study. This is a section that the
mathematically-minded readers can safely skip. Then in
Section~\ref{sec:eigenpres}   we provide the proof of
Theorem~\ref{main theorem} concerning the preservation of
eigenvalue-range in the co-rotational case with $\xi=0$.
In Section~\ref{sec:highEricksen}, we obtain
the limit of high Ericksen number and prove the error estimates, i.e.,   Theorem~\ref{thm:highEricksen}. Then in
Section~\ref{sec:lacknoncorot},
we can use the reduced system to argue for the initial system that for
$\xi\not=0$ we cannot always have eigenvalue preservation, thus
proving Theorem~\ref{thm:lackofeigenv}.
Finally, in Section~\ref{sec:defectsdyn} we return to the reduced system with $\xi=0$ and
study its implications concerning the dynamical appearance of defect
patterns. In the Appendix, we recall some rather technical results
that are necessary in Sections~\ref{sec:eigenpres}
and~\ref{sec:defectsdyn} but that seem difficult to pinpoint in the literature.

\subsection*{Notational Conventions}
Throughout the paper, we assume the Einstein summation convention
over repeated indices. We define $A: B \defeq \tr(A^T B)$ for matrices $A,
B\in\mathbb{M}^{3\times 3}$, and denote by $|Q|$ the Frobenius norm
of the matrix $Q\in\mathbb{M}^{3\times 3}$ such that $|Q|=\sqrt{Q:Q}$.
Besides,  the $3\times 3$ identity matrix will be denoted by $\Id$.
The partial derivative with
respect to spatial variable $x_k$ of the $(i,j)$-component of $Q$ is denoted by $\partial_k
Q^{ij}$ or $Q_{ij,k}$. Next, we define the matrix valued $L^p$ space ($1 \leq p \leq \infty$) by
\begin{equation*}
  L^p(\TT; \sS_0^{(3)} ) \defeq \Big\{Q:\TT\rightarrow\sS_0^{(3)},\ |Q|\in L^p(\TT;\RR)\Big\}.
\end{equation*}
For the sake of simplicity, unless explicitly pointed out, we shall
simply use $L^p(\TT)$ to express $L^p(\TT; \R^{3})$ for the fluid velocity and $L^p(\TT; \sS_0^{(3)})$ for the $Q$-tensors. In a similar manner, we use $H^{k}(\TT)$ ($k\in\mathbb{N}$) to represent the Sobolev spaces $H^k(\TT; \R^{3})$ for the fluid velocity and $H^k(\TT; \sS_0^{(3)})$
for $Q$-tensors, respectively.
We will at times abbreviate
$\|\cdot\|_{L^p(\TT)}$ as $\|\cdot\|_{L^p}$,
$\|\cdot\|_{H^k(\TT)}$ as $\|\cdot\|_{H^k}$ etc.

\section{Physical aspects}
\setcounter{equation}{0}
\label{sec:physical}
\subsection{Eigenvalue-range constraints}

The main characteristic of nematic liquid crystals is the locally preferred
orientation of the nematic molecule directors. This can be  described  by the
$Q$-tensors, that are suitably normalized second order moments of the probability
distribution function of the molecules. More precisely, if $\mu_x$ is a probability
measure on the unit sphere $\mathbb{S}^2$, representing the orientation of liquid crystal  molecules at a point
$x$ in space, then a $Q$-tensor denoted by ${Q}(x)$ is a symmetric and traceless $3\times 3$ matrix defined as
\begin{equation}\nonumber
  Q(x)=\int_{\mathbb{S}^2}\left(\mathbf{p}\otimes  \mathbf{p}-\frac{1}{3}\Id\right)\,d\mu_x(\mathbf{p}).
\end{equation}
It is supposed to be a  crude measure of how the probability measure $\mu_x$ deviates
from the isotropic measure $\bar\mu$ where $d\bar\mu=\frac{1}{4\pi}dA$, see \cite{NMreview}.
The fact that $\mu_x$ is a probability measure imposes a constraint on the eigenvalues $\{\lambda_i(Q)\}$ of $Q$ such that (see \cite{NMreview})
$$ -\frac{1}{3}\leq \lambda_i(Q)\leq  \frac{2}{3},\quad \sum_{i=1}^3\lambda_i(Q) =0.$$
Thus, not any symmetric and traceless $3\times 3$ matrix is a {\it physical} $Q$-tensor
but only those whose eigenvalues  take values  in $(-\frac{1}{3},\frac{2}{3})$.

\subsection{Non-dimensionalisation and the Ericksen number}
\label{section:nondimensionalisation}
The free energy of liquid crystal molecules is given by
\begin{equation}\label{free energy}
\F(Q)\defeq
\int_{\TT}\frac{L}{2}|\nabla{Q}|^2\underbrace{+\frac{\tilde a}{2}\tr(Q^2)-\frac{\tilde b}{3}\mathrm{tr}(Q^3)+\frac{\tilde c}{4}\tr^2(Q^2)}_{\defeq f_B(Q)}\,dx,
\end{equation}
where $L$, $\tilde a$,  $\tilde b$,  $\tilde c \in \mathbb{R}$ are material dependent and temperature-dependent constants that satisfy \cite{M10, MZ10}
\begin{equation}\label{AssumptionsBulk}
L>0, \  \tilde b\geq 0, \  \tilde c>0.
\end{equation}
Here, we use the one constant approximation of the Oseen-Frank
energy for the sake of simplicity (see \cite{BM10}).

The  Beris-Edwards system we are going to consider is given by
\begin{align}
&\rho(\mathbf{u}_t+\mathbf{u}\cdot\nabla\mathbf{u})-\nu\Delta\mathbf{u}+\nabla{P}=\nabla\cdot(\tau+\sigma),\label{NSE-0}\\
&\nabla\cdot\mathbf{u}=0,\label{incomp-0}\\
&Q_t+\mathbf{u}\cdot\nabla{Q}-S(\nabla\mathbf{u}, Q)=\Gamma
H(Q).\label{Q equ-0}
\end{align}
Here $\mathbf{u}(x, t): \TT\times (0, +\infty) \rightarrow \R^3$
represents the fluid velocity field and $Q(x, t):
\TT\times (0, +\infty) \rightarrow \sS_0^{(3)}$ stands for the
$Q$-tensor of liquid crystal molecules.
The positive constants
$\nu$ and $\Gamma$ denote the fluid viscosity and the
macroscopic elastic relaxation time for the molecular orientation
field, respectively.

In the $Q$-equation \eqref{Q equ-0}, the tensor $H=H(Q)$ is
defined to be the minus variational derivative of the free energy $\F(Q)$
with respect to $Q$ under symmetry and tracelessness constraints:
\begin{align}
H(Q)&\defeq -\dfrac{\partial \F(Q)}{\partial Q}=L\Delta{Q}-\frac{\partial f_B}{\partial Q}
\nonumber\\
&=L\Delta Q-\tilde aQ+\tilde b\Big(Q^2-\frac13\tr(Q^2)\Id\Big)-\tilde cQ\tr(Q^2),\label{H}
\end{align}
meanwhile the term $S(\nabla\mathbf{u}, Q)$ in \eqref{Q equ-0} reads as follows
\begin{equation}\label{S1}
S(\nabla\mathbf{u}, Q)\defeq (\xi D+\Omega)\big(Q+\frac13 \Id
\big)+\big(Q+\frac13 \Id \big)(\xi
D-\Omega)-2\xi\big(Q+\frac13 \Id \big)\tr(Q\nabla\mathbf{u}),
\end{equation}
where $D=\dfrac{\nabla\mathbf{u}+\nabla^T\mathbf{u}}{2}$ and
$\Omega=\dfrac{\nabla\mathbf{u}-\nabla^T\mathbf{u}}{2}$ stand for the stretch and the vorticity tensor, respectively. It is worth pointing out that the constant $\xi$ in \eqref{S1} is a measure of the ratio between the tumbling and the aligning effects
that a shear flow exerts on the liquid crystal director field.

Next, the anisotropic forcing terms in the Navier-Stokes system \eqref{NSE-0} are
elastic stresses caused by the presence of liquid crystal molecules,
which include the symmetric part
\begin{equation}
\tau\defeq-\xi\big(Q+\frac{1}{3}\Id\big)H-\xi{H}\big(Q+\frac13\Id\big)+2\xi\big(Q+\frac13\Id\big)\tr(QH)
-L\nabla{Q}\odot\nabla{Q},
\end{equation}
and the skew-symmetric part
\begin{equation}
\sigma\defeq QH-HQ.
\end{equation}

Thus, the coupled system \eqref{NSE-0}--\eqref{Q equ-0} takes the following explicit form:
\begin{align}
&\rho(\mathbf{u}_t+\mathbf{u}\cdot\nabla\mathbf{u})-\nu\Delta\mathbf{u}+\nabla{P}\non\\
&\quad = L\nabla\cdot(Q\Delta{Q}-(\Delta{Q})Q)-L\nabla\cdot(\nabla{Q}\odot\nabla{Q})\non\\
&\qquad -\xi L\nabla\cdot\bigg(\Delta Q(Q+\frac{1}{3}\Id)+(Q+\frac{1}{3}\Id)\Delta Q-2(Q+\frac{1}{3}\Id)(Q:\Delta Q) \bigg)\non\\
&\qquad -2\xi\nabla\cdot\bigg(\big(Q+\frac{1}{3}\Id\big)(Q:\frac{\partial f_B}{\partial Q}-\frac{\partial f_B}{\partial Q})\bigg)
\label{NSE}\\
&\nabla\cdot\mathbf{u}=0,
\label{incomp}\\
&Q_t+\mathbf{u}\cdot\nabla{Q}=(\xi D+\Omega)\big(Q+\frac13 \Id
\big)+\big(Q+\frac13 \Id \big)(\xi
D-\Omega)-2\xi\big(Q+\frac13 \Id \big)\tr(Q\nabla\mathbf{u})\non\\
&\qquad \qquad \qquad \ \ +\Gamma\left(L\Delta{Q}- \tilde aQ+ \tilde b\Big[Q^2-\frac{1}{3}\tr(Q^2)\Id\Big]-\tilde cQ\tr(Q^2)\right).
\label{Q-equ}
\end{align}
Besides, we assume the system \eqref{NSE}--\eqref{Q-equ} is subject to the initial conditions
\begin{equation}\label{IC-BC}
\mathbf{u}(x, 0)=\mathbf{u}_0(x) \;\mbox{with} \;
\nabla\cdot\mathbf{u}_0=0, \quad Q(x, 0)=Q_0(x)\in \sS_0^{(3)}, \quad \forall\, x\in \TT.
\end{equation}

The full system \eqref{NSE}--\eqref{IC-BC} involves several dimensional parameters (see \cite{Virgaelectric, MkadGart} and the references therein):

 \begin{itemize}
 \item $\rho$ is the density (assumed to be constant here), expressed in units of $Kg\cdot m^{-3}$.
 \item $L$ is an elastic constant, measuring spatial distortions of the order parameter $Q$, expressed in units of $Kg\cdot m\cdot s^{-2}$,
 \item  $\tilde a$, $\tilde b$ and $\tilde c$ are material-dependent parameters (at a fixed temperature), expressed in units of $J\cdot m^{-1}=Kg\cdot s^{-2}\cdot m^{-1}$,
 \item $\nu$ is the fluid viscosity, expressed in units of  $ Kg\cdot m^{-1}\cdot s^{-1}$,
 \item $\Gamma$ is the inverse of a viscosity coefficient, expressed in units of $m\cdot s\cdot Kg^{-1}$.
 \end{itemize}
We remark that the parameter $\xi$ is a non-dimensional constant measuring a characteristic of the molecules.

Following the non-dimensionalisation performed in \cite{LiuCarme}, we denote the characteristic length, velocity and density by $\hat l$, $\hat u$, $\hat\rho$, respectively. Then we define the non-dimensional distance $\bar x$ and time $\bar t$ as:
$$
x=\hat l\bar x,\quad  t=\frac{\hat l}{\hat u}\bar t.
$$
 Take the non-dimensional variables $\bar \rho$, $\bar{\mathbf{u}}$, $\bar Q$ and $\bar P$ to be such that:
$$ \rho=\hat{\rho} \bar \rho,\quad  \mathbf{u}(x,t)=\hat u\bar{\mathbf{u}}(\bar x,\bar t),\ \ Q(x,t)=\bar Q(\bar x,\bar t), \ \ P(x,t)=\hat\rho \hat u^2\bar P(\bar x,\bar t).$$
Inserting the above relations into the system \eqref{NSE}--\eqref{Q-equ}, we obtain
\begin{align}
&\bar \rho(\partial_{\bar t} \bar{\mathbf{u}}+\bar{\mathbf{u}}\cdot\nabla_{\bar x} \bar{\mathbf{u}})+\nabla_{\bar x} \bar P \nonumber\\
&\quad =\frac{\nu}{\hat\rho\hat l\hat u} \Delta_{\bar x} \bar{\mathbf{u}}
-\frac{L}{\hat\rho \hat l^2\hat u^2}\nabla_{\bar x}\cdot[\nabla_{\bar x}\bar Q\odot \nabla_{\bar x}\bar Q+(\Delta_{\bar x}\bar Q) \bar Q-\bar Q\Delta_{\bar x}\bar Q]\non\\
&\qquad
- \xi \frac{L}{\hat\rho \hat l^2\hat u^2}\nabla_{\bar x}\cdot\bigg(\Delta_{\bar x} \bar Q(\bar Q+\frac{1}{3}\Id)+(\bar Q+\frac{1}{3}\Id)\Delta_{\bar x} \bar Q-2(\bar Q+\frac{1}{3}\Id)(\bar Q:\Delta_{\bar x} \bar Q) \bigg)\non\\
&\qquad
-2\xi\nabla_{\bar x}\cdot\bigg(\big(\bar Q+\frac{1}{3}\Id\big)\big(\frac{\tilde a}{\hat \rho\hat u^2}\mathrm{tr}(\bar Q^2)-\frac{\tilde b}{\hat \rho\hat u^2}\textrm{tr}(\bar Q^3)+\frac{\tilde c}{\hat \rho\hat u^2}\textrm{tr}^2(\bar Q^2)\big)\bigg)\non\\
&\qquad
+ 2\xi\nabla_{\bar x}\cdot\bigg(\big(\bar Q+\frac{1}{3}\Id\big)\big(\frac{\tilde a}{\hat \rho\hat u^2}\bar Q - \frac{\tilde  b}{\hat \rho\hat u^2}(\bar Q^2-\frac{1}{3}\mathrm{tr} (\bar Q^2)\Id)+\frac{\tilde  c}{\hat \rho\hat u^2}\bar Q\mathrm{tr}(\bar Q^2)\big)\bigg),
\label{system:non dim-}\\
&\nabla_{\bar x}\cdot\bar{\mathbf{u}}=0,\\
&\partial_{\bar t} \bar Q +\bar{\mathbf{u}}\cdot\nabla_{\bar x} \bar Q
=(\xi \bar D+\bar \Omega)\big(\bar Q+\frac13 \Id
\big)+\big(\bar Q+\frac13 \Id \big)(\xi
\bar D-\bar \Omega) -2\xi\big(\bar Q+\frac13 \Id \big)\tr(\bar Q\nabla_{\bar x}\bar{\mathbf{u}})\non\\
&\qquad \qquad \qquad \ \ \ +\frac{\Gamma \hat l}{\hat u}\left(\frac{L}{\hat l^2}\Delta_{\bar x} \bar  Q-\tilde a\bar Q
         +\tilde b\big(\bar Q^2-\frac{1}{3}\mathrm{tr}(\bar Q^2)\Id \big)
         - \tilde c\bar Q\mathrm{tr}(\bar Q)^2\right),
\label{system:non dim-2}
\end{align}
where $\bar D=\dfrac{\nabla_{\bar x}\bar{\mathbf{u}}+\nabla_{\bar x}^T\bar{\mathbf{u}}}{2}$ and
$\bar \Omega=\dfrac{\nabla_{\bar x}\bar{\mathbf{u}}-\nabla_{\bar x}^T\bar{\mathbf{u}}}{2}$.

On the other hand, we recall that the non-dimensional Ericksen and Reynolds numbers can be defined as follows (see \cite{LiuCarme}):

\be
\label{def:E}
\mathcal{E}:=\frac{\nu \hat l \hat u}{L},
\ee
\be
\label{def:R}
\mathcal{R}:=\frac{\hat \rho \hat l \hat u}{\nu}.
\ee
Furthermore, we introduce the following non-dimensional quantities:
\begin{equation}
\begin{aligned}
&\wA:=\frac{1}{\hat \rho\hat u^2}\tilde a,\quad \wB:=\frac{1}{\hat \rho\hat u^2}\tilde b,\quad\wC:=\frac{1}{\hat \rho\hat u^2}\tilde c,\non\\
&\mathcal{A}:=\frac{\Gamma \hat l}{\hat u}\tilde a,\quad \mathcal{B}:=\frac{\Gamma \hat l}{\hat u}\tilde b,\quad \mathcal{C}:=\frac{\Gamma \hat l}{\hat u}\tilde c,\non\\
&\mathcal{G}:=\frac{\Gamma L}{\hat u\hat l},
\end{aligned}
\end{equation}
which implies that the system \eqref{system:non dim-}--\eqref{system:non dim-2} becomes, in a non-dimensional form:
\begin{align}
& \bar \rho(\partial_{\bar t} \bar{\mathbf{u}}+\bar{\mathbf{u}}\cdot\nabla_{\bar x}\bar{\mathbf{u}})+\nabla_{\bar x}\bar P\non\\
&\quad =\frac{1}{\Rr}\Delta_{\bar x} \bar{\mathbf{u}}-\frac{1}{\Rr\E}\nabla_{\bar x}\cdot[\nabla_{\bar x}\bar Q\odot \nabla_{\bar x}\bar Q+(\Delta_{\bar x}\bar Q) \bar Q-\bar Q\Delta_{\bar x}\bar Q]\non\\
&\qquad - \frac{\xi}{\Rr\E}\nabla_{\bar x}\cdot\bigg(\Delta_{\bar x} \bar Q(\bar Q+\frac{1}{3}\Id)+( \bar Q+\frac{1}{3}\Id)\Delta_{\bar x} \bar Q - 2(\bar Q+\frac{1}{3}\Id)(\bar Q:\Delta_{\bar x} \bar Q) \bigg)\non\\
&\qquad -2\xi\nabla_{\bar x}\cdot\bigg(\big(\bar Q+\frac{1}{3}\Id\big)\big(\wA\textrm{tr}(\bar Q^2)-\wB\textrm{tr}(\bar Q^3)+\wC\textrm{tr}^2(\bar Q^2)\big)\bigg)\non\\
&\qquad +2\xi\nabla_{\bar x}\cdot\bigg(\big(\bar Q+\frac{1}{3}\Id\big)\big(\wA \bar Q-\wB (\bar Q^2-\frac{1}{3}\mathrm{tr}(\bar Q^2)\Id)+ \wC \bar Q\mathrm{tr}(\bar Q^2)\big)\bigg),
\label{system:non dim1}\\
&\nabla_{\bar x}\cdot\bar{\mathbf{u}}=0,\\
&\partial_{\bar t} \bar Q+\bar{\mathbf{u}}\cdot\nabla_{\bar x} \bar Q
=(\xi\bar D+\bar \Omega)\big(\bar Q+\frac13 \Id
\big)+\big(\bar Q+\frac13 \Id \big)(\xi
\bar D-\bar \Omega)-2\xi\big(\bar Q+\frac13 \Id \big)\tr(\bar Q\nabla_{\bar x}\bar{\mathbf{u}})\non\\
&\qquad \qquad \qquad \qquad
+\G \Delta_{\bar x} \bar  Q-\mcA\bar Q+\mcB\big(\bar Q^2-\frac{1}{3}\mathrm{tr}(\bar Q^2)\Id\big)-\mcC\bar Q\mathrm{tr}(\bar Q^2).
\end{align}
\begin{remark}
Taking into account the definitions \eqref{def:E}, \eqref{def:R} for the Ericksen and Reynolds numbers we see that the physical parameters $\rho$, $\nu$ are material-dependent, hence fixed, and we can only vary $\hat l$ or $\hat u$. We observe that $\hat l$ is essentially a unit of the length measurement, which can be changed for instance from $10^{-6}$ meters to $10^6$ meters. Hence, formally increasing $\hat l$ amounts to obtaining a system relevant on a much larger scale. Similarly, fixing $\hat l$ and changing $\hat u$ amounts to a change of scale in time. The most important thing to notice is that a change of scale implies a simultaneous change in all the non-dimensional parameters.
\end{remark}

In order to study defect patterns, which appear as localized high gradients, it is useful to take a {\it large space scale $\hat l\to\infty$} for fixed time scale $\hat t$ (which also implies that $\hat u\to\infty$ and $\displaystyle{\frac{\hat l}{\hat u}}$ is fixed).  This corresponds to taking {\it large Ericksen and Reynolds numbers, simultaneously}:
\begin{equation}
\left\{
\begin{aligned}
&\E=\frac{\bE}{\veps},\ \Rr=\frac{\bRr}{\veps},\\
& \wA=\veps\wA_0,\  \wB=\veps\wB_0, \ \wC=\veps\wC_0,\\
& \mcA=\mcA_0,\ \mcB=\mcB_0,\ \mcC=\mcC_0,\\
& \G=\veps\bG,
\end{aligned}
\right.
\label{rel:limit2}
\end{equation}
where $\varepsilon>0$ is a parameter that will be sent to $0$.

In the remaining part of the paper, we focus on the scaling \eqref{rel:limit2} and for notational simplicity, we set
$$
\bE=\bRr=\bG=1,\quad a:=\mcA_0,\ b:=\mcB_0,\ c:=\mcC_0,\quad \kappa:=\frac{\wA_0}{\mcA_0}=\frac{\wB_0}{\mcB_0}=\frac{\wC_0}{\mcC_0}.
$$
Dropping the bars of unknown variables and the spatial/temporal variables, setting $\bar \rho=1$ without loss of generality, the system we are going to study becomes the one in the introduction, namely \eqref{nondim:u1}--\eqref{nondim:Q1}.

\section{Eigenvalue-range preservation in the co-rotational case}
\label{sec:eigenpres}
\setcounter{equation}{0}

In this section, we aim to prove Theorem~\ref{main theorem}.
To this end, we
consider the system \eqref{nondim:u1}--\eqref{nondim:Q1} with
$\xi=0$. Assume that $(\mathbf{u}, Q)$ is a strong solution to this
system, existing on some time interval $[0,T]$. Below we will
investigate the $Q$-equation \eqref{nondim:Q1} only, regarding the
velocity $\mathbf{u}$ (and thus $\Omega=\dfrac{\nabla \mathbf{u}-\nabla^T\mathbf{u}}{2}$) in the convection terms as
given and sufficiently smooth functions. In Subsection~\ref{subsec:proofofweakflow}, we will show that this smoothness assumption can be relaxed in certain sense.
Then the proof of Theorem \ref{main theorem}
will be obtained in the last subsection of this section, by
using a nonlinear Trotter product formula whose mechanism is
explained in an abstract setting in the Appendix, Section~\ref{sec:Trotter}  (see \cite{T97} for the autonomous case).

\subsection{Eigenvalue-range preservation for decomposed systems}

To begin with, we denote by $S(t, \bar{S})$, or
$S(t, \cdot)\bar{S}\in \sS_0^{(3)}$ the flow generated by the ODE
part of \eqref{nondim:Q1}, i.e., $S(t,\bar S)$ satisfies the ODE system
\begin{equation}\label{S equ}
 \left\{
 \begin{aligned}
 &\frac{d}{dt} S=-  aS+  b\left(S^2-\dfrac{1}{3}\tr(S^2)\Id\right)- c \tr(S^2)S,\\
 &S|_{t=0}=\bar{S}\in \sS_0^{(3)}.
\end{aligned}
\right.
\end{equation}
Consider the convex
set (see for instance \cite{BV04} for a proof of its convexity)
\begin{equation}
\label{convex hull}
\mathcal{C}:=\left\{Q\in \sS_0^{(3)}\Big|\; -\frac{b+\sqrt{b^2-24ac}}{12c}\leq
\lambda_i(Q)\leq \frac{b+\sqrt{b^2-24ac}}{6c},\ \  i=1,2,3\right\}.
\end{equation}
It has been shown in \cite{IXZ14} that the solution $S(t, \bar{S})$
to \eqref{S equ} always lies in this set $\mathcal{C}$ provided that
its initial datum $\bar{S}$ lies in \eqref{convex hull} (for details, see Step 2
of the proof of \cite[Proposition 2.2]{IXZ14}).

On the other hand, for $\bar R\in H^2(\TT;\sS_0^{(3)})$, we denote $R(t; s, \bar R)= \mathcal{V}(t, s) \bar R$ the unique solution to the linear non-autonomous problem
\begin{equation}
\left\{
\begin{aligned}
&\partial_t R- \eps\Delta{R} =-\mathbf{u}\cdot\nabla{R}+\Omega{R}-R\Omega,\\
&R|_{t=s}=\bar R,
\end{aligned}
\right.
\label{R equ}
\end{equation}
where $\mathbf{u}$ is a given divergence-free function in $L^\infty (0,T;W^{1,\infty}(\TT;\R^3))$ and $\Omega=\dfrac{\nabla \mathbf{u}-\nabla^T\mathbf{u}}{2}$.
Next, we prove that
the two parameter evolution system $\mathcal{V}(t, s)$ defined by problem
\eqref{R equ} also preserves the above closed convex hull of the range for
the initial data.

Denote
$$
  m=\frac{b+\sqrt{b^2-24ac}}{12c}.$$
Similar to the singular potential of Ball--Majumdar type considered in \cite{BM10},
for every fixed (sufficiently small) $\delta>0$, we take
\begin{equation}
\label{singular potential}
\psi_\delta(Q)=\displaystyle\inf_{\rho\in\A_{Q,\delta}}\int_{\mathbb{S}^2}\rho(\mathbf{p})\ln{\rho(\mathbf{p})}d\mathbf{p},
\end{equation}
where the admissible set $\A_{Q, \delta}$ is given by
$$
  \A_{Q,\delta}=\left\{\rho:\mathbb{S}^2\rightarrow\mathbb{R},\, \rho\geq 0,\,
  \int_{\mathbb{S}^2}\rho(\mathbf{p})d\mathbf{p}=1;\; Q=3(1+\delta)m\int_{\mathbb{S}^2}\Big(\mathbf{p}\otimes{\mathbf{p}}-\frac13\Id\Big)\rho(\mathbf{p})d\mathbf{p}
  \right\}.
$$
According to \cite{BM10}, for a given $Q$, we minimize the entropy term over all probability distributions $\rho$ that have a fixed normalized second moment $Q$.
Define the bulk potential
$$
 \Psi_\delta(Q)= \begin{cases}
  \psi_\delta(Q), \quad\mbox{if } -(1+\delta)m < \mathrm\lambda_i(Q)< 2(1+\delta)m, \ \ 1\leq
  i\leq 3, \\
  \infty, \qquad\ \ \mbox{otherwise}.
  \end{cases}
$$
Namely, the minimization over the set $\A_{Q,\delta}$ is only defined for those $Q$-tensors that can
be expressed as the normalized second moment of a probability distribution
function $\rho$ and whose eigenvalues obey the above physical constraints.
Similarly to \cite{FRSZ15}, one can easily check that
\begin{itemize}
\item $\psi_\delta(Q)$ is strictly convex.
\item $\psi_\delta(Q)$ is smooth in $\mathrm{Dom}(\Psi_\delta)$, i.e., when
$$
-(1+\delta)m < \mathrm\lambda_i(Q)< 2(1+\delta)m,\ \  1\leq  i\leq 3.
$$
\item $\psi_\delta(Q)$ is isotropic, i.e.,
$$
  \psi_\delta(Q)=\psi_\delta(BQB^T), \quad\forall\, B\in \mathrm{SO}(3).
$$
\end{itemize}
\begin{remark}
Since $\psi_\delta(Q)$ is isotropic and satisfies $\tr(Q)=0$, it follows from
\cite{B84, B12} that
\begin{equation}
\label{isotropy}
\psi_\delta(Q)=h_\delta(|Q|^2, \mathrm{det}Q), \quad\forall\, Q\in
\sS_0^{(3)},
\end{equation}
where $h_\delta$ is smooth in $\mathrm{Dom}(\Psi_\delta)$ (see \cite[Section 5]{B84}). Then we have
\begin{align}\label{isotropy1}
\frac{\partial \psi_\delta(Q)}{\partial Q}
&= \frac{\partial h_\delta}{\partial |Q|^2} \frac{\partial |Q|^2}{\partial Q}+ \frac{\partial h_\delta}{\partial \mathrm{det}Q} \frac{\partial \mathrm{det}Q}{\partial Q}\nonumber\\
&= 2\frac{\partial h_\delta}{\partial |Q|^2}Q+\frac{\partial h_\delta}{\partial \mathrm{det}Q} Q^*,
\end{align}
where $Q^*$ is the adjoint matrix of the symmetric matrix $Q$.
\end{remark}

 Now consider the linear equation  \eqref{R equ} on $[0,T]$.
 Let $T_\delta^\ast \in (0, T]$  be defined as follows
$$
  T_\delta^\ast=\sup\big\{0\leq t< T: \ -(1+\delta)m < \mathrm\lambda_i(R(t, x))<2 (1+\delta)m,\ \forall\, x\in\TT,\ 1\leq
  i\leq 3 \big\}.
$$
We claim that
$$
T_\delta^\ast=T.
$$
This conclusion can be justified by a contradiction argument.
Suppose $T_\delta^\ast<T$, we see that
$\psi_\delta(R(t))\in H^2(\TT)$ for $t\in[0, T_\delta^\ast)$.
Taking the (matrix) inner product of equation \eqref{R equ} with
$\dfrac{\partial\psi_\delta}{\partial{R}}$ yields
$$
  \partial_t\psi_\delta+\mathbf{u}\cdot\nabla\psi_\delta-(\Omega{R}-R\Omega):
  \dfrac{\partial\psi_\delta}{\partial{R}}=\eps\Delta{R}:
  \dfrac{\partial\psi_\delta}{\partial{R}}, \qquad \forall\, (x,t)\in \TT \times [0,  T_\delta^\ast).
$$
Since $(\Omega{R}-R\Omega): R=(\Omega{R}-R\Omega): R^*=0$, then by \eqref{isotropy1} we see
that the above equation can be reduced
to
\begin{align}\label{equ for max principle}
  \partial_t\psi_\delta+\mathbf{u}\cdot\nabla\psi_\delta
  &=\eps\Delta{R}:
  \dfrac{\partial\psi_\delta}{\partial{R}}\non\\
  &=\eps\Delta\psi_\delta-\eps\dfrac{\partial^2\psi_\delta}{\partial{R}_{ij}\partial{R}_{lm}}\partial_k R_{lm}\partial_k R_{ij}\non\\
  &\leq \eps\Delta\psi_\delta,
  \qquad\forall\, (x,t)\in \TT \times [0,  T_\delta^\ast),
\end{align}
where in the last step we used the convexity of
$\psi_\delta(R)$.

Hence, the maximum principle for $\psi_\delta$ together with the assumption
\eqref{initial setting} on the initial eigenvalue-range for $\bar R$ gives
\begin{equation}
  \psi_\delta(R(t, x)) \leq \sup\{\psi_\delta(\bar R(x))\} <
  \infty, \qquad \forall\, (x,t)\in \TT \times [0,  T_\delta^\ast).
  \nonumber
\end{equation}
The strict physicality property of  $\psi_\delta$ (in a similar manner as in \cite[Theorem 1]{BM10}) indicates that
there exists $\varepsilon=\varepsilon(\delta)>0$
\begin{equation}
  -(1+\delta)m+\varepsilon(\delta) < \mathrm\lambda_i(R(x,t))<
  2(1+\delta)m-\varepsilon(\delta), \qquad \forall\, (x,t)\in \TT \times [0,  T_\delta^\ast), \ \ 1\leq i\leq 3.
  \nonumber
\end{equation}
This however leads to a contradiction with the definition of
$T_\delta^\ast$.

As a consequence, we conclude that $T_\delta^\ast=T$ and
\begin{equation}
  -(1+\delta)m < \mathrm\lambda_i(R(x,t))<
  2(1+\delta)m, \qquad \forall\,(x,t)\in \TT \times [0, T), \ \ 1\leq i\leq 3.\nonumber
\end{equation}
Since $\delta>0$ is arbitrary, after performing a continuity argument we can obtain
that
\begin{equation}
  -m \leq \mathrm\lambda_i(R(x,t))\leq 2m, \qquad \forall\,(x,t)\in \TT\times [0, T], \ \ 1\leq i\leq 3.
  \nonumber
\end{equation}

\subsection{Eigenvalue-range preservation under weaker regularity}
\label{subsec:proofofweakflow}
We show that the previous eigenvalue-range
preservation property may still hold for solutions with weaker
regularity (even weaker than what we actually need).

\begin{proposition}\label{lemma:smoothflow}
Let $\mathbf{u}\in L^\infty(0, T; L^2(\TT))\cap L^2(0,T;
H^1(\TT))$, $\mathbf{u}_\delta\in L^\infty (0,T;
W^{1,\infty}(\TT)\cap H^2(\TT))$,
$\nabla\cdot\mathbf{u}=\nabla\cdot\mathbf{u}_\delta=0$ be such that
$$
\mathbf{u}_\delta \to \mathbf{u},\quad \text{strongly in } L^\infty(0,T;L^2(\TT))\cap L^2(0,T;H^1(\TT)) \text{ as }\delta\to 0.
$$
Assume that  $\dQ\in L^\infty(0,T;H^1(\TT))\cap
L^2(0,T;H^2(\TT))$ is the unique solution of the system
\begin{equation}
\left\{
\begin{aligned}
&\dQ_t+\mathbf{u_\delta}\cdot\nabla\dQ-\Omega_\delta\dQ+\dQ\Omega_\delta\non\\
&\quad =\eps \Delta\dQ- a\dQ+b\Big[(\dQ)^2-\frac{1}{3}\tr((\dQ)^2)\Id\Big]- c\dQ\tr^2(\dQ),\\
&\dQ|_{t=0}=Q_0(x),
\end{aligned}
\right.
\label{Q-equdelta}
\end{equation}
and $\dQ$ satisfies
\begin{equation}
\underline{\lambda}_i\leq\lambda_i (\dQ(x,t))\leq\overline{\lambda}_i,\quad (x,t)\in \TT\times[0,T],\ \ i=1,2,3,
\label{eq:eigenvalconstapprox}
\end{equation}
where
$\lambda_i (\dQ)$ denote the increasingly ordered eigenvalues of $\dQ$.

Then $Q^{(\delta)}(x,t)\to Q(x,t)$ a.e. in $\TT\times[0,T]$ as $\delta \to 0$, where $Q$ is the unique solution
in $L^\infty(0,T; H^1(\TT))\cap L^2(0,T; H^2(\TT))$
of the system
\begin{equation}
\left\{
\begin{aligned}
&Q_t+\mathbf{u}\cdot\nabla{Q}-\Omega{Q}+Q\Omega\non\\
&\quad =\veps\Delta{Q}-a Q+ b\Big[Q^2-\frac{1}{3}\tr(Q^2)\Id\Big]- c Q\tr(Q^2),\\
&\dQ|_{t=0}=Q_0(x).
\end{aligned}
\right.
\label{Q-equ+}
\end{equation}
Moreover, we have an eigenvalue constraint on $Q$ (provided that its
initial datum $Q_0$ also has the same constraint):
\begin{equation}
\underline{\lambda}_i\le \lambda_i (Q(x,t))\le \overline{\lambda}_i,\quad \forall\, t\in [0,T],\ \text{a.e. }x\in \TT,\ i=1,2,3.
\label{eq:eigenvalconst}
\end{equation}
\end{proposition}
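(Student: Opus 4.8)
The plan is to establish uniform-in-$\delta$ a priori bounds on $\dQ$, extract a convergent subsequence, pass to the limit in \eqref{Q-equdelta} to recover \eqref{Q-equ+}, use a difference estimate to obtain uniqueness of the limit (hence convergence of the whole family), and finally transfer \eqref{eq:eigenvalconstapprox} to the limit by closedness of the eigenvalue-constraint set. The crucial structural input is that \eqref{eq:eigenvalconstapprox} furnishes a bound $\|\dQ\|_{L^\infty(\TT\times(0,T))}\le M$ with $M$ independent of $\delta$. Testing \eqref{Q-equdelta} with $\dQ$ and with $-\Delta\dQ$, and using (i) the pointwise algebraic identity $(\Omega_\delta A-A\Omega_\delta):A=0$ for symmetric $A$, which annihilates the rotation terms in the $L^2$-level estimate, (ii) $\nabla\cdot\mathbf{u}_\delta=0$, (iii) the $L^\infty$ bound to control the cubic nonlinearity, (iv) the bound $\|\Omega_\delta\dQ\|_{L^2}\le M\|\nabla\mathbf{u}_\delta\|_{L^2}$, and (v) the two-dimensional interpolation $\|f\|_{L^4}\le C\|f\|_{L^2}^{1/2}\|f\|_{H^1}^{1/2}$ for the transport term, a Gronwall argument yields that $\dQ$ is bounded in $L^\infty(0,T;H^1(\TT))\cap L^2(0,T;H^2(\TT))$ and $\partial_t\dQ$ in $L^2(0,T;L^2(\TT))$, all uniformly in $\delta$ (only the uniform bound $\|\mathbf{u}_\delta\|_{L^\infty(0,T;L^2)\cap L^2(0,T;H^1)}\le C$ from the convergence hypothesis enters).

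Next, by Aubin--Lions a subsequence of $\dQ$ converges strongly in $L^2(0,T;H^1(\TT))$ and in $C([0,T];H^s(\TT))$ for $s<1$, weakly-$*$ in $L^\infty(0,T;H^1(\TT))\cap L^\infty(\TT\times(0,T))$, weakly in $L^2(0,T;H^2(\TT))$, and, along a further subsequence, a.e. in $\TT\times(0,T)$; in particular $\dQ\to Q$ in $L^p(\TT\times(0,T))$ for every $p<\infty$ by dominated convergence (domination by the constant $M$ on the finite-measure domain), and $|Q|\le M$ a.e. Since $\mathbf{u}_\delta\to\mathbf{u}$ strongly in $L^2(0,T;H^1(\TT))$ we also have $\Omega_\delta\to\Omega$ strongly in $L^2(\TT\times(0,T))$. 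Writing $\mathbf{u}_\delta\cdot\nabla\dQ-\mathbf{u}\cdot\nabla Q=(\mathbf{u}_\delta-\mathbf{u})\cdot\nabla\dQ+\mathbf{u}\cdot\nabla(\dQ-Q)$ and similarly for the rotation terms, using weak $L^2(0,T;L^2)$ convergence of $\Delta\dQ$ and dominated convergence for the polynomial term, one passes to the limit in \eqref{Q-equdelta} and finds that $Q$ solves \eqref{Q-equ+} in the asserted class; the initial datum is attained thanks to the $C([0,T];H^s)$ convergence.

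For uniqueness, let $Q_1,Q_2$ solve \eqref{Q-equ+} with the same data and set $\tilde Q=Q_1-Q_2$; testing the difference equation with $\tilde Q$, the transport and rotation terms vanish for the structural reasons above, $\eps\Delta\tilde Q$ contributes $-\eps\|\nabla\tilde Q\|_{L^2}^2\le0$, and the polynomial difference is controlled by $C\big(1+\|Q_1\|_{L^\infty}^2+\|Q_2\|_{L^\infty}^2\big)\|\tilde Q\|_{L^2}^2$ with coefficient in $L^1(0,T)$ since $Q_i\in L^2(0,T;H^2(\TT))\hookrightarrow L^2(0,T;L^\infty(\TT))$; Gronwall forces $\tilde Q\equiv0$. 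Hence the limit is independent of the subsequence, so the whole family $\dQ$ converges to $Q$; alternatively one tests the difference equation for $\dQ-Q$ directly, its right-hand side carrying factors $\|\mathbf{u}_\delta-\mathbf{u}\|_{L^\infty(0,T;L^2)\cap L^2(0,T;H^1)}\to0$, and Gronwall gives $\dQ\to Q$ in $L^\infty(0,T;L^2(\TT))\cap L^2(0,T;H^1(\TT))$, hence a.e. along a subsequence. Finally, the set $\{A\in\sS_0^{(3)}:\underline{\lambda}_i\le\lambda_i(A)\le\overline{\lambda}_i,\ i=1,2,3\}$ is closed, and $\dQ(x,t)$ lies in it for a.e. $(x,t)$; passing to the a.e. limit yields \eqref{eq:eigenvalconst}, and in particular if $Q_0$ satisfies the constraint then so does $Q$.

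The step I expect to be the main obstacle is the uniform $H^2$-estimate of Step 1: since $\mathbf{u}$, and hence $\mathbf{u}_\delta$ uniformly, is only controlled in $L^\infty(0,T;L^2)\cap L^2(0,T;H^1)$, the vorticity $\Omega_\delta$ lies merely in $L^2(\TT\times(0,T))$, so the term $\Omega_\delta\dQ$ in \eqref{Q-equdelta} is genuinely borderline for parabolic estimates; it is precisely the uniform $L^\infty$-bound provided by \eqref{eq:eigenvalconstapprox} --- the very property one is trying to propagate --- that lets one write $\|\Omega_\delta\dQ\|_{L^2}\le M\|\nabla\mathbf{u}_\delta\|_{L^2}\in L^2(0,T)$ and close the estimate uniformly in $\delta$; without it neither the nonlinear $H^2$-bound nor the ensuing compactness would be available.
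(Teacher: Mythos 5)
Your proposal is correct, but it is organized along a genuinely different route than the paper's proof. The paper treats the existence and uniqueness of the limit solution $Q$ as given (it is built into the statement of the proposition) and proves the convergence in one stroke: it first derives uniform $L^\infty$-bounds on \emph{both} $Q$ and $\dQ$ directly from the equations, by testing with $Q(|Q|^2-\eta)_+$ and using that the cubic bulk term with $c>0$ is dissipative for $|Q|$ large; it then writes the equation for $\dR=\dQ-Q$, tests with $\dR$, exploits the same pointwise cancellations you list together with the $L^\infty$-bounds to reach $\tfrac{d}{dt}\|\dR\|_{L^2}^2\le C\|\dR\|_{L^2}^2+C\big(\|\mathbf{u}-\mathbf{u}_\delta\|_{L^2}+\|\Omega-\Omega_\delta\|_{L^2}\big)$, and concludes by Gronwall; the eigenvalue constraint is then transferred to the a.e.\ limit by continuity of eigenvalues, exactly as in your last step. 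Your primary route --- uniform $H^1$/$H^2$ energy estimates, Aubin--Lions compactness, limit passage in the weak formulation, and a separate uniqueness argument --- is longer but buys the \emph{construction} of $Q$ rather than assuming it; the ``alternative'' you mention at the end (testing the equation for $\dQ-Q$ directly) is precisely the paper's argument. Two further points of contrast: (i) you extract $\|\dQ\|_{L^\infty}\le M$ from the hypothesis \eqref{eq:eigenvalconstapprox}, while the paper re-derives it (and, crucially for its difference estimate, the corresponding bound for $Q$) from the maximum-principle structure of the nonlinearity, so the paper never needs your $H^2$-level compactness; (ii) your diagnosis of the main obstacle is apt --- the uniform $L^\infty$-bound is exactly what tames $\Omega_\delta\dQ$ with $\Omega_\delta$ only in $L^2$ in space-time, and the paper's estimate leans on the same bound by pairing $\|\Omega-\Omega_\delta\|_{L^2}$ against $\|\dR\|_{L^\infty}$.
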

\begin{proof}
First, we prove  an a priori $L^\infty$-bound for $Q$.
Since $c>0$, there exists a positive number $\eta_0>0$ such that for any $Q\in \sS_0^{(3)}$,
$$
-a\tr(Q^2)+ b\tr(Q^3)- c\tr^2(Q^2)\le 0,\quad \text{if }|Q|\ge \eta_0.
$$
Then we take $\eta:=\max\{\|Q_0\|_{L^\infty},\eta_0\}$. Multiplying
\eqref{Q-equ+} by $Q(|Q|^2-\eta)_+$ and integrating over $\TT$, after integration by parts, we obtain
\begin{align}
\frac{1}{2}\frac{d}{dt}\int_{\TT} (|Q|^2-\eta)_+^2\, dx
&=-\veps\int_{\TT}|\nabla Q|^2(|Q|^2-\eta)_+\,dx-\frac{\veps}{2}\int_{\TT} |\nabla (|Q|^2-\eta)_+|^2\,dx\non\\
&\quad+\int_{\TT} (-a\tr(Q^2)+ b\tr(Q^3)- c\tr^2(Q^2))(|Q|^2-\eta)_+\, dx\non\\
&\le 0,\non
\end{align}
which implies
\begin{equation}\label{Qinftyeta}
\|Q(\cdot, t)\|_{L^\infty(\TT)}\le \eta, \quad \forall\,t\in [0,T].
\end{equation}
In a similar manner, we can obtain the same result for $\dQ$
\begin{equation}\label{dQinftyeta}
\|\dQ(\cdot, t)\|_{L^\infty(\TT)}\le \eta, \quad \forall\,t\in [0,T].
\end{equation}
Denote the difference $\dR:=\dQ-Q$. Then $\dR$  satisfies the following equation
\begin{equation*}
\left\{
\begin{aligned}
&\partial_t \dR+\mathbf{u}_\delta\cdot \nabla \dR-\Omega_\delta \dR+\dR\Omega_\delta\\
&\quad =\eps\Delta\dR- a\dR+(\mathbf{u}-\mathbf{u}_\delta)\cdot \nabla Q-(\Omega-\Omega_\delta)Q+Q(\Omega-\Omega_\delta)\\
&\qquad+ b\left(\dR\dQ+Q\dR-\frac{1}{3}\textrm{tr}\big(\dR\dQ+Q\dR\big)\Id\right)\\
&\qquad- c\left(\dR\textrm{tr}((\dQ)^2)+Q\textrm{tr}\big(\dR\dQ+Q\dR\big)\right),\\
&\dR|_{t=0}=\dR_0=0.
\end{aligned}
\right.
\end{equation*}
Multiply the above equation by $\dR$ and integrating over $\TT$, after integration  by parts  and taking into account the uniform $L^\infty$-bounds on
$Q$, $\dQ$ and also $\dR$, respectively (see \eqref{Qinftyeta}, \eqref{dQinftyeta}), we get
\begin{align}
& \frac{d}{dt}\int_{\TT} |\dR|^2\,dx\non\\
&\quad \le C\int_{\TT}|\dR|^2\,dx+C\int_{\TT}
\big[(\mathbf{u}-\mathbf{u}_\delta)\cdot \nabla Q-(\Omega-\Omega_\delta)Q+Q(\Omega-\Omega_\delta)\big]:\dR\,dx\non\\
&\quad  \le C\|\dR\|_{L^2}^2+C\big(\|\nabla Q\|_{L^2}
\|\mathbf{u}-\mathbf{u}_\delta\|_{L^2}+\|Q\|_{L^2}\|\Omega-\Omega_\delta\|_{L^2}\big)\|\dR\|_{L^\infty},\non\\
&\quad  \le C\|\dR\|_{L^2}^2+C\big(\|\mathbf{u}-\mathbf{u}_\delta\|_{L^2}+\|\Omega-\Omega_\delta\|_{L^2}\big),\non
\end{align}
where $C$ a constant depending only  on $a,b,c,\eta$ and $\|Q\|_{L^\infty(0,T;H^1)}$. Thus, it holds
\begin{align}
\|\dR(t)\|_{L^2}^2 \leq C e^{Ct}\int_0^t e^{-C\tau} \big(\|\mathbf{u}(\tau)-\mathbf{u}_\delta(\tau)\|_{L^2}+\|\Omega(\tau)-\Omega_\delta(\tau)\|_{L^2}\big)\, d\tau,\quad \forall\, t\in [0,T].\non
\end{align}
From our assumptions, we know that
$$
\int_0^t\|\mathbf{u}(\tau)-\mathbf{u}_\delta(\tau)\|_{L^2}^2\,d\tau\to 0, \quad
\int_0^t\|\Omega(\tau)-\Omega_\delta(\tau)\|_{L^2}^2\,d\tau \to 0,\quad \forall\, t\in [0,T]
$$
as $\delta\to 0$. Then we can conclude
\begin{equation}
\lim_{\delta\to 0}\|\dR(t)\|_{L^2}\to 0, \quad \forall\,  t\in [0,T].
\end{equation}
Thus, for any time $t\in [0,T]$, we get $\dQ(x,t)\to Q(x,t)$ a.e. in $\TT$ and thanks to the continuity
 of eigenvalues as functions of matrices (see, for instance \cite{nomizu}) we obtain the claimed bounds on the eigenvalues of $Q$. The proof is complete.
\end{proof}

\subsection{Proof of Theorem \ref{main theorem}}
\label{subsec:proofofmain}
We are in a position to prove Theorem \ref{main theorem}. Returning to equation \eqref{nondim:Q1}, we assume for now $\mathbf{u}\in L^\infty(0,T;W^{1,\infty})\cap L^\infty (0,T;H^2)$, a regularity assumption that will be eventually removed thanks to the previous subsection. For some time $T>0$, $n\in \mathbb{N}$ and $k=\{1,2,...,n\}$, we denote
\begin{align*}
   U_k&\defeq\left(\mathcal{V}\Big(\frac{k}{n}T,\frac{k-1}{n}T\Big)S\Big(\frac{T}{n},\cdot\Big)\right)
  \circ\cdots\left(\mathcal{V}\Big(\frac{2T}{n},\frac{T}{n}\Big)S\Big(\frac{T}{n},\cdot\Big)\right)\circ \left(\mathcal{V}\Big(\frac{T}{n},
  0\Big)S\Big(\frac{T}{n},\cdot\Big)\right)Q_0(x),
\end{align*}
and $U_0=Q_0$. Furthermore, we define
\begin{align}
  U(t)&\defeq \mathcal{V}\Big(\frac{k-1}{n}T+\eta,\frac{k-1}{n}T\Big)S(\eta,\cdot)U_{k-1}, \quad \text{for }t=\frac{k-1}{n}T+\eta\ \ \text{with }\ 0\le \eta<\frac{T}{n}, \label{limit}
\end{align}
for $k=1,...,n$.

Recalling the Trotter product formula, in its abstract form as in Proposition \ref{proposition in T} in the Appendix, we choose
\begin{align*}
&V=H^2(\TT), \quad  W=H^1(\TT),\quad \gamma=\delta=\frac12,
\end{align*}
and
\begin{align*}
&A(t)Q=\eps \Delta Q-\mathbf{u}(\cdot,t)\cdot \nabla Q + \Omega(\cdot,t)Q-Q\Omega(\cdot,t),\\
&X(Q)=-  a{Q}+  b\left(Q^2-\dfrac{1}{3}\tr(Q^2)\Id\right)- c\tr(Q^2)Q.
\end{align*}
Below we proceed to verify the assumptions (H1)--(H2) made in Proposition \ref{proposition in T} under the above choices.

First, we can easily see that the solution operator of the ODE system  \eqref{S equ} fulfills the assumption (H2) on its existence interval $[0,T]$.

Next, for $\mathbf{u}\in L^\infty (0,T; W^{1,\infty} (\TT))$, the existence of the evolution system $\mathcal{V}(t,s)$ associated with the linear parabolic problem \eqref{R equ} with initial datum $Q_0\in V$ can be easily verified. Then it remains to verify the assumption (H1).

Let $Q(t)=\mathcal{V}(t, 0)Q_0(x)$ be solution to \eqref{R equ} with initial datum $Q_0$.
Multiplying both sides of equation \eqref{R equ} with $Q$ and integrating over $\TT$, we have
\begin{align*}
\dfrac12\dfrac{d}{dt}\|Q(t)\|_{L^2}^2
&=\int_{\TT} Q_t: Q\;dx \\
&=-\eps\int_{\TT}|\nabla{Q}|^2\;dx-\frac12\int_{\TT}\mathbf{u}\cdot\nabla{|Q|^2}dx+\int_{\TT}(\Omega{Q}-{Q}\Omega): Q\;dx \\
&=-\eps\int_{\TT}|\nabla{Q}|^2\;dx \leq 0,
\end{align*}
where we used the facts that $\nabla\cdot\mathbf{u}=0$ and
$(\Omega{Q}-Q\Omega): Q=0$. Thus,
\be
\|Q(t)\|_{L^2}\leq\|Q(s)\|_{L^2}, \quad\forall\, 0\leq s< t\leq T.\label{esL2}
\ee
Multiplying both sides of equation \eqref{R equ} with
$\Delta^2{Q}$ and integrating over $\TT$,  we have
\bigskip
\begin{align*}
&\frac12\frac{d}{dt}\|\Delta{Q}(t)\|_{L^2}^2\non\\
&\quad =-\eps\int_{\TT}|\nabla\Delta{Q}|^2\,dx+\int_{\TT}\nabla(\mathbf{u}\cdot\nabla{Q}): \nabla\Delta{Q}\,dx-\int_{\TT} \nabla (\Omega{Q}-{Q}\Omega): \nabla\Delta{Q}\, dx \\
&\quad \leq- \frac{\eps}{2}\int_{\TT}|\nabla\Delta{Q}|^2\,dx+C\int_{\TT}|\nabla\mathbf{u}|^2|\nabla{Q}|^2\,dx
+C\int_{\TT}|\mathbf{u}|^2|\nabla^2Q|^2\,dx+C\int_{\TT}|\nabla^2\mathbf{u}|^2| Q|^2\, dx\\
&\quad \leq-\frac{\eps}{2}\|\nabla\Delta{Q}\|_{L^2}^2+C\|\mathbf{u}\|_{W^{1,\infty}}^2 \Big(\|\nabla^2{Q}\|_{L^2}^2+\|\nabla {Q}\|_{L^2}^2\Big) +C\|\mathbf{u}\|_{H^2}^2\|Q\|_{L^\infty}^2\\
&\quad \leq C\Big(\|\Delta{Q}\|_{L^2}^2+\|{Q}\|_{L^2}^2\Big),
\end{align*}
where the constant $C$ depends on $\|\mathbf{u}\|_{L^\infty(0,T; W^{1,\infty}(\TT))}$ and $\|\mathbf{u}\|_{L^\infty(0,T; H^{2}(\TT))}$.
As a result, we can easily deduce from Gronwall's lemma that
$$
  \|Q(t)\|_{H^2}\leq e^{C(t-s)}\|Q(s)\|_{H^2}, \quad \forall\, 0\leq s<t\leq T,
$$
which implies
\be\label{est:regularAts}
\| \mathcal{V}(t, s)\|_{\LL(V)}\leq e^{C(t-s)}, \quad \forall\, 0\leq s<t\leq T.
\ee

We note that the solution $Q(t)$ can be written as
\begin{align}
Q(t)= e^{(t-s)\eps\Delta} Q(s) +\int_s^{t} e^{(t-\tau)\eps\Delta} ( -\mathbf{u}\cdot \nabla Q +\Omega Q-Q\Omega)(\tau)\, d\tau,\quad  0\leq s<t\leq T.\non
\end{align}
We recall a simple and rather standard estimate for the heat
equation. To this end, we decompose any function $u\in L^2$ as
$u=\sum_{k\in\N}\omega_k u_k $ where $\{\omega_k\}_{k\in\N}$ form a
complete orthonormal system in $L^2$ of eigenvectors of the minus
Laplacian, with corresponding eigenvalues $\{\lambda_k\}_{k\in\N}$
ordered in a non-decreasing sequence. Then we have
$e^{t\Delta}u=\displaystyle\sum_{k\in\N} e^{-\lambda_k t}\omega_k
u_k$ and $-\Delta
\left(e^{t\Delta}u\right)=\displaystyle\sum_{k\in\N} \lambda_k
e^{-\lambda_k t}\omega_k u_k$. It easily follows that
\be
\|\nabla (e^{t\Delta} u)\|_{L^2}^2=(-\Delta (e^{t\Delta}u),
e^{t\Delta}u)=\sum_{k\in\N} e^{-\lambda_k t}\lambda_k (u_k)^2\le
\frac{C}{t}\sum_{k\in\N} (u_k)^2=\frac{C}{t}\|u\|^2, \quad
\text{for}\ t>0. \ee
Using the above estimate and the uniform estimate of $\mathbf{u}$, we obtain
\begin{align}
&\|\Delta Q(t)\|_{L^2}\non\\
&\quad \le C(t-s)^{-\frac12}\|\nabla Q(s)\|_{L^2}
+C\int_s^t (t-\tau)^{-\frac12} \|\nabla(\mathbf{u}(\tau)\cdot \nabla Q(\tau))\|_{L^2}\,d\tau \non\\
&\qquad +C\int_s^t (t-\tau)^{-\frac12} \|\nabla (\Omega(\tau) Q (\tau)-Q (\tau)\Omega(\tau))\|_{L^2}\, d\tau\non\\
&\quad \le C(t-s)^{-\frac12}\|\nabla Q(s)\|_{L^2}
  +C\int_s^t (t-\tau)^{-\frac12} \|\Delta Q(\tau)\|_{L^2}\,d\tau \non\\
&\qquad   +C\int_s^t (t-\tau)^{-\frac12} \|Q(\tau)\|_{H^1} \,d\tau.\label{integral-inequality}
\end{align}
We recall estimate \eqref{esL2}, which together with the simple fact on $\TT$
$$\|\Delta u\|_{L^2}+\|u\|_{L^2}\le C\|u\|_{H^2}\le \tilde C\left(\|\Delta u\|_{L^2}+\|u\|_{L^2}\right)$$
and the last estimate implies
\be
\|Q(t)\|_{H^2}\le C(t-s)^{-\frac12}\| Q(s)\|_{H^1}
  +C\int_s^t (t-\tau)^{-\frac12} \| Q(\tau)\|_{H^2}\,d\tau.
  \label{integral-inequality+}
\ee
Let $$f(t)\defeq (t-s)^{\frac12}\|Q(t)\|_{H^2}.$$
We deduce from the above integral inequality
that
\begin{align}
f(t)\leq  C\|Q(s)\|_{H^1} + C(t-s)^{\frac12}\int_s^t g(\tau) f(\tau) \, d\tau, \quad 0\leq s<t\leq T,
\end{align}
where
\be
g(\tau):=(t-\tau)^{-\frac 12} (\tau-s)^{-\frac 12}.\non
\ee
On the other hand we have:
\begin{align}
\int_s^t g(\tau)\, d\tau& = \int_s^t (\tau-s)^{-\frac12} (t-\tau)^{-\frac12} \, d\tau\non\\
& = \int_s^{\frac{t+s}{2}} (\tau-s)^{-\frac12} (t-\tau)^{-\frac12}+\int_{\frac{t+s}{2}}^t (\tau-s)^{-\frac12} (t-\tau)^{-\frac12} \, d\tau \nonumber\\
& \leq \sqrt{\frac{2}{t-s}} \int_s^{\frac{t+s}{2}} (\tau-s)^{-\frac12}\, d\tau +
\sqrt{\frac{2}{t-s}} \int_{\frac{t+s}{2}}^t (t-\tau)^{-\frac12} \, d\tau \nonumber\\
& = 2\sqrt{\frac{2}{t-s}} \int_0^{\frac{t-s}{2}} \sigma^{-\frac12}\, d\sigma=4.\non
\end{align}
As a consequence, by Gronwall's lemma, we obtain
\begin{align}
f(t)\leq C\|Q(s)\|_{H^1} e^{C(t-s)^{\frac12}\int_s^t g(\tau)\, d\tau}\leq C\|Q(s)\|_{H^1} ,\quad 0\leq s<t\leq T,\non
\end{align}
which implies that
$$
\| Q(t)\|_{H^2}\leq C\|Q(s)\|_{H^1} (t-s)^{-\frac12},\quad 0\leq s<t\leq T.
$$
Combing the above estimate with \eqref{esL2}, we infer that
\be\label{est:singularreg}
\|\mathcal{V}(t, s)\|_{\LL(W, V)}\leq C(T)(t-s)^{-\frac12}, \quad 0\le s<t\le T.
\ee

Next, we have the following estimate
\begin{align}
&\|Q(t)-Q(s)\|_{L^2}\non\\
&\quad \leq  \int_s^t \|\partial_t Q(\tau)\|_{L^2}\, d\tau\nonumber\\
&\quad \leq C \int_s^t \left( \|\Delta{Q}(\tau)\|_{L^2}+\|(\mathbf{u}\cdot\nabla{Q})(\tau)\|_{L^2}+\|(\Omega{Q}-{Q}\Omega)(\tau)\|_{L^2}\right)\, d\tau \non\\
&\quad \leq C \int_s^t \left( \|{Q}(\tau)\|_{H^2}+\|\mathbf{u}(\tau)\|_{L^\infty}\|\nabla{Q}(\tau)\|_{L^2}+\|\nabla \mathbf{u}(\tau)\|_{L^3}\|{Q}(\tau)\|_{L^6}\right)\, d\tau\non\\
&\quad \leq C \int_s^t \|{Q}(\tau)\|_{H^2} \, d\tau\non\\
&\quad \leq Ce^{CT}(t-s) \|{Q}(s)\|_{H^2},\quad 0\leq s<t\leq T,\non
\end{align}
where $C$ may depend on $\|\mathbf{u}\|_{L^\infty(0,T;W^{1,\infty})}$.
The last estimate implies
$$
 \|\mathcal{V}(t, s)-I\|_{\LL(H^2, L^2)}\leq C(T)(t-s), \quad\ 0\leq s<t\leq T.
$$
On the other hand, from \eqref{est:regularAts} we already know
$$
 \|\mathcal{V}(t, s)-I\|_{\LL(H^2, H^2)}\leq C(T),\quad\ 0\leq s<t\leq T.
$$
Hence, from the interpolation inequality  $\|f\|_{H^1}\le C \|f\|_{L^2}^{\frac 12}\|f\|_{H^2}^{\frac 12}$ for any $f\in H^2(\TT)$, we easily conclude that
\be\label{eq:dualityinterpo}
 \|\mathcal{V}(t, s)-I\|_{\LL(V, W)}\leq C(t-s)^{\frac12},\quad\ 0\leq s<t\leq T.
\ee

In view of the estimates \eqref{est:regularAts}, \eqref{est:singularreg} and \eqref{eq:dualityinterpo}, the evolution system $\mathcal{V}(t, s)$ satisfies \eqref{property of A}. Thus, the assumption (H1) is verified.

In summary, we are able to apply Proposition \ref{proposition in T} to conclude that
\begin{equation}
 \|Q(t)-U(t)\|_{H^2}\le Cn^{-\frac12}, \quad \;\forall\; t\in [0,T].\label{conv}
\end{equation}
On the other hand, we observe that $U_k$ can be viewed as the successive superpositions
of solutions of the ODE part \eqref{S equ} and the second order
parabolic equation part \eqref{R equ} with specific initial data.
As a consequence, since we have already known that both operators $S(t,\cdot)$ and
$\mathcal{V}(t, s)$ preserve the closed convex hull of the range of
the initial data, then the proof of Theorem \ref{main theorem} can
be accomplished by passing to the limit $n\to +\infty$ in
\eqref{limit}, which is guaranteed by the estimate \eqref{conv}
(where thanks to the previous subsection we do use the result on
eigenvalue preservation for $\mathcal{V}(t,s)$ at the level of
regularity on $\mathbf{u}$ that is guaranteed by the strong global
solutions).

The proof of Theorem \ref{main theorem} is complete. $\Box$

\begin{remark}
Theorem~\ref{main theorem} is still valid if the two dimensional torus
setting is replaced by the whole space.
\end{remark}

\section{Large Ericksen number limit in the co-rotational case}
\label{sec:highEricksen}
\setcounter{equation}{0}

In this section we prove Theorem~\ref{thm:highEricksen}.
To this end, we denote by $(\mathbf{v},R)$ the solution of the limit system obtained by formally setting $\veps=0$ in the system \eqref{nondim:u1}--\eqref{nondim:Q1} with $\xi=0$. Hence, it satisfies the following {\it weakly coupled } system:
\begin{align}
&\delt \mathbf{v}+ \mathbf{v}\cdot\nabla\mathbf{v}+\nabla q= 0,\label{nondim:v10-}\\
&\nabla\cdot \mathbf{v}= 0,\label{nondim:dv-}\\
&\delt R+\mathbf{v}\cdot\nabla R=\Omega_{\mathbf{v}}R-R\Omega_{\mathbf{v}}-aR+b\big(R^2-\frac{1}{3}\tr(R^2)\Id\big)- cR\tr(R^2),
\label{nondimn:R10-}\\
&\mathbf{v}|_{t=0}=\mathbf{u}_0,\quad R|_{t=0}=Q_0,\label{nondim:ini-}
\end{align}
where $\Omega_\mathbf{v}:=\displaystyle{\frac{\nabla \mathbf{v}-\nabla^T \mathbf{v}}{2}}$.

 Since it is assumed that $\mathbf{u}_0\in H^{5}$, thanks to the regularity theory for $2D$ Euler equation we have $\mathbf{v}\in L^\infty (0,T; H^{5})$ (see e.g. \cite{kato}).
 Next, by standard results for transport equation on the torus with $\nabla \cdot \mathbf{v}=0$, due to the assumption on the initial datum $Q_0\in H^{4}$ and
 the aforementioned regularity of $\mathbf{v}$, we are able to deduce  $R\in L^\infty(0,T;H^{4})$ (see Proposition~\ref{prop:globalexistlimitsyst}  in the Appendix for further details).

For any $\eps>0$, we denote $(\mathbf{u}^\eps, Q^\eps)$ the global strong solution to system \eqref{nondim:u1}--\eqref{nondim:Q1} with $\xi=0$, subject to the initial data $\mathbf{u}^\eps|_{t=0}=\mathbf{u}_0$ and $Q^\eps|_{t=0}=Q_0$.

Set
 $$\mathbf{w}^\eps=\mathbf{u}^\eps-\mathbf{v},\quad  S^\eps=Q^\eps-R.$$
Then the difference functions $(\mathbf{w}^\eps,S^\eps)$ satisfy the following system (dropping  the superscript $\eps$ for the sake of simplicity):

\begin{equation}
\left\{
   \begin{aligned}
    &\partial_t \mathbf{w}+\mathbf{w}\cdot\nabla \mathbf{w} -\veps\Delta \mathbf{w}+ \nabla(P-q)\\
       &\hspace{5ex} = -(\mathbf{v}\cdot\nabla \mathbf{w}+\mathbf{w}\cdot\nabla \mathbf{v})
         -\veps^2\nabla\cdot(\nabla S\odot\nabla S)\\
       &\hspace{8ex} +\veps^2\nabla\cdot\big(S\Delta S-(\Delta S) S\big)
       +\veps^2 \nabla\cdot\big(S\Delta R+R\Delta S-(\Delta S) R-(\Delta R)S\big)\\
       &\hspace{8ex} -\veps^2 \nabla\cdot\big(\nabla S\odot\nabla R+\nabla R\odot\nabla S\big) +\veps\mathcal{F}_\veps(\mathbf{v},R),\\
       &\nabla \cdot \mathbf{w}=0,\\
       & \partial_t S+ \mathbf{w}\cdot \nabla S-\Omega_\mathbf{w} S +S\Omega_\mathbf{w}-\veps\Delta S\\
       &\hspace{5ex}= -(\mathbf{w}\cdot \nabla R+ \mathbf{v}\cdot \nabla S)+\Omega_\mathbf{w} R+\Omega_\mathbf{v} S -R\Omega_\mathbf{w} -S\Omega_\mathbf{v}\\
       &\hspace{8ex} -aS +b\big[SQ+RS-\frac{1}{3}\textrm{tr}\big(SQ+RS\big)\Id \big]\\
       &\hspace{8ex} - c\left[S\textrm{tr}(Q^2)+R\textrm{tr}(QS+SR)\right]+\eps \Delta R,\\
       & \mathbf{w}|_{t=0}=\mathbf{0},\quad S|_{t=0}=0,
     \end{aligned}\right.
      \label{approxsystemdif}
\end{equation}
  where  $\Omega_\mathbf{w}:=\displaystyle{\frac{\nabla^T \mathbf{w}-\nabla \mathbf{w}}{2}}$ and
\be
  \mathcal{F}_\veps(\mathbf{v},R):=\Delta \mathbf{v}-\veps\nabla\cdot\big(\nabla R\odot \nabla R+\Delta R  R-R\Delta R\big).\non
\ee

 We multiply the third equation in \eqref{approxsystemdif} for $S$ by $-\veps^2\Delta S+\veps S$, integrate over $\T^2$, take the trace and sum with the first equation in \eqref{approxsystemdif} multiplied by $\mathbf{w}$ and integrated over $\T^2$.
 After integration by parts and taking into account the cancellations at the energy level, we obtain
 \begin{align}
 &\frac{d}{dt}\int_{\T^2}\left(\frac{1}{2}|\mathbf{w}|^2+\frac{\veps^2}{2}|\nabla S|^2+\frac{\veps}{2}|S|^2\right)\,dx
 +\int_{\T^2} \Big(\veps |\nabla \mathbf{w}|^2+\veps^3 |\Delta S|^2+ \veps^2|\nabla S|^2\Big) \,dx \non\\
 &\quad = \underbrace{\veps^2\int_{\T^2}\textrm{tr}\left([\mathbf{w}\cdot\nabla S]\Delta S\right)\,dx}_{A_1}
          +\underbrace{\veps^2\int_{\T^2}\textrm{tr}\left([S\Omega_\mathbf{w}-\Omega_\mathbf{w} S]\Delta S\right)}_{A_2}\,dx\non\\
 &\qquad  +\veps^2\int_{\T^2}\textrm{tr}\left([\mathbf{w}\cdot\nabla R]\Delta S\right)\, dx
          + \veps^2\int_{\T^2} \textrm{tr}\left([\mathbf{v}\cdot\nabla S]\Delta S\right)\,dx \non\\
 &\qquad +\veps^2\int_{\T^2}\textrm{tr}\left([S\Omega_\mathbf{v}-\Omega_\mathbf{v}S]\Delta S\right)\,dx
         +\underbrace{\veps^2\int_{\T^2}\textrm{tr}\left([R\Omega_\mathbf{w}-\Omega_\mathbf{w} R]\Delta S\right)\,dx}_{A_3}\non\\
 &\qquad -\underbrace{\veps\int_{\T^2} \textrm{tr}([\mathbf{w}\cdot\nabla S]S)\,dx}_{I_1}
         -\underbrace{\veps\int_{\T^2} \textrm{tr}([S\Omega_\mathbf{w}-\Omega_\mathbf{w} S]S)\,dx}_{I_2} \non\\
 &\qquad -\veps\int_{\T^2}\textrm{tr}([\mathbf{w}\cdot\nabla R] S)\,dx
         -\underbrace{\veps\int_{\T^2} \textrm{tr}([\mathbf{v}\cdot\nabla S]S)\,dx}_{I_3}\non\\
 &\qquad +\veps\int_{\T^2}\textrm{tr}([\Omega_\mathbf{w} R-R\Omega_\mathbf{w}] S)\,dx
         +\underbrace{\veps\int_{\T^2}\textrm{tr}([\Omega_\mathbf{v} S-S\Omega_\mathbf{v}]S)\,dx}_{I_4} \\
 &\qquad \underbrace{-\int_{\T^2}(\mathbf{w}\cdot\nabla \mathbf{w})\cdot \mathbf{w}\, dx-\int_{\T^2}\nabla(P-q)\cdot \mathbf{w}\, dx}_{I_5}\non\\
 &\qquad \underbrace{-\int_{\T^2}(\mathbf{v}\cdot\nabla \mathbf{w})\cdot \mathbf{w}\,dx}_{I_6}
         -\int_{\T^2}(\mathbf{w}\cdot\nabla \mathbf{v})\cdot \mathbf{w}\,dx
         -\underbrace{\veps^2\int_{\T^2}[\nabla\cdot(\nabla S\odot\nabla S)]\cdot \mathbf{w}\, dx}_{A_1'}\non\\
 &\qquad -\underbrace{\veps^2\int_{\T^2}\textrm{tr}([S\Delta S-\Delta S S]\nabla \mathbf{w})\, dx}_{A_2'}
         -\veps^2\int_{\T^2} \textrm{tr}([S\Delta R-\Delta R S]\nabla \mathbf{w})\,dx\non\\
 &\qquad -\underbrace{\veps^2\int_{\T^2} \textrm{tr}([R\Delta S-\Delta S R]\nabla \mathbf{w})\,dx}_{A_3'}
         +\veps^2\int_{\T^2}\textrm{tr}([\nabla S\odot\nabla R+\nabla R\odot\nabla S]\nabla \mathbf{w})\,dx\non\\
 &\qquad -\veps a\int_{\T^2}|S|^2\,dx
         + \veps b\int_{\T^2}\textrm{tr}\big(S QS+R S^2\big)\,dx
        -\veps c\int_{\T^2}\textrm{tr}(Q^2)|S|^2\,dx\non\\
 &\qquad -\veps c\int_{\T^2}\textrm{tr}(R S)\textrm{tr}(QS+S R)\,dx
         + \veps^2a\int_{\T^2}|\nabla S|^2\,dx\non\\
 &\qquad -\veps^2b\int_{\T^2}\textrm{tr}\left([S Q +R S]\Delta S\right)\,dx
         +\veps^2c\int_{\T^2}\textrm{tr}\big(S\Delta S\big)\textrm{tr}(Q^2)\,dx\non\\
 &\qquad +\veps^2c\int_{\T^2}\textrm{tr}(R\Delta S)\textrm{tr}(QS+S R)\,dx\non\\
 &\qquad +\veps\int_{\T^2}\mathcal{F}_\veps(\mathbf{v},R)\cdot \mathbf{w}\, dx+\veps\int_{\T^2}\textrm{tr}(\Delta R(-\eps^2\Delta S+\eps S))\,dx,
 \label{diffes1}
 \end{align}
Using \cite[Lemma 2.1]{ADL15} and the incompressibility condition, one can easily check that
$$A_1=A_1',\quad A_2=A_2',\quad A_3=A_3',$$
as well as
$$I_1=I_2=I_3=I_4=I_5=I_6=0.$$
Then using the H\"older inequality and Young' inequality, we can estimate the remaining terms on the right-hand side of \eqref{diffes1} as follows:
\begin{align}
&\frac12 \frac{d}{dt}\int_{\T^2}\Big(|\mathbf{w}|^2+\veps^2|\nabla S|^2 +\veps|S|^2\Big)\,dx
 +\int_{\T^2}\Big(\veps |\nabla \mathbf{w}|^2 +\veps^3|\Delta S|^2+ \veps^2|\nabla S|^2\Big)\,dx\non\\
&\quad \le \frac{\veps^3}{8}\|\Delta S\|_{L^2}^2
           +C\veps \|\nabla R\|_{L^\infty}^2\|\mathbf{w}\|_{L^2}^2
           +\veps^2 \|\nabla \mathbf{v}\|_{L^\infty}  \|\nabla S\|_{L^2}^2
           +C\veps \|\nabla \mathbf{v}\|_{L^\infty}^2\|S\|_{L^2}^2\non\\
&\qquad +\|\mathbf{w}\|_{L^2}^2 +\veps^2\|\nabla R\|_{L^\infty}^2\|S\|_{L^2}^2
        +\frac{\veps}{8}\|\nabla \mathbf{w}\|_{L^2}^2
        +C\veps \|R\|_{L^\infty}^2\|S\|_{L^2}^2
        +\|\nabla \mathbf{v}\|_{L^\infty}\|\mathbf{w}\|_{L^2}^2\non\\
&\qquad +\frac{\veps}{8}\|\nabla \mathbf{w}\|_{L^2}^2
        +\veps^3 \left( \|\nabla R\|_{L^\infty}^2+\|\Delta R\|_{L^\infty}^2\right)(\left \|\nabla S\|_{L^2}^2+\|S\|_{L^2}^2\right)\non\\
&\qquad +C\veps(\|Q\|_{L^\infty}^2+\|R\|_{L^\infty}^2+1)\|S\|_{L^2}^2+|a|\veps^2\|\nabla S\|_{L^2}^2\non\\
&\qquad +\frac{\veps^3}{8}\|\Delta S\|_{L^2}^2+C\veps(\|Q\|_{L^\infty}^4+\|R\|_{L^\infty}^4)\|S\|_{L^2}^2\non\\\
&\qquad
+\veps^2\|\mathcal{F}_\veps(\mathbf{v},R)\|_{L^2}^2+\|\mathbf{w}\|_{L^2}^2+\frac{\veps^3}{8}\|\Delta S\|_{L^2}^2+\eps\|S\|_{L^2}^2+C\eps^3\|\Delta R\|_{L^2}^2,
\label{diff}
\end{align}
where $C$ is a positive constant independent of $\eps$.

Denote
\begin{align}
\mathcal{Y}(t)&=\|\mathbf{w}\|_{L^2}^2+\veps^2\|\nabla S\|_{L^2}^2 +\veps\|S\|_{L^2}^2,\non\\
\mathcal{H}(t)&= 1+ \|\nabla \mathbf{v}\|_{L^\infty}^2 + (\veps+\veps^2) (\|\nabla R\|_{L^\infty}^2+  \|\Delta R\|_{L^\infty}^2) +\|Q\|_{L^\infty}^4+\|R\|_{L^\infty}^4.\non
\end{align}
It follows from \eqref{diff} and the Cauchy--Schwarz inequality that
\begin{align}
&\frac{d}{dt}\mathcal{Y}(t)+\veps \|\nabla \mathbf{w}\|_{L^2}^2 +\veps^3\|\Delta S\|_{L^2}^2+ \veps^2\|\nabla S\|_{L^2}^2\non\\
&\quad \leq C\mathcal{H}(t)\mathcal{Y}(t)+C\veps^2\|\mathcal{F}_\veps(\mathbf{v},R)\|_{L^2}^2+C\eps^3\|\Delta R\|_{L^2}^2,\non
\end{align}
where $C$ is a constant independent of $\veps$.

From our assumption, we infer that $\mathcal{H}(t)$, $\|\mathcal{F}_\veps(\mathbf{v},R)\|_{L^2}$ and $\|\Delta R\|_{L^2}$ are uniformly  bounded on $[0,T]$ for $0\leq \veps\leq 1$. Since $\mathcal{Y}(0)=0$, it follows from Gronwall's lemma that for $0<\veps< 1$
\begin{align}
&\mathcal{Y}(t) \leq C_T \veps^2, \quad \forall\, t\in [0,T],\non\\
& \int_0^T\left(\veps \|\nabla \mathbf{w}\|_{L^2}^2 +\veps^3\|\Delta S\|_{L^2}^2+ \veps^2\|\nabla S\|_{L^2}^2\right)dt\leq C_T \veps^2.\non
\end{align}

The proof of Theorem~\ref{thm:highEricksen} is complete. \hfill $\Box$

\begin{remark}
The above proof essentially relies on the uniform $L^\infty$-bound of $Q^\eps$, which is the solution of the system \eqref{nondim:u1}--\eqref{nondim:Q1}.
This property is true provided that $\xi=0$ (cf. the proof of Proposition \ref{lemma:smoothflow}). However, for the general case $\xi\neq 0$, it is not clear whether such uniform estimate still holds.
\end{remark}

\section{Lack of eigenvalue-range preservation in the non-corotational case}
\label{sec:lacknoncorot}
\setcounter{equation}{0}

\subsection{The high Ericksen number limit in the non-corrotational case}
As a first step for the proof of Theorem \ref{thm:lackofeigenv}, we prove an analogue of Theorem~\ref{thm:highEricksen} by \emph{assuming} that a uniform a priori $L^\infty$-bound for $Q^\eps$ is available (see \eqref{ass:linftybdeps}).
This assumption will be satisfied if we impose certain bound on the eigenvalues of $Q$ (for instance, \eqref{initial setting+a} below) in the subsequent contradiction argument.

\begin{proposition}\label{thm:highEricksen2} {\bf [High Ericksen number limit in the non co-rotational case].}

Let $\textbf{u}_0\in H^{5}(\TT; \mathbb{R}^3)$  with $\dv \mathbf{u}_0=0$ and $Q_0\in H^{4}(\TT;\sS_0^{(3)})$.
Consider the {\it ``limit  system"}:
\begin{align}
&\delt \mathbf{v}+ \mathbf{v}\cdot\nabla\mathbf{v}+\nabla q=0,\label{nondim:v10x}\\
&\nabla\cdot \mathbf{v}=0,\label{nondim:dvx}\\
&\delt R+\mathbf{v}\cdot\nabla R=(\xi D_{\mathbf{v}}+ \Omega_{\mathbf{v}})\big( R+\frac13 \Id
\big)+\big(R+\frac13 \Id \big)(\xi
D_{\mathbf{v}}- \Omega_{\mathbf{v}}) -2\xi\big(R+\frac13 \Id \big)\tr(R\nabla \mathbf{v})\non\\
&\qquad \qquad \qquad \qquad -aR+b\big[R^2-\frac{1}{3}\tr(R^2)\Id\big]- cR\tr(R^2),
\label{nondimn:R10x}\\
&\mathbf{v}|_{t=0}=\mathbf{u}_0,\quad R|_{t=0}=Q_0,\label{nondim:inix}
\end{align}
where $\displaystyle{\Omega_\mathbf{v}:=\frac{\nabla \mathbf{v}-\nabla^T \mathbf{v}}{2}}$, $\displaystyle{D_\mathbf{v}:=\frac{\nabla \mathbf{v}+\nabla^T \mathbf{v}}{2}}$.
On the other hand, let $(\mathbf{u}^\eps,Q^\eps)$ be the strong solution to system \eqref{nondim:u1}--\eqref{nondim:Q1} starting from the initial data $(\mathbf{u}_0, Q_0)$ on $[0,T]$.
We assume in addition that
\be
\label{ass:linftybdeps}
\|Q^\eps\|_{L^\infty(0,T;\TT)}\le \eta,
\ee
for some positive constant $\eta$ independent of $\eps>0$.  Denote
$$\mathbf{w}^\eps=\mathbf{u}^\eps-\mathbf{v},\quad  S^\eps=Q^\eps-R.$$
Then for all $\eps\in (0,1)$ we have
\begin{align}
&\|\mathbf{w}^\eps(t)\|_{L^2}^2+\veps^2\|\nabla S^\eps(t)\|_{L^2}^2 +\veps\|S^\eps(t)\|_{L^2}^2, \leq C_T \veps^2, \quad \forall\, t\in [0,T],\non\\
& \int_0^T\left(\veps \|\nabla \mathbf{w}^\eps(t)\|_{L^2}^2 +\veps^3\|\Delta S^\eps(t)\|_{L^2}^2+ \veps^2\|\nabla S^\eps(t)\|_{L^2}^2\right)\,dt\leq C_T \veps^2.\non
\end{align}
\end{proposition}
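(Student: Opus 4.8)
The plan is to mirror the energy argument used for Theorem~\ref{thm:highEricksen}, but now keeping track of the extra terms generated by $\xi\neq 0$ and controlling them via the uniform bound \eqref{ass:linftybdeps}. First I would establish the needed regularity of the limit system \eqref{nondim:v10x}--\eqref{nondim:inix}: since $\mathbf{u}_0\in H^5$, the $2D$ Euler theory gives $\mathbf{v}\in L^\infty(0,T;H^5)$, and then the $R$-equation is a linear (in $R$, for the top-order terms) transport-type equation with smooth coefficients built from $\nabla\mathbf{v}$, so a Gronwall/energy estimate in $H^4$ yields $R\in L^\infty(0,T;H^4)$; in particular $\|R\|_{L^\infty}$, $\|\nabla R\|_{L^\infty}$, $\|\Delta R\|_{L^\infty}$, $\|D_\mathbf{v}\|_{L^\infty}$, $\|\Omega_\mathbf{v}\|_{L^\infty}$ and $\|\mathcal{F}_\veps(\mathbf{v},R)\|_{L^2}$ are bounded uniformly on $[0,T]$ for $\veps\in(0,1)$. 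I would record this (referring to the Appendix result) before starting the difference estimate.

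Next I would write the system satisfied by $(\mathbf{w}^\eps,S^\eps)=(\mathbf{u}^\eps-\mathbf{v},Q^\eps-R)$, which now contains, in addition to the terms appearing in \eqref{approxsystemdif}, the full quadratic-in-$Q$ stretching contributions $\xi\big(D(Q+\frac13\Id)+(Q+\frac13\Id)D-2(Q+\frac13\Id)\tr(Q\nabla\mathbf{u})\big)$ in the $Q$-equation and the matching anisotropic stress terms in the momentum equation. The key structural point, exactly as in the co-rotational case, is that these extra terms cancel at the energy level: testing the $Q$-equation against $-\veps^2\Delta S^\eps+\veps S^\eps$ and the momentum equation against $\mathbf{w}^\eps$ and adding, the cross terms coupling the $\xi$-stress in the Navier--Stokes part with the $\xi$-stretching in the $Q$-part cancel (this is the analogue of the identities $A_i=A_i'$, $I_i=0$, and uses \cite[Lemma 2.1]{ADL15}). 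What remains after these cancellations are: (i) the same terms estimated in \eqref{diff}; (ii) genuinely new terms where $D_\mathbf{v}$, $\Omega_\mathbf{v}$, or $\nabla\mathbf{v}$ multiply $S^\eps$ together with $Q^\eps$ or $R$ factors, and terms with $\nabla\mathbf{w}^\eps$ paired against expressions like $S^\eps(Q^\eps+\frac13\Id)$; (iii) terms of the form $\xi D_\mathbf{w}$ times $(R+\frac13\Id)$ against $S^\eps$ or $\Delta S^\eps$. Each of these is handled by H\"older and Young's inequality: the $\veps^2\Delta S^\eps$ and $\veps\nabla\mathbf{w}^\eps$ factors are absorbed into the good dissipation terms on the left (with small constants), the remaining factors are bounded by $\|Q^\eps\|_{L^\infty}\le\eta$ (this is exactly where \eqref{ass:linftybdeps} enters), by the uniform bounds on $R$ and $\mathbf{v}$, and by powers of $\veps$, at the cost of picking up $\|\mathbf{w}^\eps\|_{L^2}^2$, $\veps^2\|\nabla S^\eps\|_{L^2}^2$, $\veps\|S^\eps\|_{L^2}^2$ times an $L^\infty_t$-bounded coefficient, plus the forcing $\veps^2\|\mathcal{F}_\veps(\mathbf{v},R)\|_{L^2}^2+\veps^3\|\Delta R\|_{L^2}^2$.

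With $\mathcal{Y}(t)=\|\mathbf{w}^\eps\|_{L^2}^2+\veps^2\|\nabla S^\eps\|_{L^2}^2+\veps\|S^\eps\|_{L^2}^2$ and a suitable $\mathcal{H}(t)$ incorporating the extra $\|D_\mathbf{v}\|_{L^\infty}^2$, $\|\Omega_\mathbf{v}\|_{L^\infty}^2$ and $\eta$-dependent terms, one arrives at
\[
\frac{d}{dt}\mathcal{Y}(t)+\veps\|\nabla\mathbf{w}^\eps\|_{L^2}^2+\veps^3\|\Delta S^\eps\|_{L^2}^2+\veps^2\|\nabla S^\eps\|_{L^2}^2\le C\,\mathcal{H}(t)\,\mathcal{Y}(t)+C\veps^2\|\mathcal{F}_\veps(\mathbf{v},R)\|_{L^2}^2+C\veps^3\|\Delta R\|_{L^2}^2,
\]
with $\mathcal{H}\in L^\infty(0,T)$ and the forcing uniformly $O(\veps^2)$; since $\mathcal{Y}(0)=0$, Gronwall's lemma closes both claimed bounds. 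The main obstacle, and the part deserving the most care, is verifying the $\xi$-dependent energy cancellations precisely — tracking that every term produced by the stretching $S(\nabla\mathbf{u},Q)$ and by the corresponding anisotropic stress in \eqref{nondim:u1} either cancels in pairs or is lower order (involving $D_\mathbf{w}$ or $\Omega_\mathbf{w}$ against smooth $R,\mathbf{v}$ factors, hence absorbable); once this bookkeeping is done, the remaining estimates are routine and structurally identical to the proof of Theorem~\ref{thm:highEricksen}.
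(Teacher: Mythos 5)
Your proposal is correct and follows essentially the same route as the paper: establish $\mathbf{v}\in L^\infty(0,T;H^5)$ and $R\in L^\infty(0,T;H^4)$ for the limit system, write the difference system with all $\xi$-dependent terms, test against $(\mathbf{w}^\eps,-\veps^2\Delta S^\eps+\veps S^\eps)$, verify the pairwise cancellations between the anisotropic stress and the stretching terms (via \cite[Lemma 2.1]{ADL15}, incompressibility, and tracelessness), absorb the remaining terms using \eqref{ass:linftybdeps} together with the uniform bounds on $\mathbf{v}$ and $R$, and close with Gronwall from $\mathcal{Y}(0)=0$. The cancellation bookkeeping you flag as the delicate step is precisely what the paper's proof carries out in detail (the identities $A_i=A_i'$ for $i=1,\dots,6$ and $I_i=0$ for $i=1,\dots,7$).
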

\begin{proof}
The proof is basically an extension of the one for the co-rotational case in Section 4, which however is more involved since $\xi\neq 0$. The reader can safely skip to continue with the proof of Theorem~\ref{thm:lackofeigenv} in the next subsection.

 Like before, since $\mathbf{u}_0\in H^{5}$, thanks to the regularity theory for $2D$ Euler equation we have $\mathbf{v}\in L^\infty (0,T; H^{5})$ (cf. \cite{kato}).
 By standard results for transport equation on the torus with $\nabla \cdot \mathbf{v}=0$, due to the assumption on the initial datum $R_0=Q_0\in H^{4}$ and
 the aforementioned regularity of $\mathbf{v}$, we are able to deduce $R\in L^\infty(0,T;H^{4})$ (See Proposition~\ref{prop:globalexistlimitsyst}). We remark that here the fact $\xi\neq 0$ does not introduce any extra difficulty.

For any $\eps>0$, we denote $(\mathbf{u}^\eps, Q^\eps)$ the global strong solution to system \eqref{nondim:u1}--\eqref{nondim:Q1} with $\xi\in \mathbb{R}$, subject to the initial data $\mathbf{u}^\eps|_{t=0}=\mathbf{u}_0$ and $Q^\eps|_{t=0}=Q_0$.

Set
 $$\mathbf{w}^\eps=\mathbf{u}^\eps-\mathbf{v},\quad  S^\eps=Q^\eps-R.$$
We see that $(\mathbf{w}^\eps,S^\eps)$ satisfies the following system (dropping  the superscript $\eps$ for the sake of simplicity):
\begin{equation}
\left\{
   \begin{aligned}
    &\partial_t \mathbf{w}+\mathbf{w}\cdot\nabla \mathbf{w} -\veps\Delta \mathbf{w}+ \nabla(P-q)\\
       &\hspace{3ex} = -(\mathbf{v}\cdot\nabla \mathbf{w}+\mathbf{w}\cdot\nabla \mathbf{v})
         -\veps^2\nabla\cdot(\nabla S\odot\nabla S)
         +\veps^2\nabla\cdot\big(S\Delta S-(\Delta S) S\big)\\
       &\hspace{6ex} +\veps^2\nabla\cdot\big(S\Delta R+R\Delta S-(\Delta S) R-(\Delta R)S\big)
        -\veps^2\nabla\cdot\big(\nabla S\odot\nabla R+\nabla R\odot\nabla S\big)\\
       &\hspace{6ex} -\veps^2\xi\nabla\cdot\big((\Delta{S})S+S\Delta{S}+\frac23\Delta{S}\big)
         -\veps^2\xi\nabla\cdot\big(\Delta{S}R+R\Delta{S}\big)-\veps^2\xi\nabla\cdot\big(S\Delta{R}+\Delta{R}S\big) \\
       &\hspace{6ex}
       +\frac{2}{3}\veps^2\xi\nabla\cdot\big[( \textrm{tr}(S\Delta S)+\textrm{tr}(R\Delta S)+ \textrm{tr}(S\Delta R))\Id\big]\\
       &\hspace{6ex} +2\veps^2\xi\nabla\cdot\Big[S\,\textrm{tr}(S\Delta{S})+R\,\textrm{tr}(R\Delta{S})+S\,\textrm{tr}(R\Delta{S})
         +R\,\textrm{tr}(S\Delta{S})\Big]\\
       &\hspace{6ex}   +2\veps^2\xi\nabla\cdot\Big[S\,\textrm{tr}(S\Delta{R})+R\,\textrm{tr}(S\Delta{R})+S\,\textrm{tr}(R\Delta{R})\Big]\\
       &\hspace{6ex} +\veps\nabla\cdot\mathcal{G}(Q)+\veps\mathcal{F}_\veps(\mathbf{v},R),\\
       &\nabla \cdot\mathbf{w}=0,\\
       &\partial_t S+ \mathbf{w}\cdot \nabla S-\Omega_\mathbf{w} S +S\Omega_\mathbf{w}-\veps\Delta S\\
       &\hspace{3ex}= -(\mathbf{w}\cdot \nabla R+ \mathbf{v}\cdot \nabla S)+\Omega_\mathbf{w} R+\Omega_\mathbf{v} S -R\Omega_\mathbf{w} -S\Omega_\mathbf{v}\\
       &\hspace{6ex} -aS +b\big(SQ+RS-\frac{1}{3}\textrm{tr}(SQ+RS)\Id\big)
       -c\big[S\textrm{tr}(Q^2)+R\,\textrm{tr}(QS+SR)\big]\\
       &\hspace{6ex} +\xi\big(D_{\mathbf{w}}S+SD_{\mathbf{w}}+\frac23D_{\mathbf{w}}\big)+\xi(D_{\mathbf{w}}R+RD_{\mathbf{w}})+\xi(D_{\mathbf{v}}S+SD_{\mathbf{v}}) \\
       &\hspace{6ex} -\frac23\xi\big[\textrm{tr}(S\nabla\mathbf{w})+\textrm{tr}(S\nabla\mathbf{v})+\textrm{tr}(R\nabla\mathbf{w})\big]\Id
                     -2\xi\big[S\,\textrm{tr}(S\nabla{\mathbf{v}})+S\,\textrm{tr}(R\nabla{\mathbf{v}})+R\,\textrm{tr}(S\nabla{\mathbf{v}}) \big] \\
       &\hspace{6ex} -2\xi\big[S\,\textrm{tr}(S\nabla{\mathbf{w}})+S\,\textrm{tr}(R\nabla{\mathbf{w}})
                      +R\,\textrm{tr}(S\nabla{\mathbf{w}})+R\,\textrm{tr}(R\nabla{\mathbf{w}})\big]+\eps\Delta R,\\
       &\mathbf{w}|_{t=0}=\mathbf{0},\quad S|_{t=0}=0.
       \end{aligned}\right.
      \label{approxsystemdifa}
\end{equation}
  where $\Omega_\mathbf{w}:=\displaystyle{\frac{\nabla \mathbf{w}-\nabla^T \mathbf{w}}{2}}$, $D_\mathbf{w}:=\displaystyle{\frac{\nabla \mathbf{w}+\nabla^T \mathbf{w}}{2}}$  and
\begin{align*}
\mathcal{G}(Q)
  &:=-2\kappa\xi\big(Q+\frac{1}{3}\Id\big)\big(a\textrm{tr}(Q^2)-b\tr(Q^3)+c\textrm{tr}^2(Q^2)\big)\non\\
  &\qquad +2\kappa\xi\big(Q+\frac{1}{3}\Id\big)\big(aQ-b(Q^2-\frac{1}{3}\tr(Q^2)\Id)+cQ\tr(Q^2)\big),\\
\mathcal{F}_\veps(\mathbf{v},R)
  &:=\Delta \mathbf{v}-\veps\nabla\cdot\big(\nabla R\odot \nabla R+\Delta R  R-R\Delta{R}\big)\non\\
  &\qquad -\veps\xi\nabla\cdot\Big((\Delta{R})R+R\Delta{R}+\frac{2}{3}\Delta R-2(R+\frac{1}{3}\Id)(R:\Delta{R}) \Big).
\end{align*}
 We multiply the third equation in \eqref{approxsystemdifa} for $S$ by $-\veps^2\Delta S+\veps S$, integrate over $\TT$, take the trace and sum with the first equation in \eqref{approxsystemdifa} multiplied by $\mathbf{w}$
 and integrated over $\TT$.
 After integration by parts and taking into account the cancellations at the energy level, we obtain
 \begin{align*}
 &\frac{d}{dt}\int_{\TT}\left(\frac{1}{2}|\mathbf{w}|^2
 +\frac{\veps^2}{2}|\nabla S|^2+\frac{\veps}{2}|S|^2\right)\,dx
 +\int_{\TT} \Big(\veps |\nabla \mathbf{w}|^2+\veps^3 |\Delta S|^2
 + \veps^2|\nabla S|^2\Big) \,dx\\
 &= \underbrace{\veps^2\int_{\TT}\textrm{tr}\left([\mathbf{w}\cdot\nabla S]\Delta S\right)\,dx}_{A_1}
          +\underbrace{\veps^2\int_{\TT}\textrm{tr}\left([S\Omega_\mathbf{w}-\Omega_\mathbf{w} S]\Delta S\right)}_{A_2}\,dx
          +\veps^2\int_{\TT}\textrm{tr}\left([\mathbf{w}\cdot\nabla R]\Delta{S}\right)\,dx \\
 &\quad   + \veps^2\int_{\TT} \textrm{tr}\left([\mathbf{v}\cdot\nabla S]\Delta S\right)\,dx
          + \veps^2\int_{\TT}\textrm{tr}\left([S\Omega_\mathbf{v}-\Omega_\mathbf{v}S]\Delta{S}\right)\,dx
          \\
 &\quad +\underbrace{\veps^2\int_{\TT}\textrm{tr}\left([R\Omega_\mathbf{w}-\Omega_\mathbf{w} R]\Delta S\right)\,dx}_{A_3} -\underbrace{\veps^2\xi\int_{\TT}\textrm{tr}\Big(\big(D_{\mathbf{w}}S+SD_{\mathbf{w}}+\frac23D_{\mathbf{w}}\big)\Delta{S}\Big)\,dx}_{A_4}
          \\
 &\quad   -\underbrace{\veps^2\xi\int_{\TT}\textrm{tr}\big([D_{\mathbf{w}}R+RD_{\mathbf{w}}]\Delta{S}\big)\,dx}_{A_5}
 -\veps^2\xi\int_{\TT}\textrm{tr}\big([D_{\mathbf{v}}S+SD_{\mathbf{v}}]\Delta{S}\big)\,dx\\
 &\quad
          +\underbrace{\frac23\veps^2\xi\int_{\TT}\big[\textrm{tr}(S\nabla\mathbf{w})+\textrm{tr}(S\nabla\mathbf{v})+\textrm{tr}(R\nabla\mathbf{w})\big]\mathrm{tr}(\Delta{S})\,dx}_{I_1}\\
 &\quad   +2\veps^2\xi\int_{\TT}\textrm{tr}\Big(\big[S\,\textrm{tr}(S\nabla{\mathbf{v}})+S\,\textrm{tr}(R\nabla{\mathbf{v}})+R\,\textrm{tr}(S\nabla{\mathbf{v}})\big]\Delta{S}\Big)\,dx\\
 &\quad  +\underbrace{2\veps^2\xi\int_{\TT}\textrm{tr}\Big(\big[S\,\textrm{tr}(S\nabla{\mathbf{w}})
             +S\,\textrm{tr}(R\nabla{\mathbf{w}})+R\,\textrm{tr}(S\nabla{\mathbf{w}})+R\,\textrm{tr}(R\nabla{\mathbf{w}})\big]\Delta{S}\Big)\,dx}_{A_6}\\
 &\quad  -\underbrace{\veps\int_{\TT} \textrm{tr}([\mathbf{w}\cdot\nabla S]S)\,dx
         -\veps\int_{\TT} \textrm{tr}([S\Omega_\mathbf{w}-\Omega_\mathbf{w} S]S)\,dx}_{I_2}
         -\veps\int_{\TT}\textrm{tr}([\mathbf{w}\cdot\nabla R] S)\,dx \\
 &\quad -\underbrace{\veps\int_{\TT} \textrm{tr}([\mathbf{v}\cdot\nabla S]S)\,dx}_{I_3}
         +\veps\int_{\TT}\textrm{tr}([\Omega_\mathbf{w}R-R\Omega_\mathbf{w}] S)\,dx
         +\underbrace{\veps\int_{\TT}\textrm{tr}([\Omega_\mathbf{v} S-S\Omega_\mathbf{v}]S)\,dx}_{I_4} \\
 &\quad  -\underbrace{\int_{\TT}(\mathbf{w}\cdot\nabla \mathbf{w})\cdot \mathbf{w}\, dx
         -\int_{\TT}\nabla(P-q)\cdot \mathbf{w}\, dx}_{I_5}
         -\underbrace{\int_{\TT}(\mathbf{v}\cdot\nabla \mathbf{w})\cdot \mathbf{w}\,dx}_{I_6}
         -\int_{\TT}(\mathbf{w}\cdot\nabla \mathbf{v})\cdot \mathbf{w}\,dx\\
 &\quad  -\underbrace{\veps^2\int_{\TT}[\nabla\cdot(\nabla S\odot\nabla S)]\cdot \mathbf{w}\, dx}_{A_1'}
         -\underbrace{\veps^2\int_{\TT}\textrm{tr}([S\Delta S-(\Delta S) S]\nabla \mathbf{w})\, dx}_{A_2'}\\
 &\quad  -\veps^2\int_{\TT} \textrm{tr}([S\Delta R-(\Delta R) S]\nabla \mathbf{w})\,dx
         -\underbrace{\veps^2\int_{\TT} \textrm{tr}([R\Delta S-(\Delta S) R]\nabla \mathbf{w})\,dx}_{A_3'} \\
 &\quad  +\veps^2\int_{\TT}\textrm{tr}([\nabla S\odot\nabla R+\nabla R\odot\nabla S]\nabla \mathbf{w})\,dx
         +\underbrace{\veps^2\xi\int_{\TT}\mathrm{tr}\Big(\big[(\Delta{S}) S+S\Delta{S}+\frac23\Delta{S}\big]\nabla\mathbf{w}\Big)\,dx}_{A_4'}
 \\
 &\quad +\underbrace{\veps^2\xi\int_{\TT}\mathrm{tr}\Big([(\Delta{S})R+R\Delta{S}]\nabla\mathbf{w}\Big)\,dx}_{A_5'}
        +\veps^2\xi\int_{\TT}\mathrm{tr}\Big([(\Delta{R})S+S\Delta{R}]\nabla\mathbf{w}\Big)\,dx \\
 &\quad -2\veps^2\xi\int_{\TT}\mathrm{tr}\Big(\big[S\,\textrm{tr}(S\Delta{R})+R\,\textrm{tr}(S\Delta{R})+S\,\textrm{tr}(R\Delta{R})\big]\nabla\mathbf{w}\Big)\,dx \\
 &\quad -\underbrace{2\veps^2\xi\int_{\TT}\mathrm{tr}\Big(\big[S\,\textrm{tr}(S\Delta{S})+R\,\textrm{tr}(R\Delta{S})+S\,\textrm{tr}(R\Delta{S})
         +R\,\textrm{tr}(S\Delta{S})\big]\nabla\mathbf{w}\Big)\,dx}_{A_6'}\\
 &\quad  -\veps a \int_{\TT}\mathrm{tr}(S^2)\,dx
         +\veps b \int_{\TT}\textrm{tr}\big(S QS+R S^2\big)\,dx
         -\veps c\int_{\TT}\textrm{tr}(Q^2)\textrm{tr}(S^2)\,dx \\
  &\quad -\veps c\int_{\TT}\textrm{tr}(R S)\textrm{tr}(QS+S R)\,dx
         +\veps\xi\int_{\TT}\textrm{tr}\Big(\big[D_{\mathbf{w}}S+SD_{\mathbf{w}}+\frac23D_{\mathbf{w}}\big]S\Big)\,dx\\
 &\quad  +\veps\xi\int_{\TT}\textrm{tr}\big([D_{\mathbf{v}}S+SD_{\mathbf{v}}+D_{\mathbf{w}}R+RD_{\mathbf{w}}]S\big)\,dx \\
 &\quad  -\underbrace{\frac23\veps\xi\int_{\TT}\big[\textrm{tr}(S\nabla\mathbf{w})+\textrm{tr}(S\nabla\mathbf{v})+\textrm{tr}(R\nabla\mathbf{w})\big]\mathrm{tr}(S)\,dx}_{I_7}       \\
 &\quad -2\veps\xi\int_{\TT}\textrm{tr}\big(\big[S\,\textrm{tr}(S\nabla{\mathbf{w}})+S\,\textrm{tr}(R\nabla{\mathbf{w}})
         +R\,\textrm{tr}(S\nabla{\mathbf{w}})+R\,\textrm{tr}(R\nabla{\mathbf{w}})\big]S\big)\,dx \\
 &\quad  -2\veps\xi\int_{\TT}\textrm{tr}\Big(\big[S\,\textrm{tr}(S\nabla{\mathbf{v}})+S\,\textrm{tr}(R\nabla{\mathbf{v}})+R\,\textrm{tr}(S\nabla{\mathbf{v}})\big]S\Big)\,dx \\
 &\quad  +\veps^2 a \int_{\TT}|\nabla S|^2\,dx
         -\veps^2 b \int_{\TT}\textrm{tr}\left([S Q +R S]\Delta S\right)\,dx
         +\veps^2 c \int_{\TT}\textrm{tr}\big(S\Delta S\big)\textrm{tr}(Q^2)\,dx\\
 &\quad +\veps^2 c\int_{\TT}\textrm{tr}(R\Delta S)\textrm{tr}(QS+S R)\,dx-\veps\int_{\TT}\textrm{tr}(\mathcal{G}(Q) \nabla\mathbf{w})\, dx\\
 &\quad +\veps\int_{\TT}\mathcal{F}_\veps(\mathbf{v},R)\cdot \mathbf{w}\, dx+\veps\int_{\T^2}\textrm{tr}(\Delta R(-\eps^2\Delta S+\eps S))\,dx.
 \end{align*}
Again using \cite[Lemma 2.1]{ADL15}, the incompressibility condition and the fact $\mathbf{v}, \mathbf{w}\in \mathcal{S}^{(3)}_0$, one can verify that
$$
  A_1=A_1',\quad A_2=A_2',\quad A_3=A_3', \quad A_4=A_4',\quad A_5=A_5',\quad A_6=A_6',
$$
as well as
$$
 I_1=I_2=I_3=I_4=I_5=I_6=I_7=0.
$$
Then using the H\"older inequality and Young' inequality, we can estimate the remaining terms on the right-hand side as follows:
\begin{align}
&\frac12 \frac{d}{dt}\int_{\TT}\Big(|\mathbf{w}|^2+\veps^2|\nabla
S|^2 +\veps|S|^2\Big)\,dx
+\int_{\TT}\Big(\veps |\nabla \mathbf{w}|^2 +\veps^3|\Delta S|^2+ \veps^2|\nabla S|^2\Big)\,dx\non\\
&\le \frac{\veps^3}{16}\|\Delta S\|_{L^2}^2
           +C\veps \|\nabla R\|_{L^\infty}^2\|\mathbf{w}\|_{L^2}^2
           +\veps^2 \|\nabla \mathbf{v}\|_{L^\infty} \|\nabla S\|_{L^2}^2
           +C\veps \|\nabla \mathbf{v}\|_{L^\infty}^2\|S\|_{L^2}^2\non\\
&\quad +\frac{\veps^3}{16}\|\Delta{S}\|_{L^2}^2
       +C\veps\xi^2\|\nabla\mathbf{v}\|_{L^\infty}^2\|S\|_{L^2}^2
       +\frac{\veps^3}{16}\|\Delta{S}\|_{L^2}^2
       +C\veps\xi^2\|\nabla \mathbf{v}\|_{L^\infty}^2(\|Q\|_{L^\infty}^2+\|R\|_{L^\infty}^2)\|S\|_{L^2}^2 \non\\
&\quad  +\|\mathbf{w}\|_{L^2}^2 +\veps^2\|\nabla R\|_{L^\infty}^2\|S\|_{L^2}^2
        +\frac{\veps}{16}\|\nabla \mathbf{w}\|_{L^2}^2
        +\veps \|R\|_{L^\infty}^2\|S\|_{L^2}^2
        +\|\nabla \mathbf{v}\|_{L^\infty}\|\mathbf{w}\|_{L^2}^2\non\\
&\quad  +\frac{\veps}{16}\|\nabla \mathbf{w}\|_{L^2}^2
        + C\veps^3 \|\Delta R\|_{L^\infty}^2 \|S\|_{L^2}^2+  C\veps^3  \|\nabla R\|_{L^\infty}^2 \|\nabla S\|_{L^2}^2 +\frac{\veps}{16}\|\nabla\mathbf{w}\|_{L^2}^2\non\\
&\quad  +C\veps^3\xi^2\|\Delta{R}\|_{L^\infty}^2\|S\|_{L^2}^2
        +\frac{\veps}{16}\|\nabla\mathbf{w}\|_{L^2}^2
        +C\veps^3\xi^2(\|Q\|_{L^\infty}^2+\|R\|_{L^\infty}^2)\|\Delta{R}\|_{L^\infty}^2\|S\|_{L^2}^2       \non\\
&\quad  +C\veps(1+\|Q\|_{L^\infty}+\|R\|_{L^\infty}+\|Q\|^2_{L^\infty}+\|R\|_{L^\infty}\|Q\|_{L^\infty}+\|R\|^2_{L^\infty})\|S\|_{L^2}^2\non\\
&\quad  +\frac{\veps}{16}\|\nabla\mathbf{w}\|_{L^2}^2
        +C\veps\xi^2(1+\|Q\|_{L^\infty}^2+\|R\|_{L^\infty}^2)\|S\|_{L^2}^2
        +C\veps\xi\|\nabla\mathbf{v}\|_{L^\infty}\|S\|_{L^2}^2\non\\
&\quad
      +\frac{\veps}{16}\|\nabla\mathbf{w}\|_{L^2}^2
      +C\veps\xi\|R\|_{L^\infty}^2\|S\|_{L^2}^2
      +\frac{\veps}{16}\|\nabla\mathbf{w}\|_{L^2}^2
      +C\veps\xi^2(\|Q\|_{L^\infty}^4+\|R\|_{L^\infty}^4)\|S\|_{L^2}^2
      \non\\
&\quad
     +C\veps|\xi|\|\nabla\mathbf{v}\|_{L^\infty}^2\|S\|_{L^2}^2
     +C\veps|\xi|(\|Q\|_{L^\infty}^2+\|R\|_{L^\infty}^2)\|S\|_{L^2}^2
     +|a|\eps^2\|\nabla S\|_{L^2}^2  \non\\
&\quad +\frac{\veps^3}{16}\|\Delta S\|_{L^2}^2
       +C\veps(\|Q\|_{L^\infty}^2+\|R\|_{L^\infty}^2+\|Q\|_{L^\infty}^4+\|R\|_{L^\infty}^4)\|S\|_{L^2}^2\non\\
&\quad + \frac{\veps}{16}\|\nabla\mathbf{w}\|_{L^2}^2+ C\eps \|\mathcal{G}(Q)\|_{L^2}^2
       +\veps^2\|\mathcal{F}_\veps(\mathbf{v},R)\|_{L^2}^2+\|\mathbf{w}\|_{L^2}^2\non\\
       &\quad +\frac{\veps^3}{16}\|\Delta S\|_{L^2}^2+\eps\|S\|_{L^2}^2+C\eps^3\|\Delta R\|_{L^2}^2,
\label{diffx}
\end{align}
where the constant $C$ is independent of $\eps$ and $\xi$.

Let
\begin{align}
&\mathcal{Y}_1(t)=\|\mathbf{w}\|_{L^2}^2+\veps^2\|\nabla S\|_{L^2}^2 +\veps\|S\|_{L^2}^2,\non\\
&\mathcal{H}_1(t)= 1+ \|\nabla \mathbf{v}\|_{L^\infty}^4
+(\veps+\veps^2)(\|\nabla R\|_{L^\infty}^2+\|\Delta R\|_{L^\infty}^2+\|\Delta R\|_{L^\infty}^4) \non\\
&\qquad \quad +(1+\veps^2)(\|R\|_{L^\infty}^4 +\|Q\|^4_{L^\infty}).\non
\end{align}
From the definition of $\mathcal{G}(Q)$, it easily follows that $\|\mathcal{G}(Q)\|_{L^2}\leq C(1+\|Q\|^4_{L^\infty})$ with some $C$ independent of $\eps$.
Then using the Cauchy--Schwarz inequality, we infer from \eqref{diffx} that
\begin{align}
&\frac{d}{dt}\mathcal{Y}_1(t)+\veps \|\nabla \mathbf{w}\|_{L^2}^2 +\veps^3\|\Delta S\|_{L^2}^2+ \veps^2\|\nabla S\|_{L^2}^2\non\\
&\quad \leq
C\mathcal{H}_1(t)\mathcal{Y}_1(t)+C\veps(1+\|Q\|_{L^\infty}^4)^2+\eps^2\|\mathcal{F}_\veps(\mathbf{v},R)\|_{L^2}^2+ C\eps^3\|\Delta R\|_{L^2}^2,\non
\end{align}
where $C$ is a constant that may depend on $\xi$ but is independent of $\veps$.

Since $\mathcal{H}_1(t)$,  $\|\mathcal{F}_\veps(\mathbf{v},R)\|_{L^2}$ are a priori bounded (in particular $\|Q\|_{L^\infty}$ is now assumed to be bounded), from the fact $\mathcal{Y}_1(0)=0$ and Gronwall's lemma, we deduce that for $0< \veps< 1$, it holds
\begin{align}
&\mathcal{Y}_1(t) \leq C_T \veps^2, \quad \forall\, t\in [0,T],\non\\
& \int_0^T\left(\veps \|\nabla \mathbf{w}\|_{L^2}^2 +\veps^3\|\Delta S\|_{L^2}^2+ \veps^2\|\nabla S\|_{L^2}^2\right)dt\leq C_T \veps^2.\non
\end{align}

The proof is complete.
\end{proof}

\subsection{Proof of Theorem~\ref{thm:lackofeigenv}}

We are in a position to prove Theorem~\ref{thm:lackofeigenv}. This can be done by a contradiction argument.
To this end, we assume that if
\begin{equation}\label{initial setting+}
\mathrm{\lambda}_i(Q_0(x))\in\left[-\dfrac{b+\sqrt{b^2-24ac}}{12c},\ \ \dfrac{b+\sqrt{b^2-24ac}}{6c}\right],
\qquad \forall\, x\in\TT,\ \ 1\leq i\leq 3,
\end{equation}
then for\emph{ any} $\xi\not=0$,  $\veps>0$, $a\in\R$, $b> 0$, $c>0$ and any $t>0$, we still have the eigenvalue-range preservation:
 \begin{equation}\label{initial setting+a}
\mathrm{\lambda}_i(Q^\eps (x,t))\in\left[-\dfrac{b+\sqrt{b^2-24ac}}{12c},\ \ \dfrac{b+\sqrt{b^2-24ac}}{6c}\right],
\qquad \forall\, x\in\TT,\ \ 1\leq i\leq 3.
\end{equation}

Since the physicality of a
$Q$-tensor imposes an upper bound on the size of $\|Q\|_{L^\infty}$, then thanks to the bound on eigenvalues \eqref{initial setting+a}, we see that \eqref{ass:linftybdeps} is satisfied with certain $\eta>0$. As a consequence, from Proposition~\ref{thm:highEricksen2} we infer that the limit system as $\veps\to 0$ i.e., \eqref{nondim:v10x}--\eqref{nondim:inix},  also has the eigenvalues in the same range (because pointwise convergence of $Q$-tensors implies pointwise convergence for their eigenvalues, as eigenvalues are continuous functions of matrices, see for instance \cite{nomizu}).

Next, we recall the $R$ equation
\begin{align}
&\delt R+\mathbf{v}\cdot\nabla R=(\xi D_{\mathbf{v}}+ \Omega_{\mathbf{v}})\big( R+\frac13 \Id
\big)+\big(R+\frac13 \Id \big)(\xi
D_{\mathbf{v}}- \Omega_{\mathbf{v}}) -2\xi\big(R+\frac13 \Id \big)\tr(R\nabla \mathbf{v})\non\\
&\qquad \qquad \qquad \qquad -aR+b\big[R^2-\frac{1}{3}\tr(R^2)\Id\big]- cR\tr(R^2),
\end{align}
where $\mathbf{v}$ is a solution of the Euler equation \eqref{nondim:v10x}. Since $\mathbf{u}_0\in H^{5}$ and $Q_0\in H^4$, we know that $\mathbf{v}\in L^\infty(0,T;H^{5}(\TT;\RR^3))$ and $R\in L^\infty(0,T; H^{4}(\TT;\sS^{(3)}_0))$.
Now we define
$$\mathbf{v}^\xi(x,t):=\mathbf{v}\Big(x, \frac{t}{\xi}\Big),\quad \Rx (x,t):=R\Big(x,\frac{t}{\xi}\Big), $$
and then the pair
$(\mathbf{v}^\xi, R^\xi)$ satisfies the following system
\begin{align}
&\delt \mathbf{v}^\xi+\frac{1}{\xi}\mathbf{v}^\xi\cdot\nabla \mathbf{v}^\xi+\frac{1}{\xi}\nabla q(x, \frac{t}{\xi})=0,\non\\
&\nabla\cdot \mathbf{v}^\xi=0,\non\\
&\delt \Rx+ \frac{1}{\xi}\mathbf{v}^\xi\cdot\nabla \Rx
=(D_\mathbf{\vx}+ \frac{1}{\xi}\Omega_\mathbf{\vx})\big( \Rx+\frac13 \Id
\big)+\big(\Rx+\frac13 \Id \big)(
D_\mathbf{\vx}- \frac{1}{\xi}\Omega_\mathbf{\vx})-2\big( \Rx+\frac13 \Id \big)\tr(\Rx \nabla\mathbf{\vx})\non\\
&\qquad \qquad\qquad\qquad\quad
-\frac{1}{\xi}\Big( a\Rx-b\big[(\Rx)^2-\frac{1}{3}\mathrm{tr}((\Rx)^2)\Id\big]+ c\Rx\mathrm{tr}((\Rx)^2)\Big).\non
\end{align}
Applying the curl operator to the first equation, we see that $\omega_\xi:=\mathrm{curl} \mathbf{v}^\xi$ satisfies
\be
\delt \omega_\xi+\frac{1}{\xi}\mathbf{v}^\xi\cdot\nabla\omega_\xi=\frac{1}{\xi}\omega_\xi\cdot \nabla \mathbf{v}^\xi.\non
\ee
Since $\mathbf{v}\in L^\infty(0,T;H^{5}(\TT; \RR^3))$, we infer that there exists certain constant $C>0$ independent of $\xi$ such that  $\|\omega_\xi\|_{L^\infty(0,T; H^{4}(\TT;\R^3))}\le C$, provided that $\xi\ge 1$.
Hence, we deduce that as $\xi\to\infty$
\be
\mathbf{v}^\xi\to \mathbf{u}_0\,\textrm{ in } L^\infty(0,T;H^4(\TT)),\non
\ee
where $\mathbf{u}_0$ is the initial data for the velocity field.

Moreover, we have  $\Rx (x, \cdot)\to \Ro(x,t)$ as $\xi\to\infty$ uniformly for $t\in [0,T]$ and $x\in \TT$,
where $\Ro(x,\cdot)$ solves the ODE system (parametrized by $x$):
\be
\delt \Ro = D_{\mathbf{u}_0}\Ro+\Ro
D_{\mathbf{u}_0}+\frac{2}{3}D_{\mathbf{u}_0}-2\big( \Ro+\frac13 \Id \big)\tr(\Ro D_{\mathbf{u}_0}),\non
\ee
where $\displaystyle{D_{\mathbf{u}_0}:=\frac{\nabla \mathbf{u}_0+\nabla^T \mathbf{u}_0}{2}}$.
If we take the initial datum $R_0|_{t=0}=0$, we will have, as long as the matrix $D_{\mathbf{u}_0}\not\equiv 0$ then the solution $R_0(t)\not\equiv 0$, namely,
 there exists  some $t_0\in [0,T]$ such that $R_0(t_0)\not=0$.

As a consequence,  there exists a  sufficiently small $\lambda>0$ (depending on this nonzero $R_0(t_0)$) such that if we take the coefficients
$$a=\lambda a_0,\quad b=\sqrt{\lambda}b_0> 0,\quad  c=c_0>0$$
that  satisfy
$$|\lambda a_0|<  \frac{\lambda b_0^2}{3c_0},$$
we have
$$\mathrm{\lambda}_i(R_0(0))=0\in\left[-\dfrac{\sqrt{\lambda} b_0+\sqrt{\lambda b_0^2-24\lambda a_0c_0}}{12c_0},\ \ \dfrac{\sqrt{\lambda} b_0+\sqrt{\lambda b_0^2-24\lambda a_0c_0}}{6c_0}\right],$$
but for  $t_0\in [0,T]$, it holds
$$\mathrm{\lambda}_i(R_0(t_0))\not\in\left[-\dfrac{\sqrt{\lambda} b_0+\sqrt{\lambda b_0^2-24\lambda a_0c_0}}{12c_0},\ \ \dfrac{\sqrt{\lambda} b_0+\sqrt{\lambda b_0^2-24\lambda a_0c_0}}{6c_0}\right].$$
This leads to a contradiction, which proves our assertion.

The proof of Theorem~\ref{thm:lackofeigenv} is complete.\hfill$\Box$

\section{Dynamical emergence of defects}
\label{sec:defectsdyn}
\setcounter{equation}{0}
In this section, we investigate some qualitative features of solutions to the system (i.e., the ``limit system" of \eqref{nondim:u1}--\eqref{nondim:Q1} with $\xi=0$):
\begin{align}
& \delt \mathbf{v}+ \mathbf{v}\cdot\nabla\mathbf{v}+\nabla q =0,\label{eq:limEuler}\\
&\nabla\cdot \mathbf{v} =0,\\
& \delt R+ \mathbf{v}\cdot\nabla R-\Omega_\mathbf{v} R+ R\Omega_\mathbf{v} =-aR+b\big[R^2-\frac{1}{3}\tr(R^2)\Id\big]- cR\tr(R^2),
\label{eq:limitTrans}
\end{align}
Our aim is to understand how it can describe liquid crystal defects that are understood as high gradients. We shall provide some specific examples of flows providing mechanisms responsible for this phenomenon, which can be regarded as the generation of defect patterns.

As has been seen in the previous sections, the equation for $R$ is well-posed in the spaces in which we work, so if one starts with
 $(\mathbf{v}_0,R_0)\in H^{5}(\TT)\times H^{4}(\TT)$ then $(\mathbf{v},R)(\cdot,t)\in H^{5}(\TT)\times H^{4}(\TT)$ for
 $t>0$ (see for instance, Proposition~\ref{prop:globalexistlimitsyst}  in the Appendix).
 However, what can happen is that for certain types of initial data the $L^\infty$-norm of the gradients of $R$ can increase in time on the interval $[0,T]$.
Below we present two distinct ways of generating defects:
\begin{itemize}
\item {\it  phases mismatch};
\item {\it vorticity-driven defects}.
\end{itemize}

\subsection{\bf The phases mismatch}

We first consider the case when the flow is not present, namely, $\mathbf{v}\equiv 0$.
Then the equation \eqref{eq:limitTrans} reduces to the gradient flow of an ODE system:
\be\label{ode:gradflow}
\frac{d}{dt}R=-aR+b\big[R^2-\frac{1}{3}\tr(R^2)\Id\big]- cR\tr(R^2),
\ee
whose solutions evolve in time towards one of the two stable minima, namely, the local minimum $\left\{s_-\left(\mathbf{n}\otimes \mathbf{n}-\frac{1}{3}\Id\right),\ \mathbf{n}\in\mathbb{S}^2\right\}$, respectively the global minimum $\left\{s_+(\mathbf{n}\otimes \mathbf{n}-\frac{1}{3}\Id),\ \mathbf{n}\in\mathbb{S}^2\right\}$.
Here,
$$s_-=\frac{b-\sqrt{b^2-24ac}}{4c}<0<s_+=\frac{b+\sqrt{b^2-24ac}}{4c},$$
with  $a<0$, $b,c>0$. 
More specifically, one can easily check that if we take for instance
$$
R_0(x_1,x_2)=x_1e^{-|x_1|^2}\left(e_1\otimes e_1-\frac{1}{3}\Id\right),\, \quad  x\in\TT,
$$
where $e_1=(1,0,0)$,
then the solution of \eqref{eq:limitTrans} will be $R(x_1,x_2,t):=s(x,t)\left(e_1\otimes e_1-\frac{1}{3}\Id\right)$ with
\begin{equation}
s(x,t)\to
\left\{
\begin{aligned}
s_-, &\quad  \textrm{ if }x_1<0,\\
s_+, &\quad  \textrm{ if }x_1>0,
\end{aligned}
\right.
\quad
\textrm{ as } t\to\infty.\non
\end{equation}
Therefore, as $t\to\infty$ the gradient of $R$ at the plane $x_1=0$ will increase towards infinity as $t\to\infty$.

\subsection{\bf Vorticity-driven defects}

Given the roughness specific to Euler equation, one can expect that defects are related to discontinuities and high gradients in the flow.
However, this is not the only possible mechanism. Indeed, we will show that even for a very well-behaved flow, e.g.,  a stationary flow we are able to get for generic points that the gradients of the $Q$-tensor will increase in time.

Below we  consider two cases that are in some sense extreme: (1) a very general flow, but very specific initial data, and  (2) a very general initial datum, but with more restrictions on the flow.

\subsubsection{\bf General flow and special initial data}
We have the following:

\begin{proposition}\label{prop:specdata}
Let $\mathbf{v}:\T^2\to \R^2$  be a $C^2$ stationary solution of the incompressible Euler system in $2D$.
We denote by $\omega$ its vorticity, namely $\omega:=\partial_x v_{2}-\partial_y v_{1}$ and assume that there exists a sequence $\{(x_k,y_k)\}_{k\in \N}\subset\TT$ with $(x_k,y_k)\to (\bar x,\bar y)\in\TT$ as $k\to\infty$ and $\omega(x_k,y_k)-\omega(\bar x,\bar y)\not=0$, for all $k\in \N$.
We further assume that $a<0$, $b,c>0$ and let $Q_0(x,y)\displaystyle{=s_+\Big(e_1\otimes e_1-\frac{1}{3}\Id\Big)}$, with $s_+=\displaystyle{\frac{b+\sqrt{b^2-24ac}}{4c}}$ and $e_1=(1,0,0)$.

Then, denoting $\mathbf{v}:=(v_1,v_2,0)(x,y)$, and letting $Q_0$ be an initial datum for equation \eqref{eq:limitTrans} with the given flow $\mathbf{v}$,  we see that there exists a sequence of times $t_k\to\infty$ and points $(\tilde x_k, \tilde y_k)$ (with $(\tilde x_k,\tilde y_k)$ in the segment connecting $(x_k,y_k)$ and $(\bar x, \bar y)$) such that
 $$|\nabla R(\tilde x_k,\tilde y_k, t_k)|\to\infty,\quad \text{as }k\to \infty.$$
\end{proposition}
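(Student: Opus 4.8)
The plan is to show that, for the special initial datum $Q_0$, the solution of \eqref{eq:limitTrans} stays \emph{uniaxial} for all time with the \emph{fixed} scalar order parameter $s_+$, and that its director is merely rotated, pointwise, at an angular speed proportional to the vorticity $\omega$ evaluated at that point. Since $\omega$ is non-constant near $(\bar x,\bar y)$, the directors at nearby points drift out of phase at a rate that persists in time; this dephasing forces the spatial gradient of $R$ to blow up at suitably chosen times, at a point squeezed between $(x_k,y_k)$ and $(\bar x,\bar y)$.

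\emph{Step 1: the uniaxial ansatz.} I would look for a solution of the form $R(\cdot,t)=s_+\big(\mathbf{n}\otimes\mathbf{n}-\tfrac13\Id\big)$ with $|\mathbf{n}|=1$. A direct computation shows that the bulk right-hand side of \eqref{eq:limitTrans} vanishes identically for such $R$, precisely because $s_+$ solves $2cs^2-bs+3a=0$ (note $b^2-24ac>b^2>0$ since $a<0<c$, so $s_+$ is real and positive). Because $\mathbf{v}=(v_1,v_2,0)$ depends only on $(x,y)$, one has $\Omega_{\mathbf{v}}=\tfrac{\omega}{2}J$ where $J:=e_2\otimes e_1-e_1\otimes e_2$ is the (fixed) generator of planar rotations and $\omega=\partial_x v_2-\partial_y v_1$. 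Using $\Omega_{\mathbf{v}}(\mathbf{n}\otimes\mathbf{n})-(\mathbf{n}\otimes\mathbf{n})\Omega_{\mathbf{v}}=(\Omega_{\mathbf{v}}\mathbf{n})\otimes\mathbf{n}+\mathbf{n}\otimes(\Omega_{\mathbf{v}}\mathbf{n})$, equation \eqref{eq:limitTrans} collapses to $\mathbf{a}\otimes\mathbf{n}+\mathbf{n}\otimes\mathbf{a}=0$ with $\mathbf{a}:=\partial_t\mathbf{n}+\mathbf{v}\cdot\nabla\mathbf{n}-\Omega_{\mathbf{v}}\mathbf{n}$; taking the trace and then applying the tensor to $\mathbf{n}$ gives $\mathbf{a}=0$ whenever $|\mathbf{n}|=1$. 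Conversely, if $\mathbf{n}$ solves $\partial_t\mathbf{n}+\mathbf{v}\cdot\nabla\mathbf{n}=\Omega_{\mathbf{v}}\mathbf{n}$ with $|\mathbf{n}(\cdot,0)|=1$, then $(\partial_t+\mathbf{v}\cdot\nabla)|\mathbf{n}|^2=2\,\mathbf{n}\cdot\Omega_{\mathbf{v}}\mathbf{n}=0$, so $|\mathbf{n}|\equiv1$ and the ansatz is a genuine solution; by uniqueness for \eqref{eq:limitTrans} with $\mathbf{v}$ prescribed and $C^2$ (cf. Proposition~\ref{prop:globalexistlimitsyst}), this \emph{is} the solution issuing from $Q_0$.

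\emph{Step 2: explicit director via stationarity.} Next I would solve $\partial_t\mathbf{n}+\mathbf{v}\cdot\nabla\mathbf{n}=\tfrac{\omega}{2}J\mathbf{n}$, $\mathbf{n}(\cdot,0)=e_1$, along the trajectories of $\mathbf{v}$. Here stationarity of $\mathbf{v}$ for the $2D$ Euler system is essential: the vorticity equation $\partial_t\omega+\mathbf{v}\cdot\nabla\omega=0$ forces $\mathbf{v}\cdot\nabla\omega=0$, i.e. $\omega$ is constant along each streamline. Hence along the trajectory through a point $y$ one has $\dot{\mathbf{n}}=\tfrac{\omega(y)}{2}J\mathbf{n}$, and — since the flow is invertible and preserves $\omega$ — this integrates to $\mathbf{n}(y,t)=\exp\!\big(\tfrac{t\,\omega(y)}{2}J\big)e_1=\big(\cos\tfrac{t\omega(y)}{2},\ \sin\tfrac{t\omega(y)}{2},\ 0\big)=:\mathbf{m}(y,t)$, so that $R(y,t)=s_+\big(\mathbf{m}(y,t)\otimes\mathbf{m}(y,t)-\tfrac13\Id\big)$. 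As $\omega\in C^1(\TT)$, the map $y\mapsto R(y,t)$ is $C^1$ for every $t$, with $|\nabla R(y,t)|\le C|s_+|\,t\,|\nabla\omega(y)|$, which already shows the mechanism: wherever $\nabla\omega\neq0$ the gradient grows linearly, but one must still produce a quantitative lower bound.

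\emph{Step 3: dephasing and blow-up.} Write $p_k:=(x_k,y_k)$, $\bar p:=(\bar x,\bar y)$ and $\delta_k:=\omega(p_k)-\omega(\bar p)$; by hypothesis $\delta_k\neq0$, while $\delta_k\to0$ by continuity of $\omega$ and $p_k\to\bar p$. Choose $t_k:=\pi/|\delta_k|$, so that $t_k\to\infty$ and the planar unit vectors $\mathbf{m}(p_k,t_k)$ and $\mathbf{m}(\bar p,t_k)$ differ by a rotation of angle $\tfrac{t_k}{2}\delta_k=\pm\tfrac{\pi}{2}$, hence are orthogonal. For orthogonal unit vectors $\mathbf{u}\perp\mathbf{u}^\perp$ one has $|\mathbf{u}\otimes\mathbf{u}-\mathbf{u}^\perp\otimes\mathbf{u}^\perp|^2=\tr(\mathbf{u}\otimes\mathbf{u}+\mathbf{u}^\perp\otimes\mathbf{u}^\perp)=2$, so $|R(p_k,t_k)-R(\bar p,t_k)|=\sqrt2\,|s_+|$, a positive constant independent of $k$. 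Applying the mean value inequality to the $C^1$ map $z\mapsto R(z,t_k)$ on the segment joining $\bar p$ and $p_k$ produces a point $(\tilde x_k,\tilde y_k)$ on that segment with
\[
|\nabla R(\tilde x_k,\tilde y_k,t_k)|\ \ge\ \frac{|R(p_k,t_k)-R(\bar p,t_k)|}{|p_k-\bar p|}\ =\ \frac{\sqrt2\,|s_+|}{|p_k-\bar p|}\ \xrightarrow[k\to\infty]{}\ \infty,
\]
because $|p_k-\bar p|\to0$. Together with $t_k\to\infty$, this is the assertion. The argument is elementary once the reductions are set up; the delicate points — and the ``main obstacle'' — are (i) verifying that the uniaxial ansatz is genuinely the solution, i.e.\ the bulk cancellation at $s=s_+$, the reduction to the director transport equation, and the matching of the uniqueness class given that $\mathbf{v}$ is only $C^2$; and (ii) exploiting that a \emph{stationary} $2D$ Euler flow transports its vorticity along streamlines, which is exactly what makes the director rotate at a point-dependent but time-independent rate and thereby generates unboundedly large dephasing between neighbouring points. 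One must also retain the continuity of $\omega$ to guarantee $\delta_k\to0$, without which $t_k$ need not diverge.
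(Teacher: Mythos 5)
Your proof is correct and follows essentially the same route as the paper's: the solution is the fixed uniaxial state with a director rotating at a rate set by the vorticity, which is conserved along the trajectories of the stationary $2D$ flow, and the nonzero vorticity difference $\omega(x_k,y_k)-\omega(\bar x,\bar y)$ produces an $O(1)$ dephasing at times $t_k\to\infty$, whence the mean value inequality yields blow-up of $|\nabla R|$ on the connecting segment. The only differences are cosmetic (you argue in Eulerian form via the uniaxial ansatz, the paper conjugates $Q_0$ by a Lagrangian rotation operator $B(\alpha,t)$, and you make the final mean-value step explicit where the paper leaves it implicit).
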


\begin{remark}
For all non-degenerate critical points $(\bar x, \bar y)$ of the vorticity $\omega$ one can choose an approximating sequence $\{(\tilde x_k, \tilde y_k)\}_{k\in\N}$ in a subset $V\subset\TT$ such that $\omega|_V$ is a non-degenerate local extremum, hence the assumption  $\omega(\tilde x_k, \tilde y_k)-\omega(\bar x, \bar y)\not=0$, for all $k\in \N$ is satisfied.
\end{remark}

\begin{proof}
We consider a Lagrangian perspective.  Recall  that for the given flow $\mathbf{v}$,
the corresponding particle-trajectory map  $X:\TT\times\R\to\TT$  is defined  as the solution of the following ODE system:
\be\label{def:ODEflow}
\left\{\begin{aligned}
& \frac{dX}{dt}(\alpha,t)=\mathbf{v}(X(\alpha,t))\qquad \textrm{ mod }2\pi,\\
& X(\alpha,0)=\alpha\in \TT.
\end{aligned}
\right.
\ee
Since the torus is a compact manifold, the solution exists for $t\in (-\infty,+\infty)$.

We can thus now rewrite \eqref{eq:limitTrans} as an ODE along particle paths:
\be\label{ode:paths-1}
\Big(\frac{d}{dt}R-\Omega R+R\Omega\Big)(X(\alpha,t),t)=-\frac{\partial f_B(R)}{\partial R}(X(\alpha,t),t),
\ee
 where  the matrix $\Omega$ is related to the vorticity of the fluid, namely
\be
\Omega:=\left(\begin{array}{lll} 0 & \omega & 0 \\
-\omega & 0 &0 \\
0 &  0  &0 \end{array}\right),\qquad \text{with}\ \omega:=\partial_x u_{2}-\partial_y u_{1}.
\ee
Furthermore, because the flow is $2D$, the vorticity is constant along particle paths (see for instance, \cite[Corollary 1.2]{Majdabook}):
\be\label{2dvorticitytran}
\omega(X(\alpha,t))=\omega_0(\alpha).
\ee
In order to further simplify the equation \eqref{ode:paths-1}, we introduce the
operator $B:\TT\times \R\to \mathbb{M}^{3\times 3}$ as the solution of the following
system:
\be\label{eq:B}
\left\{\begin{aligned}
&\frac{d}{dt}B(\alpha,t)=\Omega(X(\alpha,t))B(\alpha,t),\\
&B(\alpha,0)=\Id.
\end{aligned}
\right.
\ee
\begin{lemma} $B$ is a rotation operator, i.e.,
 $B(\alpha,t)\in O(3)$, $\forall\,\alpha\in\TT$ and $t\ge
0$.
\end{lemma}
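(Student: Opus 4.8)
The plan is to exploit the skew-symmetry of the matrix $\Omega$ appearing in \eqref{eq:B}. By its very definition, $\Omega(X(\alpha,t))$ is a skew-symmetric $3\times 3$ matrix for every $\alpha\in\TT$ and $t\ge 0$, i.e. $\Omega^T=-\Omega$; hence \eqref{eq:B} is a linear matrix ODE whose generator is skew-symmetric, and such flows preserve orthogonality. So the proof amounts to a one-line differentiation argument once the required smoothness of $t\mapsto B(\alpha,t)$ is in place.

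Concretely, for a fixed $\alpha$ I would introduce the auxiliary matrix-valued function $M(t):=B(\alpha,t)^T B(\alpha,t)$ and differentiate it in $t$, using \eqref{eq:B}:
\[
\frac{d}{dt}M = \dot B^T B + B^T \dot B = B^T\Omega^T B + B^T\Omega B = B^T\big(\Omega^T+\Omega\big)B = 0,
\]
since $\Omega^T+\Omega=0$. Therefore $M$ is constant in time, and evaluating at $t=0$ together with the initial condition $B(\alpha,0)=\Id$ yields $M(t)=\Id$ for all $t\ge 0$. This is precisely $B(\alpha,t)^T B(\alpha,t)=\Id$, i.e. $B(\alpha,t)\in O(3)$ for all $\alpha\in\TT$ and $t\ge 0$. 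The smoothness needed to justify the differentiation is guaranteed by the $C^2$ assumption on $\mathbf v$ in Proposition~\ref{prop:specdata}: it makes the particle-trajectory map $X(\alpha,\cdot)$ defined by \eqref{def:ODEflow} a well-defined $C^1$ curve on the compact manifold $\TT$, so that $t\mapsto \Omega(X(\alpha,t))$ is continuous and $B(\alpha,\cdot)$ is a $C^1$ solution of \eqref{eq:B} with values in $\mathbb{M}^{3\times 3}$.

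I would close with the remark that one actually has $B(\alpha,t)\in SO(3)$: the map $t\mapsto\det B(\alpha,t)$ is continuous, takes values in $\{\pm 1\}$ on the orthogonal group, and equals $1$ at $t=0$, hence stays equal to $1$. Only the weaker membership $B(\alpha,t)\in O(3)$ is needed for the subsequent reduction of the ODE \eqref{eq:limitTrans} along particle paths, so I would state the lemma in that form. There is no genuine obstacle here — the only point requiring a word of care is the well-posedness and regularity of $X(\alpha,\cdot)$ feeding $\Omega$ into \eqref{eq:B}, which is already supplied by the compactness of $\TT$ and the $C^2$ regularity of the stationary Euler flow $\mathbf v$.
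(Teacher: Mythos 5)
Your proof is correct and follows essentially the same approach as the paper: track the Gram matrix of $B$ and use the skew-symmetry of $\Omega$. The only (minor) difference is that you differentiate $B^TB$ and get zero directly, whereas the paper differentiates $BB^T$, obtains the linear ODE $\dot M=\Omega M-M\Omega$, and concludes $M\equiv\Id$ by uniqueness — your variant is marginally more economical.
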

\begin{proof}
Note that
$$\frac{d}{dt}B^T(\alpha,t)=-B^T(\alpha,t)\Omega(X(\alpha,t)).$$
 Let $M(\alpha,t):=B(\alpha,t)B^T(\alpha,t)$. We see that $M(\alpha,t)$ is a solution of the ODE system
\begin{equation}
\left\{
\begin{aligned}
&\frac{d}{dt}M(\alpha,t)=\Omega(X(\alpha,t)) M(\alpha,t)-M(\alpha,t)\Omega(X(\alpha,t)),\\
&M(\alpha,0)=\Id.
\end{aligned}
\right.\non
\end{equation}
It is obvious that  $\Id$ is also a solution of this linear system, then by uniqueness we have
$$
 M(\alpha,t)=\Id,\quad \forall\, \alpha\in\TT,\ t\ge 0,
$$
which yields the conclusion.
\end{proof}
Thanks to the rotation operator $B:\TT\times \R\to O(3)$, we can
further derive
\begin{lemma}
$$
R(X(\alpha,t),t)=B(\alpha,t)Q_0B^T(\alpha,t),\quad \forall\, \alpha\in\TT,\ t\ge 0.
$$
\end{lemma}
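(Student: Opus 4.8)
The plan is to prove the identity by an ODE uniqueness argument: I will show that $t\mapsto B(\alpha,t)Q_0B^T(\alpha,t)$ and $t\mapsto R(X(\alpha,t),t)$ solve one and the same Cauchy problem, for each fixed $\alpha\in\TT$.

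First I would note that $Z(\alpha,t):=R(X(\alpha,t),t)$ solves \eqref{ode:paths-1} with $Z(\alpha,0)=Q_0$; this is just the chain rule applied to \eqref{eq:limitTrans} along the particle path \eqref{def:ODEflow}, together with the fact that here the initial datum $Q_0$ is spatially constant so $R(\alpha,0)=Q_0$. Next I would compute, using \eqref{eq:B} and the skew-symmetry of $\Omega$ (which gives $\tfrac{d}{dt}B^T=-B^T\Omega(X)$), that $Y(\alpha,t):=B(\alpha,t)Q_0B^T(\alpha,t)$ satisfies
\[
\frac{d}{dt}Y=\Omega(X)\,Y-Y\,\Omega(X),
\]
hence $\tfrac{d}{dt}Y-\Omega Y+Y\Omega=0$. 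To match the right-hand side of \eqref{ode:paths-1} I would then invoke two facts. Since $B(\alpha,t)\in O(3)$ (by the preceding lemma) and $f_B$ is isotropic, differentiating $f_B(BRB^T)=f_B(R)$ in $R$ shows $\frac{\partial f_B}{\partial R}(BQ_0B^T)=B\,\frac{\partial f_B}{\partial R}(Q_0)\,B^T$; and the specific tensor $Q_0=s_+\big(e_1\otimes e_1-\frac13\Id\big)$ is a critical point of $f_B$, because for any uniaxial $Q=s(\mathbf{n}\otimes\mathbf{n}-\frac13\Id)$ one has $\frac{\partial f_B}{\partial Q}(Q)=\big(as-\tfrac{b}{3}s^2+\tfrac{2c}{3}s^3\big)\big(\mathbf{n}\otimes\mathbf{n}-\frac13\Id\big)$, which vanishes exactly when $s=0$ or $2cs^2-bs+3a=0$, i.e. $s\in\{0,s_-,s_+\}$. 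Consequently $\frac{\partial f_B}{\partial R}(Y)\equiv 0$, so $Y$ also solves \eqref{ode:paths-1}, with $Y(\alpha,0)=\Id\,Q_0\,\Id=Q_0$.

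Finally I would close the argument by uniqueness: the vector field in \eqref{ode:paths-1}, regarded as a map from $\mathbb{M}^{3\times 3}$ to itself depending continuously on $t$ through $\Omega(X(\alpha,t))$, is locally Lipschitz in the matrix variable (the bulk term $\frac{\partial f_B}{\partial R}$ is polynomial and solutions remain bounded on $[0,T]$), so the Cauchy problem has a unique solution; since $Y$ and $Z$ share the datum $Q_0$, they coincide, which is precisely the asserted formula $R(X(\alpha,t),t)=B(\alpha,t)Q_0B^T(\alpha,t)$ for all $\alpha\in\TT$ and $t\ge 0$. I do not expect a real obstacle here; the only points requiring care are the $O(3)$-equivariance of $\frac{\partial f_B}{\partial R}$ and, relatedly, the observation that $Q_0$ is pinned at the critical value $s_+$ of the bulk potential — this is exactly what turns the rotation ansatz $BQ_0B^T$ into an exact solution rather than an approximate one, and is why the statement is restricted to this particular initial datum.
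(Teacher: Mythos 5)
Your proof is correct and is essentially the paper's argument viewed from the other direction: the paper conjugates $R$ by $B^T$ to obtain the bulk gradient flow for $U=B^TRB$ and observes that $U$ starts at (hence stays at) a steady state, while you verify directly that the ansatz $BQ_0B^T$ solves \eqref{ode:paths-1} and conclude by ODE uniqueness. Both hinge on the same three facts — $B\in O(3)$ with $\dot B=\Omega B$, the $O(3)$-equivariance of $\partial f_B/\partial Q$, and $s_+$ being a critical value of the bulk potential — so there is nothing to add.
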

\begin{proof}
Denote
$$
U(\alpha,t):=B^T(\alpha,t)R(X(\alpha,t))B(\alpha,t).
$$
Then the system \eqref{ode:paths-1} becomes
\begin{align}
\frac{d}{dt}U(\alpha,t)
&=(\frac{d}{dt}B^T)RB+B^T(\frac{d}{dt}R)B+B^TR(\frac{d}{dt}B)\non\\
&=\left(B^T\left(\frac{d}{dt}R-\Omega R+R\Omega\right)B\right)(X(\alpha,t),t)\non\\
&=\left(B^T\left(-aR+b (R^2-\frac{1}{3}\tr(R^2)\Id)-cR\tr(R^2)\right)B\right)(X(\alpha,t),t)\non\\
&=\left(-a U+b \Big(U^2-\frac{1}{3}\tr(U^2)\Id\Big)-c U\tr(U^2)\right)(\alpha,t).
\label{eq:paths}
\end{align}
We know that the equation \eqref{eq:paths}  satisfied by  $U$ is a gradient flow, hence except for the case when one start with zero initial data, in the long-time we will have evolution to one of the two manifolds of steady states, namely,
$$\left\{s_+\left(\mathbf{n}\otimes \mathbf{n}-\frac{1}{3}\Id\right),\ \mathbf{n}\in\mathbb{S}^{2}\right\}\bigcup\left\{s_-\left(\mathbf{n}\otimes \mathbf{n}-\frac{1}{3}\Id\right),\ \mathbf{n}\in\mathbb{S}^{2}\right\},\quad\text{with } s_\pm=\frac{b\pm\sqrt{b^2-24ac}}{4c}.$$
 Moreover, taking the initial data in one of these two manifolds, as we do by the initial assumptions, we have that the solution is stationary in time, hence $U(\alpha,t)=U(\alpha,0)$, which in terms of $R$ becomes:
$$
R(X(\alpha,t),t)=B(\alpha,t)Q_0B^T(\alpha,t).
$$
The proof is complete.
\end{proof}
 Because the flow is defined for all real times, in terms of $R$ we
 infer from the above lemma that
\begin{align}
R(\alpha,t)&=s_+B(X(\alpha,-t),t)\left(e_1\otimes e_1-\frac{1}{3}\Id\right)B^T(X(\alpha,-t),t)\non\\
&=s_+\left(\mathbf{m}(\alpha,t)\otimes \mathbf{m}(\alpha,t)-\frac{1}{3}\Id\right),\non
\end{align}
 where
\be\label{eq:malphat0}
\mathbf{m}(\alpha,t)=B(X(\alpha,-t),t)e_1.
\ee
On the other hand, taking into account the definition \eqref{eq:B} of $B(\alpha,t)$,  we have
\be
B(\alpha,t)=e^{\Omega(\alpha,t)}=\left(\begin{array}{lll} \cos(\pi t\omega(\alpha))  & \sin(\pi t\omega(\alpha))  & 0 \\ -\sin(\pi t\omega(\alpha)) & \cos(\pi t\omega(\alpha))  & 0 \\ 0 & 0 & 0 \end{array}\right),\non
\ee
where for the last equality we used the formula for the exponential of a matrix (see for instance, \cite{Hartmanbook}).
Using  the fact that in $2D$ the vorticity is conserved along the flow map \eqref{2dvorticitytran},  we have
 $$B(X(\alpha,-t),t)=B(\alpha,t).$$
 Hence, \eqref{eq:malphat0} becomes
\be
\mathbf{m}(\alpha,t)=\left(\cos\left(\pi t\omega(X(\alpha,-t))\right), \  -\sin\left(\pi t\omega(X(\alpha,-t))\right),\ 0\right).\non
\ee

Taking $\alpha=(x_k,y_k)$ and  $\alpha=(\bar x, \bar y)$, respectively,  we can calculate the angle between the two vectors $\mathbf{m}$ at these points, at times  $t_k$ (to be fixed later) as
\be
\mathbf{m}(x_k,t_k,t_k)\cdot \mathbf{m}(\bar x,\bar y ,t_k)=\cos \gamma_k,\quad \text{with}\ \ \gamma_k:=\pi t_k\big(\omega(x_k,y_k)-\omega(\bar x,\bar y)\big).\non
\ee
Recalling now the assumption in the hypothesis that $\omega(x_k,y_k)-\omega(\bar x,\bar y)\not=0$, we can take the times
$$
t_k:=\Big(2\big[\omega(x_k,y_k)-\omega(\bar x,\bar y)\big]\Big)^{-1}\to \infty.
$$
As a consequence, we have that as $(x_k,y_k)\to (\bar x,\bar y)$ the angle between $R(x_k,y_k)$ and $R(\bar x,\bar y)$ stays non-zero and fixed, as $t_k\to\infty$.
This leads to the final claim of the proposition.
\end{proof}

\subsubsection{General initial data and special points in the flow}

In dimension two, a divergence-free flow can be expressed in terms of a stream function. More precisely, if $\mathbf{v}:\TT\to\R^2$ is smooth enough with $\nabla\cdot \mathbf{v}=0$ and moreover $\int_{\TT} \mathbf{v}\,dx=0$ then  there exists a scalar function $\psi:\TT\to\R$, called the stream function, such that $\mathbf{v}=(\partial_y \psi, -\partial_x \psi)$.
Thus the system  providing the particle-trajectory map \eqref{def:ODEflow} becomes a Hamiltonian flow, and the stagnation points of the flow are given as critical points of the stream function $\psi$. Locally, the flow around them will be as in Figure~1 (see for instance, \cite[Chapter 1]{GHbook}).

\begin{figure}[h]\label{localflowsfig}
\includegraphics[scale=0.6]{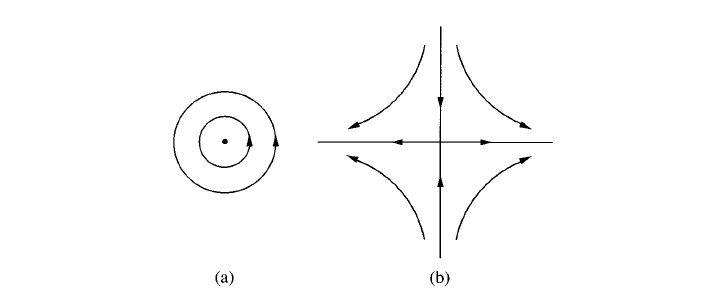}
\caption{Two kinds of stagnation points of the flow: (a) an elliptic fixed point corresponding
to a local minimum or maximum of $\psi$, and (b) a hyperbolic fixed point at a saddle point of $\psi$}
\end{figure}

We will need to assume essentially that around such points the vorticity is not constant, namely we have the kind of assumption of non-degeneracy imposed in Proposition~\ref{prop:specdata}. We expect that  this kind of assumption holds generically, so we will call these ``standard stagnation points". More precisely,

\begin{definition}[Standard stagnation points in incompressible flows]
\label{def:stanstag}
Consider an incompressible flow $\mathbf{v}\in C^2(\TT;\R^2)$ with  zero mean. We call $(\bar x, \bar y)\in \TT$ a
{\rm standard stagnation point for the flow $\mathbf{v}$}, if $\mathbf{v}(\bar x, \bar y)=\mathbf{0}$ and there
exists a sequence of points $\{(x_k, y_k)\}_{k\in N}\subset\TT$ with $(x_k, y_k)\to (\bar x, \bar y)$  such that $|X(x_k,y_k,t)-X(\bar x,\bar y,t)|\to 0$, \textrm{uniformly for} $t\in\R_+
$ and $\omega(x_k, y_k)-\omega(\bar x, \bar y)\not=0$ for any $k\in\N$ (where $X:\TT\times\R\to\TT$ is the
 particle-trajectory map generated by the flow $\mathbf{v}$ and $\omega:=\partial_x v_{2}-\partial_y v_{1}$ is the vorticity of the flow).
\end{definition}

Then we have the following:

\begin{proposition}
Let $\mathbf{v}:\TT\to \R^2$ be a $C^2$ stationary solution of the incompressible Euler system with zero mean.
Let  $(\bar x, \bar y)$ be a standard stagnation point in the sense of Definition~\ref{def:stanstag} and $Q_0:\TT\to \sS_0^{(3)}$ be a $C^2$ function such that $Q_0(\bar x, \bar y)$ is a biaxial $Q$-tensor that belongs to a suitable neighborhood $\mathcal{U}$ of the set $\displaystyle{\Big\{s_+\left(\mathbf{n}\otimes \mathbf{n}-\frac{1}{3}\Id\right), \mathbf{n}\in\mathbb{S}^2\Big\}}$ (as defined in Proposition~\ref{prop:localODE} in the Appendix).

Then there exist a sequence of points $\{(x_k,y_k)\}_{k\in\N}$ converging to $(\bar x,\bar y)$ and a sequences of times $t_k\to\infty$  such that
$$|\nabla R(x_k,y_k, t_k)|\to\infty,\quad \text{as }k\to \infty.$$
\end{proposition}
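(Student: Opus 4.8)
The plan is to run the Lagrangian argument from the proof of Proposition~\ref{prop:specdata}, but to feed into it the long-time behaviour of the ODE gradient flow supplied by Proposition~\ref{prop:localODE} in place of the trivial steady dynamics used there. First I recall the reduction along particle paths established in Section~\ref{sec:defectsdyn}: with $X:\TT\times\R\to\TT$ the particle-trajectory map \eqref{def:ODEflow}, $B(\alpha,t)\in O(3)$ the rotation operator solving \eqref{eq:B}, and $U(\alpha,t):=B^T(\alpha,t)\,R(X(\alpha,t),t)\,B(\alpha,t)$, the conjugated tensor $U(\alpha,\cdot)$ solves the gradient flow $\dot U=-\partial f_B(U)$ with $U(\alpha,0)=Q_0(\alpha)$, while the conservation of vorticity along trajectories \eqref{2dvorticitytran} makes $B(\alpha,t)$ the explicit rotation by the angle $\pi t\,\omega(\alpha)$ in the $e_1$--$e_2$ plane fixing $e_3$, with $B(X(\alpha,-t),t)=B(\alpha,t)$. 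Hence $R(X(\alpha,t),t)=B(\alpha,t)\,U(\alpha,t)\,B^T(\alpha,t)$ for all $\alpha\in\TT$, $t\ge 0$.

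Because $Q_0$ is $C^2$ with $Q_0(\bar x,\bar y)\in\mathcal U$, there is a closed neighbourhood $\mathcal N$ of $(\bar x,\bar y)$ on which $Q_0$ still takes values in $\mathcal U$. By Proposition~\ref{prop:localODE}, the gradient flow then relaxes onto the global-minimum manifold: $U(\alpha,t)\to s_+\big(\mathbf n_\infty(\alpha)\otimes\mathbf n_\infty(\alpha)-\frac13\Id\big)$ as $t\to\infty$, uniformly for $\alpha\in\mathcal N$, where $\mathbf n_\infty:\mathcal N\to\mathbb S^2$ is continuous; moreover --- this being exactly where the biaxiality of $Q_0(\bar x,\bar y)$ and the choice of $\mathcal U$ are used --- the limiting director $\mathbf n_\infty(\bar x,\bar y)$ is not parallel to the vorticity axis $e_3$, so its horizontal component has a fixed length $\rho\in(0,1]$. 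Writing $\mathbf m(\alpha,t):=B(\alpha,t)\mathbf n_\infty(\alpha)$ and using that conjugation by the orthogonal $B(\alpha,t)$ is an isometry, the previous identity gives
\[
\sup_{\alpha\in\mathcal N}\Big|R(X(\alpha,t),t)-s_+\big(\mathbf m(\alpha,t)\otimes\mathbf m(\alpha,t)-\tfrac13\Id\big)\Big|\longrightarrow 0,\qquad t\to\infty.
\]

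Next I use the standard stagnation structure. Let $\{(x_k,y_k)\}\subset\TT$ be the sequence from Definition~\ref{def:stanstag}: $(x_k,y_k)\to(\bar x,\bar y)$, $\ \sup_{t\ge 0}|X(x_k,y_k,t)-(\bar x,\bar y)|\to 0$ (using $X(\bar x,\bar y,t)\equiv(\bar x,\bar y)$, since $(\bar x,\bar y)$ is a stagnation point), and $\omega(x_k,y_k)\ne\omega(\bar x,\bar y)$. As $\omega\in C^1$, we have $\omega(x_k,y_k)-\omega(\bar x,\bar y)\to 0$, so $t_k:=\big(2\,|\omega(x_k,y_k)-\omega(\bar x,\bar y)|\big)^{-1}\to\infty$, while the relative rotation angle $\pi t_k\,(\omega(x_k,y_k)-\omega(\bar x,\bar y))$ accumulated by time $t_k$ equals $\pm\pi/2$ for every $k$. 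Since $\mathbf n_\infty(x_k,y_k)\to\mathbf n_\infty(\bar x,\bar y)$ has its $e_3$-component fixed and its horizontal component of length $\to\rho>0$, in the limit $k\to\infty$ the vectors $\mathbf m(x_k,y_k,t_k)$ and $\mathbf m(\bar x,\bar y,t_k)$ have equal vertical parts and horizontal parts of equal length but rotated by $\pi/2$ relative to each other; hence the angle between the lines $\R\,\mathbf m(x_k,y_k,t_k)$ and $\R\,\mathbf m(\bar x,\bar y,t_k)$ stays bounded below by a fixed positive number. As $\mathbf p\mapsto s_+(\mathbf p\otimes\mathbf p-\frac13\Id)$ is an embedding of $\mathbb{RP}^2$, this, combined with the uniform approximation above, yields a constant $\delta>0$ with
\[
\big|R(X(x_k,y_k,t_k),t_k)-R(\bar x,\bar y,t_k)\big|\ge\delta\qquad\text{for all sufficiently large }k.
\]

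To conclude, set $p_k:=X(x_k,y_k,t_k)$, so that $d_k:=\mathrm{dist}_{\TT}\big(p_k,(\bar x,\bar y)\big)\le\sup_{t\ge 0}|X(x_k,y_k,t)-(\bar x,\bar y)|\to 0$. For each fixed $t$ the solution $R(\cdot,t)$ is of class $C^1$ on $\TT$ (the Lagrangian representation above propagates the $C^2$-regularity of $Q_0$ and of $\mathbf v$), so integrating $\nabla R(\cdot,t_k)$ along a minimal geodesic in $\TT$ joining $(\bar x,\bar y)$ to $p_k$ yields a point $q_k$ on this geodesic with $|\nabla R(q_k,t_k)|\ge \delta/d_k\to\infty$; since $q_k\to(\bar x,\bar y)$, relabelling $q_k$ as $(x_k,y_k)$ finishes the proof. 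The only nontrivial ingredient is Proposition~\ref{prop:localODE}: the statement that the gradient flow started anywhere in $\mathcal U$ converges to the uniaxial minimum manifold with a continuously varying, and --- for our biaxial datum --- non-vertical, limiting director; granting it, the argument is otherwise a direct transcription of the proof of Proposition~\ref{prop:specdata}, with the extra rotation coming now from the mismatch of vorticities at the stagnation point.
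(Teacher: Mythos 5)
Your proof follows the paper's argument essentially verbatim: the same Lagrangian conjugation by the rotation operator $B(\alpha,t)$, the same appeal to Proposition~\ref{prop:localODE} to identify the limiting director field and its continuity at biaxial points, the same choice $t_k=\big(2|\omega(x_k,y_k)-\omega(\bar x,\bar y)|\big)^{-1}$ producing a relative rotation of $\pi/2$ between the merging trajectories, and the same conclusion that a fixed tensor mismatch at points whose distance tends to zero forces $|\nabla R|\to\infty$. The only differences are minor refinements: you require only that the limiting director be non-vertical where the paper's angle function $A$ tacitly assumes it horizontal, and you make explicit the final mean-value step that the paper leaves implicit.
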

\begin{proof}
We consider, similarly as in the proof of Proposition~\ref{prop:specdata}, the particle-trajectory map $X:\TT\times \R\to \TT$, generated by the flow $\mathbf{v}$. Then following the reasoning in the proof of Proposition~\ref{prop:specdata} we obtain:

\be
|R(X(\alpha,t))-B(\alpha,t)\mathcal{T}[Q_0(\alpha)]B(\alpha,t)^T|\to 0,\quad \textrm{ as }t\to\infty,\non
\ee
where $\mathcal{T}[Q_0]=\displaystyle{s_+\Big(\mathbf{n}(Q_0)\otimes \mathbf{n}(Q_0)-\frac{1}{3}\Id\Big)}$ (with $\mathbf{n}(Q_0)\in\mathbb{S}^2$) is the long-time limit of the gradient flow \eqref{eq:paths} starting from initial data $Q_0$ (see Proposition~\ref{prop:localODE} in the Appendix).

Thanks to the assumption that $(\bar x,\bar y)$ is a standard stagnation point (see Definition~\ref{def:stanstag}),
we deduce that there exists a sequence of points $\{(x_k,y_k)\}_{k\in \N}$ converging to $(\bar x,\bar y)$ such that
\be
|X(x_k,y_k,t)-X(\bar x,\bar y,t)|\to 0,\quad  \textrm{ uniformly for }t\in\R_+\non
\ee
 and
\be\label{ass:nondegxyk}
\omega(x_k,y_k)-\omega(\bar x,\bar y)\not=0,\quad \forall\, k\in\N.
\ee
On the other hand, we have
$$B(\alpha,t)\mathcal{T}[Q_0(\alpha)]B(\alpha,t)^T=s_+\Big(B(\alpha,t)\mathbf{n}(Q_0(\alpha))\otimes B(\alpha,t)\mathbf{n}(Q_0(\alpha))-\frac{1}{3}\Id\Big).$$

On a simply-connected neighbourhood  of $\mathcal{V}$ of $(\bar x,\bar y)$ we can define the  ``angle function" $A:\mathcal{V}\to [0,2\pi]$ depending on the function $Q$ such that $\mathbf{n}(Q(x,y))=(\cos(A(x,y)),\sin(A(x,y),0)$ for all $(x,y)\in\mathcal{V}$.
Taking into account that $\mathbf{n}(Q)$ is a continuous function on the neighbourhood $\mathcal{V}$ (see Proposition~\ref{prop:localODE}
 in the Appendix), the angle function $A$ (or its lifting in topological jargon) can be chosen to be continuous.

Thus, with this notation we obtain
\begin{align}
& \Big(B(x_k,y_k,t_k)\mathcal{T}[Q_0(x_k,y_k)]B(x_k,y_k,t_k)^T\Big) : \Big(B(\bar x,\bar y,t_k)T[Q_0(\bar x,\bar y)]B(\bar x,\bar y,t_k)^T\Big)\non\\
& \quad =s_+^2 \left[ \left(B(\cdot,t_k)\mathbf{n}(Q_0)|_{(x_k,y_k)}\cdot B(\cdot,t_k)\mathbf{n}(Q_0)|_{(\bar x,\bar y)}\right)^2-\frac{1}{3}\right]\non\\
& \quad =s_+^2 \left((\cos \gamma_k)^2-\frac{1}{3}\right),\non
\end{align}
 where
\be
\gamma_k:=(A-\pi t_k\omega)(x_k,y_k)-(A-\pi t_k\omega)(\bar x,\bar y).\non
\ee
Because of \eqref{ass:nondegxyk}, we can take
$$
t_k:=\Big(2\big[\omega(x_k,y_k)-\omega(\bar x,\bar y)\big]\Big)^{-1}\to \infty.
$$
Then, using the continuity of the angle function $A$ we deduce that  $\gamma_k\to \frac{\pi}{2}$ as $k\to \infty$.

The proof is complete.
\end{proof}

\begin{appendix}
\section{The nonlinear Trotter product formula}
\label{sec:Trotter}
\setcounter{equation}{0}

Suppose $u$ is a solution to the parabolic problem
\begin{equation}
\left\{
\begin{aligned}
&\dfrac{\partial{u}}{\partial{t}}=A(t){u}+X(u),\\
&u(0)=f,
\end{aligned}
\right.
\label{parabolic equ}
\end{equation}
One can obtain its solutions by successively solving two
simpler equations
\begin{equation}
\dfrac{\partial{u}}{\partial{t}}=A(t){u}, \quad
\dfrac{\partial{u}}{\partial{t}}=X(u).
\end{equation}
Such Trotter  product formulas combining the solutions of the two simpler equations are available in the literature in the case when $A(t)$ is a time-independent operator (see for instance, \cite{T97}). However, such results do not seem to be immediately available in the literature for time-dependent operators as we need. So we provide here a brief argument showing the specific result we shall use, which follows closely the argument in \cite[Proposition 5, pp. 310]{T97}.

We start by recalling the following definition  (cf. \cite{P83}, pp. $129$)
\begin{definition}\label{clodef}
Let $V$ be a Banach space. The two-parameter family of bounded linear operators  $\mathcal{U}(t,s)$ for $0\le s\le t\le T$ on $X$    is called an {\rm evolution system} if:

(i) $\mathcal{U}(s, s)=I$,  $\mathcal{U}(t, s)=\mathcal{U}(t,\tau)U(\tau, s)$ for any $s\leq \tau\leq t$ ;

(ii) the mapping $(t,s)\to \mathcal{U}(t,s)$ is strongly continuous for $0\le s\le t\le T$.
\end{definition}

Then we denote by $\mathcal{U}(t, s)$ the evolution system generated by the
time-dependent operator $A(t)$ on the Banach space $V$. Besides, we also let $\F^t$ be the one-parameter semigroup generated on $V$ by the nonlinear ODE $\dfrac{\partial{u}}{\partial{t}}=X(u)$.

For $T>0$, $n\in \mathbb{N}$, $k\in \{1,...,n\}$, we set
\begin{equation}
\label{v equ 1}
v_k=\left(\mathcal{U}\Big(\frac{k}{n}T,\frac{k-1}{n}T\Big)\F^{\frac{T}{n}}\right)\circ\cdots\left(\mathcal{U}\Big(\frac{2T}{n},
\frac{T}{n}\Big)\F^{\frac{T}{n}} \right)\circ\left(\mathcal{U}\Big(\frac{T}{n},
0\Big)\F^{\frac{T}{n}}  \right)f,
\end{equation}
and furthermore
\begin{equation}\label{v equ 2}
v(t)=\mathcal{U}\Big(\frac{k}{n}T+\eta,\frac{k}{n}T\Big)\F^\eta{v}_k,
\quad\mbox{for } t=\frac{kT}{n}+\eta, \; 0\leq \eta<\frac{T}{n}.
\end{equation}
\begin{proposition}
\label{proposition in T}
Let $V$ and $W$ be two Banach spaces such that $V$ is continuously
embedded in $W$. We make the following assumptions.

(H1) Suppose that the evolution system $\mathcal{U}(t, s)$ generated by $A(t)$ on $V$
satisfies
\begin{equation}
\label{property of A}
\|\mathcal{U}(t, s)\|_{\LL(V)}\leq e^{C(t-s)},
\quad\|\mathcal{U}(t, s)\|_{\LL(W,V)}\leq C(t-s)^{-\gamma},
\quad\|\mathcal{U}(t, s)-I\|_{\LL(V,W)}\leq C(t-s)^{\delta},
\end{equation}
for $0\leq s<t\leq T$, with some $\delta>0$, $0<\gamma<1$. Here, $I$ stands for the identity operator and $\LL(V,W)$ denotes the set of bounded linear operators from $V$ into $W$.

(H2) Let  $\F^t$ be the flow generated by the vector field $X$. Assume it  satisfies:
\begin{align*}
&\|\F^t(f)\|_V\leq C_2, \qquad\quad\ \ \mbox{for } \|f\|_V\leq C_1,\\
&\|\F^t(f)\|_V\leq e^{Ct}\|f\|_V,\quad\, \mbox{for }\|f\|_V\geq C_1,
\end{align*}
for $0<t\leq T$. Here, $C_1, C_2, C>0$ are some constants independent of $f\in V$.

We also assume that
\begin{equation}\label{bound-X}
X: V\rightarrow V \ \mbox{and}\  \mathcal{Y}: V\times
V\rightarrow\LL(W)\cap\LL(V) \;\mbox{are bounded},
\end{equation}
where
$$
  \mathcal{Y}(h, g)\defeq\int_0^1 DX\big(sh+(1-s)g\big)\,ds.
$$

For any  $f\in V$, we let  $u\in C([0, T]; V)$ be the solution to problem \eqref{parabolic
equ}, and let $v\in C([0, T]; V)$ be defined by \eqref{v equ 1}--\eqref{v
equ 2} above. Then we have the following  error estimate:
\begin{equation}
\|v(t)-u(t)\|_V \leq C(T, C_1,C_2,\|f\|_V)n^{-\delta},\quad 0\leq t\leq T.\non
\end{equation}
\end{proposition}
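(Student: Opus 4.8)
The plan is to adapt the argument of \cite[Proposition~5, p.~310]{T97} to the time-dependent setting; the key device is a \emph{telescoped} Duhamel representation of both the exact and the approximate solutions, followed by a discrete Gronwall inequality. Write $h=T/n$ and $t_k=kT/n$. The variation-of-constants formula for the evolution system $\mathcal{U}$ gives, for the solution $u$ of \eqref{parabolic equ},
\[
u(t_m)=\mathcal{U}(t_m,0)f+\sum_{k=1}^{m}\int_0^h \mathcal{U}(t_m,t_{k-1}+\sigma)\,X\big(u(t_{k-1}+\sigma)\big)\,d\sigma ,
\]
while unrolling the recursion \eqref{v equ 1} with the help of $\F^h g=g+\int_0^h X(\F^\sigma g)\,d\sigma$ and the evolution property $\mathcal{U}(t_m,t_k)\mathcal{U}(t_k,t_{k-1})=\mathcal{U}(t_m,t_{k-1})$ yields
\[
v_m=\mathcal{U}(t_m,0)f+\sum_{k=1}^{m}\int_0^h \mathcal{U}(t_m,t_{k-1})\,X\big(\F^\sigma v_{k-1}\big)\,d\sigma .
\]
First I would record three consequences of the hypotheses: (a) by (H2) and the first bound in \eqref{property of A}, an induction on $k$ gives $\|v_k\|_V\le M$ with $M$ independent of $n$ and $k$, hence $\|X(\F^\sigma v_{k-1})\|_V\le C$ uniformly (using $X\colon V\to V$ bounded); (b) from $X(g)-X(g')=\mathcal{Y}(g,g')(g-g')$ and \eqref{bound-X}, $\|X(g)-X(g')\|_V\le C\|g-g'\|_V$, hence $\|\F^\sigma g-\F^\sigma g'\|_V\le e^{C\sigma}\|g-g'\|_V$; (c) comparing the Duhamel formulas of $\F^\sigma u(t_{k-1})$ and $u(t_{k-1}+\sigma)$, whose difference has leading term $[\,I-\mathcal{U}(t_{k-1}+\sigma,t_{k-1})\,]u(t_{k-1})$, the third bound in \eqref{property of A} gives $\|\F^\sigma u(t_{k-1})-u(t_{k-1}+\sigma)\|_W\le C\sigma^{\delta}$.

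Subtracting the two formulas (the terms $\mathcal{U}(t_m,0)f$ cancel) and splitting the integrand into three groups,
\[
\big[\mathcal{U}(t_m,t_{k-1})-\mathcal{U}(t_m,t_{k-1}+\sigma)\big]X(\F^\sigma v_{k-1}) +\mathcal{U}(t_m,t_{k-1}+\sigma)\big[X(\F^\sigma v_{k-1})-X(\F^\sigma u(t_{k-1}))\big] +\mathcal{U}(t_m,t_{k-1}+\sigma)\big[X(\F^\sigma u(t_{k-1}))-X(u(t_{k-1}+\sigma))\big],
\]
call these (i), (ii), (iii). For (i) I would write $\mathcal{U}(t_m,t_{k-1})-\mathcal{U}(t_m,t_{k-1}+\sigma)=\mathcal{U}(t_m,t_{k-1}+\sigma)[\mathcal{U}(t_{k-1}+\sigma,t_{k-1})-I]$, so that the bracket applied to $X(\F^\sigma v_{k-1})$ is $O(\sigma^{\delta})$ in $W$ (third bound in \eqref{property of A}, and $\|X(\F^\sigma v_{k-1})\|_V\le C$), and then $\mathcal{U}(t_m,t_{k-1}+\sigma)$ maps $W$ into $V$ with norm $\le C(t_m-t_{k-1}-\sigma)^{-\gamma}$ (second bound). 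The same smoothing$\,+\,O(\sigma^\delta)$ structure holds for (iii), now using $\mathcal{Y}\in\LL(W)$ and (c). Summing over $k$, the contributions of (i) and (iii) to $v_m-u(t_m)$ are each bounded in $V$ by
\[
C\sum_{k=1}^{m}\int_0^h (t_m-t_{k-1}-\sigma)^{-\gamma}\sigma^{\delta}\,d\sigma \le C\,h^{1+\delta-\gamma}\Big(1+\sum_{j=1}^{m-1}j^{-\gamma}\Big) \le C\,h^{1+\delta-\gamma}\,n^{1-\gamma}=C\,n^{-\delta},
\]
where for $k<m$ one uses $t_m-t_{k-1}-\sigma\ge (m-k)h$ and the $k=m$ term contributes a harmless $C\,h^{1+\delta-\gamma}\le Cn^{-\delta}$. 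For (ii), $X(\F^\sigma v_{k-1})-X(\F^\sigma u(t_{k-1}))$ is controlled in $V$ by $C\|v_{k-1}-u(t_{k-1})\|_V$ via (b), and $\mathcal{U}(t_m,t_{k-1}+\sigma)$ is bounded on $V$, so its contribution is $\le Ch\sum_{k=1}^{m}\|v_{k-1}-u(t_{k-1})\|_V$.

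Setting $d_m=\|v_m-u(t_m)\|_V$ (so $d_0=0$), the three estimates combine, for every $m\le n$, into $d_m\le C\,n^{-\delta}+Ch\sum_{k=0}^{m-1}d_k$, and the discrete Gronwall lemma gives $d_m\le Ce^{CT}n^{-\delta}$ uniformly in $m$. For a general $t=t_k+\eta$, $0\le\eta<h$, one compares $v(t)=\mathcal{U}(t,t_k)\F^\eta v_k$ with the Duhamel formula for $u$ on $[t_k,t]$: $\mathcal{U}(t,t_k)(v_k-u(t_k))$ is $\le e^{C\eta}d_k\le Cn^{-\delta}$, and the remaining ``partial-step'' terms are handled exactly as in (i)--(iii) with $h$ replaced by $\eta\le h$, hence $\le Cn^{-\delta}$, giving $\|v(t)-u(t)\|_V\le C(T,C_1,C_2,\|f\|_V)n^{-\delta}$ on $[0,T]$. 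The main obstacle is precisely groups (i)/(iii): a naive ``local error $+$ step-by-step accumulation'' argument bounds each one-step error by $O(h^{1+\delta-\gamma})$ in $V$ and, summed over $n$ steps, only yields $O(n^{-(\delta-\gamma)})$, which is vacuous when $\gamma\ge\delta$ — in particular for the intended choice $\gamma=\delta=\tfrac12$. Avoiding this loss is exactly what forces the telescoped representation above, in which the error generated on the $k$-th substep is transported to time $t_m$ by the \emph{single} operator $\mathcal{U}(t_m,t_{k-1})$, whose smoothing exponent $(t_m-t_{k-1})^{-\gamma}\sim((m-k)h)^{-\gamma}$ is summable over $k$ with sum of order $n^{1-\gamma}$; making this book-keeping precise, together with establishing the $n$-uniform bound $\|v_k\|_V\le M$ from (H2), is the delicate point.
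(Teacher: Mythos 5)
Your argument is correct, and it rests on exactly the three ingredients that drive the paper's proof: the mean-value operator $\mathcal{Y}$ to linearize $X$, a local consistency error of size $O(h^{\delta})$ measured in the \emph{weaker} norm $W$ via the bound on $\|\mathcal{U}(t,s)-I\|_{\LL(V,W)}$, and the integrable smoothing $\|\mathcal{U}(t,\tau)\|_{\LL(W,V)}\leq C(t-\tau)^{-\gamma}$ with $\gamma<1$ to transport that error to the final time without the $n^{\gamma}$ loss you correctly identify as the danger of a naive step-by-step accumulation. The route is organized differently, though. The paper observes that the piecewise-defined $v$ itself solves the perturbed equation $\partial_t v=A(t)v+X(v)+R(t)$ with $\|R(t)\|_{W}\leq Cn^{-\delta}$ uniformly in $t$, writes a single continuous Duhamel formula for $w=v-u$, and closes with the integral form of Gronwall's lemma; your version replaces this by a fully discrete telescoped Duhamel sum for $v_m$ and $u(t_m)$, the splitting (i)--(iii), and a discrete Gronwall inequality. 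The paper's formulation is shorter and makes the uniform $n^{-\delta}$ size of the defect visible in one line, but it tacitly assumes that $v$ is differentiable in $t$ and that Duhamel's principle applies to it; your mild (integral) formulation avoids that point entirely, at the cost of the extra bookkeeping in (i)--(iii) and the separate treatment of intermediate times $t=t_k+\eta$. One harmless caveat: your step (c) compares $\F^{\sigma}u(t_{k-1})$ with $u(t_{k-1}+\sigma)$ and therefore actually produces $O(\sigma^{\delta}+\sigma)$, i.e.\ the rate $\sigma^{\min(\delta,1)}$, whereas the paper compares $X(\F^{\eta}v_k)$ with $X(v(t))$ and gets a clean $O(\eta^{\delta})$ for every $\delta>0$; since the application uses $\delta=\tfrac12$, this changes nothing.
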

\begin{proof}
The proof is adapted from that of \cite[Proposition 5.1]{T97}. We note that for $\frac{kT}{n}< t<\frac{(k+1)T}{n}$,
\begin{align*}
\frac{\partial{v}}{\partial{t}}&=A(t)v+\mathcal{U}\Big(\frac{k}{n}T+\eta,\frac{k}{n}T\Big)X(\F^\eta{v}_k)\\
&=A(t)v+X(v)+R(t), \quad \text{with}\ \eta=t-\frac{kT}{n}\in \Big(0, \frac{kT}{n}\Big),
\end{align*}
where
\begin{align*}
R(t)&\defeq
\mathcal{U}\Big(\frac{k}{n}T+\eta,\frac{k}{n}T\Big)X(\F^\eta{v}_k)-X(v)\\
&=\Big[\mathcal{U}\Big(\frac{k}{n}T+\eta,\frac{k}{n}T\Big)-I\Big]X(\F^\eta{v}_k)+\Big[X(\F^\eta{v}_k)-X(v)\Big].
\end{align*}
We infer from the assumptions (H1), (H2) the following estimates
\begin{align}
&\|X(\F^\eta{v}_k)-X(v)\|_{W}\nonumber\\
&\quad =\left\|X(\F^\eta{v}_k)-X\Big(\mathcal{U}\Big(\frac{k}{n}T+\eta,\frac{k}{n}T\Big)\F^\eta{v}_k\Big)\right\|_{W}\nonumber\\
&\quad \leq C \left\| \mathcal{Y}\Big(\F^\eta{v}_k, \mathcal{U}\Big(\frac{k}{n}T+\eta,\frac{k}{n}T\Big)\F^\eta{v}_k \Big)\right\|_{\mathcal{L}(W)}
\left\|\mathcal{U}\Big(\frac{k}{n}T+\eta,\frac{k}{n}T\Big)-I\right\|_{\mathcal{L}(V,W)}\|\F^\eta{v}_k\|_{V}\nonumber\\
&\quad \leq CT^\delta n^{-\delta} \|\F^\eta{v}_k\|_{V},\nonumber
\end{align}
and
\begin{align}
\|{v}_k\|_{V}&\leq C(T, C_1, C_2, \|f\|_{V}),\quad \|\F^\eta{v}_k\|_V\leq C(T, C_1, C_2, \|f\|_{V}). \nonumber
\end{align}
Let $w\defeq v-u$. Then the difference $w$ satisfies the equation
\begin{equation}
\left\{
\begin{aligned}
&\dfrac{\partial{w}}{\partial{t}}=A(t)w+X(v)-X(u)+R(t),\\
&w(0)=0.
\end{aligned}
\right.\non
\end{equation}
Using Duhamel's principle (see \cite{P83}), we know that
\begin{equation}
w(t)=\int_0^t\mathcal{U}(t,
\tau)\big[X(v(\tau))-X(u(\tau))+R(\tau)\big]\,d\tau.\non
\end{equation}
As a consequence, by assumptions \eqref{property of A}--\eqref{bound-X} and
the definition of $R(t)$, we obtain
\begin{align}
\|w(t)\|_V
& \leq\int_0^t\|\mathcal{U}(t,\tau)\|_{\LL(V)}\|X(v(\tau))-X(u(\tau))\|_Vd\tau+\int_0^t\|\mathcal{U}(t,\tau)\|_{\LL(W,V)}\|R(\tau)\|_{W}d\tau\non\\
& \leq\int_0^te^{C(t-\tau)}\|\mathcal{Y}(v(\tau),u(\tau))w(\tau)\|_Vd\tau+C\int_0^t(t-\tau)^{-\gamma}\|R(\tau)\|_{W}d\tau\non\\
& \leq\int_0^te^{C(t-\tau)}\|\mathcal{Y}(v(\tau),u(\tau))\|_{\LL(V)}\|w(\tau)\|_Vd\tau+C\int_0^t(t-\tau)^{-\gamma}\|X(\F^\eta{v}_k)-X(v)\|_{W}d\tau\non\\
&\quad+C\int_0^t(t-\tau)^{-\gamma}\Big\|\mathcal{U}\Big(\frac{k}{n}T+s,\frac{k}{n}T\Big)-I\Big\|_{\LL(V,W)}\|X(\F^\eta{v}_k)\|_{V}d\tau\non\\
& \leq
C\int_0^te^{C(t-\tau)}\|w(\tau)\|_Vd\tau+CT^\delta n^{-\delta}\int_0^t(t-\tau)^{-\gamma}\|\F^\eta{v}_k\|_Vd\tau\non\\
&\quad+CT^\delta n^{-\delta}\int_0^t(t-\tau)^{-\gamma}\|X(\F^\eta{v}_k)\|_Vd\tau\non\\
& \leq
C\int_0^te^{C(t-\tau)}\|w(\tau)\|_Vd\tau+ C(T, C_1,C_2,\|f\|_V)n^{-\delta}\int_0^t(t-\tau)^{-\gamma} d\tau\non\\
& \leq
C\int_0^te^{C(t-\tau)}\|w(\tau)\|_Vd\tau+C(T, C_1,C_2,\|f\|_V)t^{1-\gamma}n^{-\delta}.
\label{estimate-w}
\end{align}
Hence, the proof is complete by a direct application of Gronwall's Lemma to the inequality \eqref{estimate-w}.
\end{proof}

\section{Global existence for the limit system}
\setcounter{equation}{0}

In order to provide the global existence for the limit system as $\eps\to 0$, we first recall some technical result, concerning estimates in higher order space, in particular the  commutator estimates, that are nowadays standard (see for instance, \cite{Majdacompressible}).

\begin{lemma}
For any $f,g\in H^s(\TT)$ ($s\ge 2$, $s\in\N$) and any multi-index $\alpha$ with $|\alpha|\le s$, we have
\begin{align}
&\|\partial^\alpha_x (fg)\|_{L^2} \le c_s\left(\|f\|_{L^\infty}\|\nabla^s_x g\|_{L^2}+\|g\|_{L^\infty}\|\nabla^s_x f\|_{L^2}\right), \label{est:products}\\
&\|\partial^\alpha_x(fg)-f\partial^\alpha_x g\|_{L^2}\le c_s\left(\|\nabla_x f\|_{L^\infty} \|\nabla^{s-1}_x g\|_{L^2}+\|\nabla^s_x f\|_{L^2}\|g\|_{L^\infty}\right),\label{est:commutator}
\end{align}
for some positive constant $c_s$ independent of $f, g$.
\end{lemma}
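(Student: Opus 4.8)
These are the classical Moser--type (Kato--Ponce) estimates, and the plan is to prove both by the elementary route. First I would reduce to $f,g\in C^\infty(\TT)$ by density. Then, since $\partial^\alpha_x$ is a local integer-order operator, I would expand by the Leibniz rule and estimate each resulting bilinear term by H\"older's inequality paired with a Gagliardo--Nirenberg interpolation inequality, finishing with Young's inequality to separate the product into the two terms on the right-hand side. The two statements differ only by the cancellation, in the commutator, of the term carrying all the derivatives on $g$: this is exactly what allows one derivative to be moved from the worst $g$-factor onto $f$, producing $\|\nabla_x f\|_{L^\infty}$ in place of $\|f\|_{L^\infty}$.

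\textbf{Proof of \eqref{est:products}.} Write $\partial^\alpha_x(fg)=\sum_{\beta\le\alpha}\binom{\alpha}{\beta}\,\partial^\beta_x f\,\partial^{\alpha-\beta}_x g$ and set $m=|\alpha|\le s$, $j=|\beta|$. The extreme terms $j=0$ and $j=m$ are handled directly: $\|f\,\partial^\alpha_x g\|_{L^2}\le\|f\|_{L^\infty}\|\partial^\alpha_x g\|_{L^2}$ and $\|\partial^\alpha_x f\cdot g\|_{L^2}\le\|\partial^\alpha_x f\|_{L^2}\|g\|_{L^\infty}$, after which one uses that on $\TT=[-\pi,\pi]^2$ a genuine derivative has zero mean, so $\|\nabla^m_x h\|_{L^2}\le C_{m,s}\|\nabla^s_x h\|_{L^2}$ for $1\le m\le s$ (immediate from Parseval, since the nonzero Fourier frequencies have modulus $\ge1$). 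For the intermediate terms $1\le j\le m-1$ I would apply H\"older with the conjugate pair $p=2m/j$, $p'=2m/(m-j)$ (so that $\frac1p+\frac1{p'}=\frac12$), and then the Gagliardo--Nirenberg inequality $\|\nabla^j_x h\|_{L^{2m/j}}\le C\|h\|_{L^\infty}^{1-j/m}\|\nabla^m_x h\|_{L^2}^{j/m}$ to each factor; note both $\partial^\beta_x f$ and $\partial^{\alpha-\beta}_x g$ have positive order here, hence zero mean, so this inequality is available on the torus. This bounds the term by $C\,(\|\nabla^m_x f\|_{L^2}\|g\|_{L^\infty})^{j/m}(\|f\|_{L^\infty}\|\nabla^m_x g\|_{L^2})^{1-j/m}$, which Young's inequality converts into $C(\|\nabla^m_x f\|_{L^2}\|g\|_{L^\infty}+\|f\|_{L^\infty}\|\nabla^m_x g\|_{L^2})$. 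Summing the finitely many terms and replacing $\nabla^m$ by $\nabla^s$ gives \eqref{est:products}.

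\textbf{Proof of \eqref{est:commutator}.} Now $\partial^\alpha_x(fg)-f\,\partial^\alpha_x g=\sum_{0<\beta\le\alpha}\binom{\alpha}{\beta}\,\partial^\beta_x f\,\partial^{\alpha-\beta}_x g$, so every term has $j=|\beta|\ge1$. Writing $\partial^\beta_x f=\partial^{\beta'}_x(\partial_i f)$ with $|\beta'|=j-1$, I would rerun the H\"older/Gagliardo--Nirenberg argument with $\nabla_x f$ replacing $f$ and top order $m-1$ replacing $m$: the $f$-factor gets controlled by $\|\nabla_x f\|_{L^\infty}^{1-(j-1)/(m-1)}\|\nabla^m_x f\|_{L^2}^{(j-1)/(m-1)}$ and the $g$-factor by $\|g\|_{L^\infty}^{1-(m-j)/(m-1)}\|\nabla^{m-1}_x g\|_{L^2}^{(m-j)/(m-1)}$, and Young's inequality again splits the product into $C(\|\nabla_x f\|_{L^\infty}\|\nabla^{m-1}_x g\|_{L^2}+\|\nabla^m_x f\|_{L^2}\|g\|_{L^\infty})$; one then replaces $\nabla^{m-1},\nabla^m$ by $\nabla^{s-1},\nabla^s$. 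The degenerate subcases are disposed of directly: if $m=1$ the only term is $\partial_i f\cdot g$, bounded by $\|\partial_i f\|_{L^4}\|g\|_{L^4}\le C\|\nabla^s_x f\|_{L^2}\|g\|_{L^\infty}$ (using the zero mean of $\partial_i f$, Poincar\'e, and $H^1(\TT)\hookrightarrow L^4(\TT)$); and if $\beta=\alpha$ the $g$-factor is just $g$, so the term is $\le\|\partial^\alpha_x f\|_{L^2}\|g\|_{L^\infty}\le C\|\nabla^s_x f\|_{L^2}\|g\|_{L^\infty}$.

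\textbf{Expected main obstacle.} There is essentially no conceptual difficulty; the work is entirely in the bookkeeping of the interpolation exponents --- making sure the H\"older pair is conjugate and the Gagliardo--Nirenberg exponents are admissible at every intermediate order --- and in tidily disposing of the endpoint and degenerate terms (including the fact that for small $s$, such as $s=2$, the naive bound $\|\nabla_x f\|_{L^\infty}\lesssim\|\nabla^s_x f\|_{L^2}$ fails and one must instead route through H\"older and zero-mean Poincar\'e as above). I would also make explicit, or simply cite, the Gagliardo--Nirenberg inequality and the Parseval fact $\|\nabla^j_x h\|_{L^2}\le C_{j,s}\|\nabla^s_x h\|_{L^2}$ for $1\le j\le s$ on $\TT=[-\pi,\pi]^2$; these are the only places where compactness and periodicity of the domain are used, and otherwise the argument is identical to the one on $\R^2$.
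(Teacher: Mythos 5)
The paper offers no proof of this lemma at all: it is stated as a standard fact with a citation to Majda's book \cite{Majdacompressible}, and your Leibniz-rule plus H\"older plus Gagliardo--Nirenberg plus Young argument is precisely the classical proof found there, with the exponent bookkeeping and the torus-specific points (zero mean of derivatives, Parseval) handled correctly. The only blemish is inherited from the statement itself rather than from your argument: for $\alpha=0$ the first inequality \eqref{est:products} fails for constant $f,g$ (the right-hand side vanishes while the left does not), so one should either restrict to $|\alpha|=s$, as is done where the lemma is actually invoked, or read $\|\nabla^s_x\cdot\|_{L^2}$ as the full $H^s$ norm; with that caveat your proof is complete and matches the standard route.
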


We can now provide our global existence result:

\begin{proposition}\label{prop:globalexistlimitsyst}
Consider the following system for $\mathbf{v}:\TT\to \RR^3$ and $R:\TT\to\sS_0^{(3)}$:
\begin{align}
&\delt \mathbf{v}+ \mathbf{v}\cdot\nabla\mathbf{v}=-\nabla q,\label{nondim:v10app}\\
&\nabla\cdot \mathbf{v}=0,\label{nondim:dvapp}\\
&\delt R+\mathbf{v}\cdot\nabla R=(\xi D_{\mathbf{v}}+ \Omega_{\mathbf{v}})\big( R+\frac13 \Id
\big)+\big(R+\frac13 \Id \big)(\xi
D_{\mathbf{v}}- \Omega_{\mathbf{v}}) -2\xi\big(R+\frac13 \Id \big)\tr(R\nabla \mathbf{v})\non\\
&\qquad\qquad \qquad \quad -aR+b\big[R^2-\frac{1}{3}\tr(R^2)\Id\big]- cR\tr(R^2),
\label{nondimn:R10app}
\end{align}
with $\Omega_\mathbf{v}:=\displaystyle{\frac{\nabla \mathbf{v}-\nabla^T \mathbf{v}}{2}}$ and $D_\mathbf{v}:=\displaystyle{\frac{\nabla \mathbf{v}+\nabla^T \mathbf{v}}{2}}$.

If the initial data satisfy $(\mathbf{v}_0,R_0)\in H^5 (\TT;\RR^3)\times H^4(\TT; \sS_0^{(3)})$ then for any $T>0$, system \eqref{nondim:v10app}--\eqref{nondimn:R10app} admits a solution
$$(\mathbf{v},R)\in L^\infty\big(0,T;H^5 (\TT;\RR^3)\times H^4(\TT; \sS_0^{(3)})\big).$$

\end{proposition}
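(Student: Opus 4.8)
The plan is to use the fact that the coupling in \eqref{nondim:v10app}--\eqref{nondimn:R10app} is one-directional: the velocity equation does not involve $R$, so I will first solve \eqref{nondim:v10app}--\eqref{nondim:dvapp} globally, and then treat \eqref{nondimn:R10app} as a semilinear transport equation driven by the now-known field $\mathbf{v}$. For the flow, note that on the two-dimensional torus $\TT$ the constraint $\nabla\cdot\mathbf{v}=0$ reads $\partial_1 v_1+\partial_2 v_2=0$ and $q=q(x_1,x_2)$, so the third component of \eqref{nondim:v10app} is the passive transport equation $\partial_t v_3+\mathbf{v}\cdot\nabla v_3=0$, while $(v_1,v_2)$ solves the classical $2D$ incompressible Euler system. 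By the classical global theory for $2D$ Euler in Sobolev spaces (Wolibner, Kato; see \cite{kato, Majdabook}), for $\mathbf{v}_0\in H^5(\TT;\RR^3)$ and every $T>0$ there is a unique solution $\mathbf{v}\in L^\infty(0,T;H^5(\TT;\RR^3))$; by Sobolev embedding on $\TT$ this gives $M_T:=\|\nabla\mathbf{v}\|_{L^\infty(0,T;L^\infty)}+\|\mathbf{v}\|_{L^\infty(0,T;H^5)}<\infty$, which is all that will be used about $\mathbf{v}$.

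The next step is an a priori $L^\infty$-bound for $R$ on $[0,T]$, uniform with respect to any approximation used to construct it. A direct computation, using that $\Omega_{\mathbf{v}}$ is skew-symmetric and $R$ is symmetric (so $\tr(\Omega_{\mathbf{v}}R)=\tr(\Omega_{\mathbf{v}}R^2)=0$) and that $\tr(R\nabla\mathbf{v})=\tr(RD_{\mathbf{v}})$, shows that the ``$S$-part'' of the right-hand side of \eqref{nondimn:R10app} contracted with $R$ equals
\[
\Big[(\xi D_{\mathbf{v}}+\Omega_{\mathbf{v}})\big(R+\tfrac13\Id\big)+\big(R+\tfrac13\Id\big)(\xi D_{\mathbf{v}}-\Omega_{\mathbf{v}})-2\xi\big(R+\tfrac13\Id\big)\tr(R\nabla\mathbf{v})\Big]:R=2\xi\,\tr(D_{\mathbf{v}}R^2)+\tfrac{2\xi}{3}\tr(D_{\mathbf{v}}R)-2\xi\,\tr(D_{\mathbf{v}}R)\,|R|^2,
\]
whence $\big|S(\nabla\mathbf{v},R):R\big|\le C|\xi|\,|\nabla\mathbf{v}|\,(|R|+|R|^2+|R|^3)$, while the reaction part obeys $\big(-aR+b[R^2-\tfrac13|R|^2\Id]-cR|R|^2\big):R=-a|R|^2+b\,\tr(R^3)-c|R|^4$. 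Since $c>0$, Young's inequality absorbs all the lower powers of $|R|$ — and, crucially, the term $|\xi|\,|\nabla\mathbf{v}|\,|R|^3$ — into the dissipative $-c|R|^4$, leaving
\[
\tfrac12\big(\partial_t+\mathbf{v}\cdot\nabla\big)|R|^2\le-\tfrac{c}{2}|R|^4+C\big(1+\xi^4|\nabla\mathbf{v}(t)|^4\big)\le C\big(1+\xi^4|\nabla\mathbf{v}(t)|^4\big).
\]
Evaluating this along the particle trajectories of $\mathbf{v}$ (or applying the envelope/Rademacher argument to $t\mapsto\max_x|R(x,t)|^2$) and integrating in time yields $\|R(t)\|_{L^\infty}^2\le\|R_0\|_{L^\infty}^2+C\,T\,(1+\xi^4 M_T^4)=:\Lambda_T$ for all $t\in[0,T]$.

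With this bound in hand, the $H^4$-estimate closes. Local-in-time existence of $R\in C([0,\tau];H^4(\TT;\sS_0^{(3)}))$ is standard, since \eqref{nondimn:R10app} is a linear transport equation plus a polynomial zeroth-order term in $R$ with coefficients built from $\nabla\mathbf{v}\in L^\infty(0,T;H^4)$, and $H^4(\TT)$ is a Banach algebra (one may close a Picard iteration in $C([0,\tau];H^4)$, or construct $R$ by Friedrichs mollification and pass to the limit). To extend to $[0,T]$ it suffices to bound $\|R(t)\|_{H^4}$. Applying $\partial_x^\alpha$ for $|\alpha|\le4$ to \eqref{nondimn:R10app}, pairing in $L^2$ with $\partial_x^\alpha R$ and summing, the term $\int(\mathbf{v}\cdot\nabla\partial_x^\alpha R):\partial_x^\alpha R$ vanishes by $\nabla\cdot\mathbf{v}=0$; the commutator estimate \eqref{est:commutator} handles $[\partial_x^\alpha,\mathbf{v}\cdot\nabla]R$, the product estimate \eqref{est:products} handles the bilinear and trilinear $S$-terms and the polynomial reaction terms, and one uses $\|\nabla R\|_{L^\infty}\lesssim\|R\|_{H^4}$ (Sobolev on $\TT$) together with the bound $\|R\|_{L^\infty}\le\Lambda_T^{1/2}$ of Step 2. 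This gives $\tfrac12\tfrac{d}{dt}\|R\|_{H^4}^2\le C_1(t)\|R\|_{H^4}^2+C_2(t)$ with $C_1,C_2\in L^\infty(0,T)$ depending only on $\Lambda_T$, $M_T$, $|\xi|$, $a$, $b$, $c$; Grönwall's lemma then yields $\|R\|_{L^\infty(0,T;H^4)}<\infty$, and a continuation argument produces the global solution $(\mathbf{v},R)\in L^\infty(0,T;H^5\times H^4)$ for every $T>0$.

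The routine part is the product/commutator bookkeeping of Step 3. The one point that requires genuine care is Step 2: one must verify that the $\xi$-dependent advection-type term $S(\nabla\mathbf{v},R):R$ produces at worst a $|\xi|\,|\nabla\mathbf{v}|\,|R|^3$ contribution, which can be absorbed by the $-c|R|^4$ coming from the bulk nonlinearity — this is exactly where the assumption $c>0$ enters, and it is what supplies the $L^\infty$-bound needed to make the cubic terms in the $H^4$ estimate linear in $\|R\|_{H^4}^2$ with integrable coefficient. No eigenvalue-preservation information is needed here, and the argument works for arbitrary $\xi\in\RR$.
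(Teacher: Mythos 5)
Your proposal is correct and follows essentially the same route as the paper: decouple the $2D$ Euler part (with the third velocity component passively transported), obtain a uniform $L^\infty$ bound on $R$ by absorbing the $\xi$-dependent term and the cubic bulk term into $-c|R|^4$, and then close the $H^4$ estimate via the commutator/product estimates and Gronwall. The only cosmetic difference is that you derive the $L^\infty$ bound pointwise along particle trajectories, whereas the paper runs the same absorption argument through $L^p$ energy estimates and passes to the limit $p\to\infty$.
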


\begin{proof} For the $2D$ Euler system it is well known that if $\mathbf{v}_0\in H^5 (\TT;\RR^2)$ then there exists a global solution $\mathbf{v}$ in the same space, with values in $\mathbb{R}^2$. In our case we take values in $\mathbb{R}^3$, so for the third component indeed we have a transport equation with a Lipschitz velocity (provided by the first two components). Hence, by standard results we also have that the solution stays in the same space as the initial datum.

For the $R$-part, we just provide a priori estimates, leaving the approximation procedure to the interested reader.
Multiplying the equation \eqref{nondimn:R10app} by $R|R|^{p-2}$ ($p\geq 2$) and integrating by parts, we obtain:

\begin{align}
\frac{1}{p}\frac{d}{dt} \|R\|_{L^p}^p\le &\ \  C\|\nabla \mathbf{v}\|_{L^\infty}\|R\|_{L^p}^p+C\|\nabla \mathbf{v}\|_{L^p}\|R\|_{L^p}^{p-1}\non\\
&\ \  - \int_{\TT} \Big(2\xi |R|^p  \textrm{tr}(R\nabla \mathbf{v})+a|R|^p-b\textrm{tr}(R^3)|R|^{p-2}+c|R|^{p+2}\Big)\,dx.
\label{rel:estR0}
\end{align}
On the other hand, noting that $\textrm{tr}(R^3)$ can be expressed in terms of eigenvalues one can check that we have for any $\delta>0$ the estimate: $|\textrm{tr}(R^3)|\le \frac{3\delta}{8}|R|^4+\frac{3}{2\delta}|R|^2$, which implies:
\be
|b\textrm{tr}(R^3)|\le \frac{c}{4}|R|^4+\frac{9b^2}{4c}|R|^2. \non
\ee
Furthermore, we have
\be
2|\xi \textrm{tr}(R\nabla \mathbf{v})|\le \frac{c}{4}|R|^2+\frac{4\xi^2}{c}|\nabla \mathbf{v}|^2.\non
\ee
Using the above two estimates in \eqref{rel:estR0}, we get
\begin{align}
\frac{1}{p}\frac{d}{dt}\|R\|_{L^p}^p &\le C\|\nabla \mathbf{v}\|_{L^\infty}\|R\|_{L^p}^p+C\|\nabla \mathbf{v}\|_{L^p}\|R\|_{L^p}^{p-1}+C\|R\|_{L^p}^p\non\\
&\quad +C\|\nabla \mathbf{v}\|^2_{L^\infty}\|R\|_{L^p}^p-\frac{c}{2}\int_{\TT}|R|^{p+2}\,dx.\non
\end{align}
 Since $c>0$, we deduce that
\be
\frac{d}{dt}\|R\|_{L^p}\le C(\|\nabla \mathbf{v}\|_{L^\infty}^2+1)\|R\|_{L^p}+C\|\nabla \mathbf{v}\|_{L^p}.\non
\ee
Thus, out of the above nequality, using Gronwall's inequality, the a priori estimate on $\mathbf{v}$ and passing to the limit $p\to\infty$ we get
\be\label{est:Rinfty}
R\in L^\infty(0,T; L^\infty(\TT)).
\ee

We take now the scalar product in $H^4$ of  \eqref{nondimn:R10app} with $R$ and obtain
\begin{align}
\frac{1}{2}\frac{d}{dt} \|R\|_{H^4}^2=&-\underbrace{(\mathbf{v}\cdot \nabla R,R)_{H^4}}_{:=I_1}\non\\
&+\underbrace{\Big((\xi D_{\mathbf{v}}+ \Omega_{\mathbf{v}})\big( R+\frac13 \Id
\big)+\big(R+\frac13 \Id \big)(\xi
D_{\mathbf{v}}- \Omega_{\mathbf{v}}) -\frac{2}{3}\xi \tr(R\nabla \mathbf{v})\Id, R\Big)_{H^4}}_{:=I_2}\non\\
&-\underbrace{\big(2\xi R\tr(R\nabla \textbf{v}), R\big)_{H^4}}_{:=I_3}-a\|R\|_{H^4}^2+\underbrace{\left(bR^2-cR\tr(R^2),R\right)_{H^4}}_{:=I_4}\label{est:Rh4}
\end{align}
We estimate each term on the right-hand side separately.
\begin{align*}
|I_1|&\le C\underbrace{(\mathbf{v}\cdot\nabla R,R)_{L^2}}_{=0}+C|(\Delta^2 (\mathbf{v}\cdot\nabla R),\Delta^2 R)_{L^2}|\non\\
&=C|(\Delta^2 (\mathbf{v}\cdot\nabla R)-\mathbf{v}\cdot\nabla \Delta^2 R,\Delta^2 R)_{L^2}|\non\\
&\le C\|\mathbf{v}\|_{H^4}\|R\|_{H^4}^2,
\end{align*}
 where we used the estimate \eqref{est:commutator} in the last inequality.
Then for the second term we use repeatedly the estimate \eqref{est:products} to get
$$
|I_2|\le C\|\mathbf{v}\|_{H^5}\|R\|_{H^4}^2.
$$
Furthermore, for the third term, we have
\begin{align}
|I_3| &\le  C\|R^2\nabla \mathbf{v}\|_{H^4}\|R\|_{H^4}\non\\
&\le C\big(\|R\|_{L^\infty}\|R\nabla \mathbf{v}\|_{H^4}+\|R\nabla \mathbf{v}\|_{L^\infty}\|R\|_{H^4}\big) \|R\|_{H^4}\non\\
&\le C\Big[\|R\|_{L^\infty}\left(\|R\|_{L^\infty}\|\nabla \mathbf{v}\|_{H^4}+\|\nabla \mathbf{v}\|_{L^\infty}\|R\|_{H^4}\right)+\|R\|_{L^\infty}\|\nabla \mathbf{v}\|_{L^\infty}\|R\|_{H^4}\Big]\|R\|_{H^4}\non\\
&\le C\|R\|_{L^\infty}\|\mathbf{v}\|_{H^5}\|R\|_{H^4}^2.\non
\end{align}
A similar arguments lead to:
\be
|I_4|\le C\big(\|R\|_{L^\infty}+\|R\|_{L^\infty}^2\big)\|R\|_{H^4}^2.\non
\ee
Gathering the above estimates and using them in \eqref{est:Rh4}, we obtain
$$
\frac{d}{dt}\|R\|_{H^4}^2\le C\big(1+\|\mathbf{v}\|_{H^5}+\|R\|_{L^\infty}\|\mathbf{v}\|_{H^5}+\|R\|_{L^\infty}+\|R\|_{L^\infty}^2\big)\|R\|_{H^4}^2.\non
$$
Taking into account that $\|\mathbf{v}\|_{H^5}$ and $\|R\|_{L^\infty}$ are controlled a priori in $L^\infty(0,T)$, then using Gronwall's lemma, we deduce the desired a priori control of $R$ in $L^\infty(0,T; H^4)$.

The proof is complete.
\end{proof}

\section{Local ODE dynamics}
\setcounter{equation}{0}
Consider the ODE system
\begin{equation}\label{eq:gradODE}
\left\{
\begin{aligned}
&\frac{d}{dt}Q=-aQ+b\big(Q^2-\frac{1}{3}\tr(Q^2)\Id\big)-cQ\tr(Q^2),\\
&Q|_{t=0}=Q_0(x).
\end{aligned}
\right.
\end{equation}
Then we have
\begin{proposition}\label{prop:localODE}
Let  the coefficients of system \eqref{eq:gradODE} satisfy $a<0$, $b,c>0$. Denote by $$\mathcal{S}_*:=s_+\left\{\left(\mathbf{n}\otimes \mathbf{n}-\frac{1}{3}\Id\right),\,\mathbf{n}\in\mathbb{S}^2\right\}, \quad \text{where}\ \  s_+:=\frac{b+\sqrt{b^2-24ac}}{4c}$$
the set  of stationary points of the equation \eqref{eq:gradODE}.

There exists a neighbourhood $\mathcal{W}$ of $\mathcal{S}_*$, within the set $\sS_0^{(3)}$ of all $Q$-tensors, such that for any $Q_0\in \mathcal{W}$ the ODE system \eqref{eq:gradODE} starting from the initial datum $Q_0$ will evolve in the long time towards $\mathcal{T}[Q_0]:=s_+\displaystyle{\left(\mathbf{n}(Q_0)\otimes \mathbf{n}(Q_0)-\frac{1}{3}\Id\right)}$, with the function $\mathbf{n}:\mathcal{W}\to \mathbb{S}^2$ and $\mathcal{T}:\mathcal{W}\to\mathcal{S}_*$ continuous at all biaxial points in $\mathcal{W}$.
\end{proposition}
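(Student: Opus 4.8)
The plan is to exploit that \eqref{eq:gradODE} is the gradient flow on $\sS_0^{(3)}\cong\R^5$ of the real-analytic, $O(3)$-invariant bulk potential $f_B(Q)=\tfrac a2\tr(Q^2)-\tfrac b3\tr(Q^3)+\tfrac c4\tr^2(Q^2)$, namely $\tfrac{d}{dt}Q=-\nabla f_B(Q)$. The structural fact I would establish first is that, because $\nabla f_B(Q)=aQ-b\big(Q^2-\tfrac13\tr(Q^2)\Id\big)+c\,\tr(Q^2)Q$ is a polynomial in $Q$, it commutes with $Q$; hence the flow is $O(3)$-equivariant and freezes the spectral frame of the initial datum. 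Concretely, if $R_0\in O(3)$ diagonalizes $Q_0$, then $R_0Q(t)R_0^T$ solves the same ODE with diagonal initial datum, and since the ODE restricted to diagonal matrices is closed, $R_0Q(t)R_0^T$ stays diagonal; so one common orthonormal eigenframe serves for all $Q(t)$, $t\ge 0$, while the (possibly repeated) eigenvalues $\lambda_1(t)\ge\lambda_2(t)\ge\lambda_3(t)$ evolve by the closed planar system
\[
\dot\lambda_i=-a\lambda_i+b\big(\lambda_i^2-\tfrac13\textstyle\sum_k\lambda_k^2\big)-c\,\lambda_i\textstyle\sum_k\lambda_k^2,\qquad \textstyle\sum_k\lambda_k=0 .
\]
Moreover $\tfrac{d}{dt}(\lambda_i-\lambda_j)=(\lambda_i-\lambda_j)\big[-a+b(\lambda_i+\lambda_j)-c\sum_k\lambda_k^2\big]$ is linear and homogeneous in $\lambda_i-\lambda_j$, so the Weyl chamber $\{\lambda_1\ge\lambda_2\ge\lambda_3\}$ is invariant and a biaxial datum stays biaxial for all finite time; in particular the eigenframe of such a datum is unique and constant in $t$.

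Next I would analyse the planar system near the point $P_*=(\tfrac23 s_+,-\tfrac13 s_+,-\tfrac13 s_+)$, which is the eigenvalue triple of every element of $\mathcal{S}_*$. Since $a<0$, $b,c>0$, the point $P_*$ is the global minimum of $f_B$ and the Hessian of $f_B$ transverse to $\mathcal{S}_*$ is positive definite --- a short explicit computation, equivalently the statement that $f_B$ is Morse--Bott along $\mathcal{S}_*$; note the diagonal slice is transverse to $T_{P_*}\mathcal{S}_*$ (which lies in the off-diagonal part), so $f_B$ restricted to the plane $\{\sum_k\lambda_k=0\}$ has a strict, non-degenerate local minimum at $P_*$. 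Using this restricted $f_B$ as a Lyapunov function together with LaSalle's invariance principle, $P_*$ is asymptotically stable for the planar gradient flow; let $\mathcal{B}$ be an open, forward-invariant neighbourhood of $P_*$ inside the chamber on which every trajectory converges to $P_*$. Define $\mathcal{W}:=\{Q_0\in\sS_0^{(3)}:\ (\lambda_1(Q_0),\lambda_2(Q_0),\lambda_3(Q_0))\in\mathcal{B}\}$, eigenvalues listed in non-increasing order. Because $\mathcal{S}_*$ is exactly the $O(3)$-orbit with eigenvalue triple $P_*$, because $\mathrm{dist}(Q,\mathcal{S}_*)$ is comparable to $|(\lambda_1,\lambda_2,\lambda_3)(Q)-P_*|$ (matching sorted eigenvalues gives the nearest point of $\mathcal{S}_*$), and because eigenvalues depend continuously on $Q$, the set $\mathcal{W}$ is an open neighbourhood of $\mathcal{S}_*$ in $\sS_0^{(3)}$.

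It then remains to transfer the conclusion and check continuity. For $Q_0\in\mathcal{W}$ the eigenvalue triple converges to $P_*$, so in the frozen eigenframe $Q(t)\to s_+\big(\mathbf{n}\otimes\mathbf{n}-\tfrac13\Id\big)$, where $\mathbf{n}\in\mathbb{S}^2$ is a unit eigenvector of $Q_0$ for its largest eigenvalue --- uniquely determined up to sign when $Q_0$ is biaxial, since then $\lambda_1(Q_0)$ is simple. This defines $\mathbf{n}:\mathcal{W}\to\mathbb{S}^2$ (valued in $\mathbb{S}^2/\{\pm\}$) and $\mathcal{T}[Q_0]:=s_+\big(\mathbf{n}(Q_0)\otimes\mathbf{n}(Q_0)-\tfrac13\Id\big)=\lim_{t\to\infty}Q(t;Q_0)$. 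At a biaxial point $Q_0$ the leading eigenprojection of a symmetric matrix is analytic (Kato's perturbation theory of isolated simple eigenvalues), so $\mathbf{n}$ is continuous there, hence so is $\mathcal{T}$; since the map $\mathcal{S}_*\to\mathbb{S}^2/\{\pm\}$, $s_+(\mathbf{m}\otimes\mathbf{m}-\tfrac13\Id)\mapsto\mathbf{m}$, is a homeomorphism, continuity of $\mathbf{n}$ and of $\mathcal{T}$ are equivalent.

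The main obstacle is the finite but coefficient-sensitive verification in the second step: showing that, for all admissible $a<0$, $b,c>0$, the point $P_*$ is a non-degenerate minimum of $f_B|_{\{\sum_k\lambda_k=0\}}$ (equivalently an attracting node of the planar field) and that the transverse Hessian of $f_B$ along $\mathcal{S}_*$ is positive definite; this is where the hypotheses on the parameters are genuinely used, and one must also check that the second uniaxial branch $s_-$ and the isotropic point $0$ do not interfere with the chosen basin $\mathcal{B}$. The only other point requiring care is the bookkeeping ensuring $\mathcal{W}$ --- built from an eigenvalue condition --- is open \emph{in $\sS_0^{(3)}$} and contains the whole manifold $\mathcal{S}_*$ (not merely a neighbourhood of a single diagonal point), handled by the comparison between $\mathrm{dist}(\cdot,\mathcal{S}_*)$ and the eigenvalue distance together with the invariance of the Weyl chamber under the flow. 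If one wishes to avoid the explicit planar analysis, an alternative is to note that $\mathcal{S}_*$ is a compact manifold of minima along which $f_B$ is Morse--Bott, hence a normally hyperbolic attracting invariant manifold whose stable foliation yields $\mathcal{W}$ and a continuous projection $\mathcal{T}$, with convergence to a single point guaranteed by the Lojasiewicz--Simon inequality for the analytic $f_B$; but the eigenframe-freezing argument is both simpler and gives the explicit description of $\mathbf{n}$.
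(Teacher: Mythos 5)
Your proposal follows essentially the same route as the paper's proof: freeze the eigenframe so the dynamics reduces to the closed planar eigenvalue ODE, establish local asymptotic stability of the eigenvalue triple $\left(\tfrac{2s_+}{3},-\tfrac{s_+}{3},-\tfrac{s_+}{3}\right)$, define $\mathcal{W}$ as the preimage of the basin under the continuous eigenvalue map (so it is an open neighbourhood of the whole orbit $\mathcal{S}_*$), and deduce continuity of $\mathbf{n}$ and $\mathcal{T}$ at biaxial points from the simplicity of the leading eigenvalue. The one step you defer as ``the main obstacle'' is done in the paper by a direct linearization of the planar system at $\hat X=(-\tfrac{s_+}{3},-\tfrac{s_+}{3})$, whose Hessian eigenvalues $-a-2bs_+-18cs_+^2$ and $-a+2bs_+-6cs_+^2$ are both negative for $a<0$, $b,c>0$ (using $3a=bs_+-2cs_+^2$), so your Lyapunov/Morse--Bott variant of that step indeed goes through.
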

\begin{proof}
We start by recalling the argument from \cite{IXZ14}  that the dynamics of the ODE system \eqref{eq:gradODE}
 affects only the eigenvalues, but \emph{not} the eigenvectors.  Indeed, let us consider ``the system of eigenvalues":
\begin{equation}\label{eigensystem}
\left\{
\begin{aligned}
\frac{d\lambda_1}{dt}&=-\lambda_1\big[2c(\lambda_1^2+\lambda_2^2+\lambda_1\lambda_2)+a\big]+b\Big(\frac{\lambda_1^2}{3}
-\frac{2}{3}\lambda_2^2-\frac{2}{3}\lambda_1\lambda_2\Big),\\
\frac{d\lambda_2}{dt}&=-\lambda_2\big[2c(\lambda_1^2+\lambda_2^2+\lambda_1\lambda_2)+a\big]
+b\Big(\frac{\lambda_2^2}{3}-\frac{2}{3}\lambda_1^2-\frac{2}{3}\lambda_1\lambda_2\Big).
\end{aligned}
\right.
\end{equation}
 Using standard arguments it can be shown that the system \eqref{eigensystem} has a global-in-time solution $(\lambda_1, \lambda_2)$.

For an arbitrary initial datum $\tilde Q_0$, since
$\tilde Q_0$ is a symmetric matrix, there exists a matrix $R\in
O(3)$, such that
$
  R\tilde
  Q_0R^t=\diag (\tilde\lambda_1^0, \tilde\lambda_2^0,
   -\tilde\lambda_1^0-\tilde\lambda_2^0),
$
where $(\tilde \lambda_1^0,\tilde
\lambda_2^0,-\tilde\lambda_1^0-\tilde\lambda_2^0)$ are the
eigenvalues of $\tilde Q_0$. It can be shown  (see \cite{IXZ14} for details)
 that
 $$
  Q(t)=R^t\diag (\lambda_1(t) , \lambda_2(t) ,
  -\lambda_1(t)-\lambda_2(t))R.
$$
with $(\lambda_1(t),\lambda_2(t))$ being the solution of the ODE system \eqref{eigensystem}
 subject to the initial data $(\tilde\lambda_1^0,\tilde\lambda_2^0)$.

Therefore, for $Q_0$ close to $\mathcal{S}_*$, we have (taking into account that the eigenvalues are a continuous function of the matrix, see for instance \cite{nomizu}) that its eigenvalues are close to $-\frac{s_+}{3}$, $-\frac{s_+}{3}$ and $\frac{2s_+}{3}$, respectively. Thus, there exists a matrix $R\in
O(3)$, such that
$
  R
  Q_0R^t=\diag (\lambda_1^0, \lambda_2^0,
   -\lambda_1^0-\lambda_2^0),
$
where $(\lambda_1^0,
\lambda_2^0)-\frac{1}{3}(s_+,s_+)$ is small.

Denote $\hat X =(-\frac{s_+}{3},-\frac{s_+}{3})$.  A straightforward calculation shows that the Hessian matrix at $\hat X$ is  given by
\be
\left(
\begin{array}{ll} -a-12cs_+^2 & -2bs_+-6cs_+^2 \\   -2bs_+-6cs_+^2 &  -a-12cs_+^2\end{array}
\right).\non
\ee
Its eigenvalues take the following form
\be
\lambda_1=-a-2bs_+-18cs_+^2,\quad  \lambda_2=-a+2bs_+-6cs_+^2,\non
\ee
which can be checked to be negative under the assumption on $a,b$ and $c$. Thus, the standard ODE theory (see for instance,\cite{GHbook}) implies that for initial data in a neighbourhood of $\hat X$, we have that in the long time the solution will converge exponentially to $\hat X$. Let us denote by $\mathcal{U}_{\hat X}$ this neighbourhood.

We denote $$\mathcal{U}:=\{R\diag (\lambda,\mu,-\lambda-\mu)R^t,\ \  R\in O(3),\ \  (\lambda,\mu)\in \mathcal{U}_{\hat X}\},$$
which is a neighbourhood of $\mathcal{S}_*$.
We can see that the solution of the ODE system \eqref{eq:gradODE} starting from $Q_0\in \mathcal{U}$ will evolve in the long time to $\displaystyle{R(Q_0)\diag\Big(-\frac{s_+}{3},-\frac{s_+}{3},\frac{2s_+}{3}\Big)R(Q_0)^T}$ where $R(Q_0)$ is the matrix that diagonalizes $Q_0$ (namely, such that  $R(Q_0)
  Q_0R(Q_0)^T$ is diagonal). It can be shown that  $R(Q_0)$ is a matrix made of the eigenvectors of $Q_0$. Taking into account that a biaxial matrix has distinct eigenvalues and one can choose a smooth basis of eigenvectors near such a matrix (see for instance, \cite{nomizu}), we deduce that the function $R(Q_0)$ can be chosen in a continuous manner in a neighbourhood of any $Q_0$ biaxial. This proves the final claim of the Proposition.
\end{proof}

\end{appendix}

\section*{Acknowledgements}
 H. Wu is partially
supported by NNSFC grant No. 11631011. X. Xu is supported by the
start-up fund from the Department of Mathematics and Statistics at
Old Dominion University. A.Zarnescu is partially supported by a
Grant of the Romanian National Authority for Scientific Research and
Innovation, CNCS-UEFISCDI, project number PN-II-RU-TE-2014-4-0657;
by the Basque Government through the BERC 2014-2017 program; and by
the Spanish Ministry of Economy and Competitiveness MINECO: BCAM
Severo Ochoa accreditation SEV-2013-0323.


\end{document}